\documentclass[10pt,letterpaper]{article}
\usepackage[T1]{fontenc}
\usepackage[utf8]{inputenc}
\usepackage[left=2.5cm,right=2.5cm,top=2.5cm,bottom=2cm]{geometry}
\usepackage{times}
\usepackage{microtype}
\usepackage{amsmath,amssymb,amsthm,mathtools,bm}
\usepackage[cal=boondox]{mathalfa}
\usepackage{setspace}
\usepackage{booktabs,tabularx,array,enumitem}
\usepackage[numbers,sort&compress]{natbib}
\usepackage{xurl}
\usepackage{xcolor,etoolbox}
\usepackage[explicit]{titlesec}
\usepackage{needspace}
\usepackage{hyperref}
\hypersetup{colorlinks=true,linkcolor=magenta,citecolor=green,urlcolor=blue,
 pdftitle={Conditional Fluctuations of Mean-Field Systems with Stable Common Noise},
 pdfauthor={Alexandre Autran}}
\allowdisplaybreaks[2]
\titleformat{\section}{\normalfont\normalsize\bfseries}{\thesection.}{.5em}{#1.}
\titlespacing*{\section}{0pt}{2ex plus .5ex minus .2ex}{1.25ex plus .2ex}
\titleformat{\subsection}[runin]{\normalfont\normalsize\bfseries}{}{0pt}{#1.}
\titlespacing*{\subsection}{0pt}{2ex plus .4ex minus .2ex}{.5em}
\pretocmd{\subsection}{\Needspace{6\baselineskip}}{}{}
\newtheoremstyle{noteplain}{7pt}{0pt}{\itshape}{}{\bfseries}{.}{.5em}
 {\thmname{#1}\thmnumber{ #2}\thmnote{\normalfont\ (#3)}}
\newtheoremstyle{notedefinition}{7pt}{0pt}{\normalfont}{}{\bfseries}{.}{.5em}
 {\thmname{#1}\thmnumber{ #2}\thmnote{\normalfont\ (#3)}}
\theoremstyle{noteplain}
\newtheorem{theorem}{Theorem}
\newtheorem{proposition}[theorem]{Proposition}
\newtheorem{lemma}[theorem]{Lemma}
\newtheorem{corollary}[theorem]{Corollary}
\theoremstyle{notedefinition}
\newtheorem{assumption}[theorem]{Assumption}
\newtheorem{remark}[theorem]{Remark}
\BeforeBeginEnvironment{theorem}{\Needspace{6\baselineskip}}
\BeforeBeginEnvironment{proposition}{\Needspace{6\baselineskip}}
\BeforeBeginEnvironment{lemma}{\Needspace{6\baselineskip}}
\BeforeBeginEnvironment{corollary}{\Needspace{6\baselineskip}}
\BeforeBeginEnvironment{assumption}{\Needspace{6\baselineskip}}
\BeforeBeginEnvironment{remark}{\Needspace{6\baselineskip}}
\makeatletter
\renewenvironment{proof}[1][\proofname]{\par
 \Needspace{5\baselineskip}\pushQED{\qed}\normalfont\topsep=7pt
 \trivlist\item[\hskip\labelsep\itshape #1\@addpunct{.}]\ignorespaces
}{\popQED\endtrivlist\@endpefalse}
\makeatother
\newcommand{\notetitle}[2]{\begingroup\setlength{\parskip}{0pt}
 {\centering\Large\bfseries #1\par}
 \vspace{11pt}
 {\centering\large #2\par}
 \vspace{27pt}
 {\centering\small\bfseries Abstract\par}
 \vspace{6.5pt}\endgroup}
\newenvironment{noteabstract}{
 \begingroup\small\setlength{\leftskip}{25pt}\setlength{\rightskip}{25pt}
 \setlength{\parindent}{13.5pt}\setlength{\parskip}{0pt}\indent\ignorespaces
}{\par\endgroup\vspace{3pt}}

\newcolumntype{Y}{>{\raggedright\arraybackslash}X}
\newcommand{\N}{\mathbf N}
\newcommand{\Z}{\mathbf Z}
\newcommand{\R}{\mathbf R}
\newcommand{\C}{\mathbf C}
\newcommand{\Pp}{\mathbf P}
\newcommand{\Ee}{\mathbf E}
\newcommand{\Law}{\mathcal L}
\newcommand{\Borel}{\mathcal B}
\newcommand{\Prob}{\mathcal P}
\newcommand{\cF}{\mathcal F}
\newcommand{\cG}{\mathcal G}

\newcommand{\dd}{\,\mathrm d}
\newcommand{\ip}[2]{\left\langle #1,#2\right\rangle}

\newcommand{\ind}{\mathbf 1}
\newcommand{\trans}[2]{\tau_{#1}#2}
\newcommand{\weak}{\xRightarrow[N\to\infty]{}}
\providecommand{\doi}[1]{\href{https://doi.org/#1}{\mbox{\nolinkurl{#1}}}}

\begin{document}
\pagestyle{plain}
\notetitle{Conditional Fluctuations of Mean-Field Systems\\with Stable Common Noise}{Alexandre Autran\footnote{Department of Mathematics, École Normale Supérieure de Rennes, France. Email: \href{mailto:alexandre.autran@ens-rennes.fr}{\texttt{alexandre.autran@ens-rennes.fr}}.}}
\begin{noteabstract}
We study conditional fluctuations of smooth mean-field systems on a torus
with endogenous symmetric stable common jumps of index $0<\alpha<1$.
A positive-strip estimate makes empirical Poisson acceptance errors
negligible at the sampling scale. The conditional laws given the full
Poisson master converge in probability to a centered Gaussian kernel in
a negative Sobolev path space. In dimension one, quantile coupling
preserves this limit for microscopic marks with relative tail remainder
$O(x^{-\rho})$, $\rho>\alpha/2$. A quantified second-order tail expansion
gives the endogenous rank response: it shifts the conditional Gaussian
mean at criticality and, when nonzero, dominates sampling below
criticality. We also identify a separate stable-to-Brownian transition
for the rank error. Finally, in a translation model, rank and clock couplings give the same
microscopic laws. The rank coupling has
a functional Gaussian limit. The clock coupling has no diverging
additive path normalization at the limiting center.
\end{noteabstract}
\section{Introduction}\label{sec:introduction}
Heavy-tailed collective jumps can produce stable common noise in mean-field
limits. Empirical fluctuations combine sampling and common-signal errors.
Their scales and path limits depend on the coupling with the limiting
signal. We study two couplings: matching Poisson ranks and changing the
clock of a stable path. The main results are a conditional functional
central limit theorem under empirical Poisson thinning
(Theorem~\ref{mf:clt}), an endogenous rank-response expansion
(Theorem~\ref{cr:theorem}), and a model with identical microscopic laws
but different additive path fluctuations under the two couplings
(Proposition~\ref{clk:same-law}).
\subsection{Sampling and common-signal errors}
On the torus $\mathbf T^d:=\R^d/\Z^d$, consider
\begin{equation}\label{intro:model}
 \dd X_t^{i,N}=B(X_t^{i,N},m(\mu_t^N))\dd t
       +\sigma\dd W_t^i+\dd L_t^N,
 \qquad \mu_t^N:=\frac1N\sum_{i=1}^N\delta_{X_t^{i,N}},
\end{equation}
where $W^i$ are independent private Brownian motions and
$m(\mu)=(\mu(g_1),\ldots,\mu(g_q))$ is the moment vector.
A stable Poisson master $\mathcal N^0$ supplies the common jumps:
a candidate $(t,z,u)$ is accepted when $u\le\mu_{t-}^N(f)$.
The reference signal $L$ uses the threshold $\mu_{t-}(f)$, where
$\mu_t:=\Law(\bar X_t\mid\cG_t^0)$ and $\cG_t^0$ records all candidates.
Section~\ref{mf:section} specifies the coefficients and assumptions.
Let $\widetilde\mu^N$ retain its empirical drift and use $L$. Then
\begin{equation}\label{intro:split}
 \mu^N-\mu=(\widetilde\mu^N-\mu)+(\mu^N-\widetilde\mu^N).
\end{equation}
The two terms are sampling and acceptance errors. Set
\[
 A_t^N:=\int_0^t|\mu_r^N(f)-\mu_r(f)|\dd r,
 \qquad V_t^N:=\sum_{r\le t}|\Delta L_r^N-\Delta L_r|.
\]
Predictable compensation and synchronous stability give, for stopping
times $\tau\le T$ and $x,v>0$,
\[
 \Pp(V_\tau^N>x,\ A_\tau^N\le v)\le Cvx^{-\alpha},
 \qquad A_T^N=O_{\Pp}(N^{-1/2}),
\]
\[
 V_T^N=O_{\Pp}(N^{-1/(2\alpha)})=o_{\Pp}(N^{-1/2}),
 \qquad 0<\alpha<1.
\]
Thus acceptance errors vanish at the sampling scale. The acceptance
rate determines the reference law, jumps and covariance, but its
derivative adds no fluctuation term under this coupling.
\subsection{Microscopic tails and the critical scale}
In dimension one, common ranks couple rate-$N$ marks
$N^{-1/\alpha}U$ to stable jumps. For symmetric $U$ with
\[
 \Pp(|U|>x)=cx^{-\alpha}\{1+O(x^{-\rho})\},\qquad c>0,
\]
the conditions $0<\alpha<1$ and $\rho>\alpha/2$ preserve the
sampling CLT (Theorem~\ref{rk:transfer}). Symmetry controls small-jump
errors without assuming that $U$ has a density or a finite second moment.
Under the quantified expansion of Lemma~\ref{rk:hall-sufficient}
with nonzero second-order coefficient, Theorem~\ref{rk:tangent}
gives a stable rank tangent of index $\kappa:=\alpha/(1-\rho)$
for $\rho<1-\alpha/2$. It shares the leading signal's Poisson atoms.
At and above $1-\alpha/2$, the tangent is independent Brownian noise,
with a logarithmic normalization at equality. Above the boundary its
variance depends on the full quantile.
\begin{table}[tb]
\centering
\caption{The two thresholds for the rank error.}
\label{tab:boundaries}
\small
\begin{tabularx}{\linewidth}{@{}Y l Y@{}}
\toprule
Comparison & Boundary & Criterion\\
\midrule
Rank error versus sampling & $\rho=\alpha/2$, $\alpha<1$
 & $N^{-\rho/\alpha}=N^{-1/2}$.\\
Stable versus Brownian rank tangent & $\rho=1-\alpha/2$
 & Square integrability of the limiting quantile correction at large ranks.\\
\bottomrule
\end{tabularx}
\end{table}
Since $\alpha/2<1-\alpha/2$ for $\alpha<1$, the sampling threshold
lies in the stable rank regime. Theorem~\ref{cr:theorem} proves
\[
 \mu^{N,\mathrm{mic}}-\mu
 =N^{-1/2}\widetilde\eta^N+N^{-\rho/\alpha}\chi
                         +o_{\Pp}(N^{-\rho/\alpha}),
 \qquad 0<\rho\le\alpha/2,
\]
uniformly in the negative Sobolev norm, where
$\widetilde\eta^N:=\sqrt N(\widetilde\mu^N-\mu)$ and $\chi$
is the linear response to the rank forcing accepted by the reference
threshold. This master-measurable forcing need not be L\'evy.
At criticality, the conditional Gaussian mean is $\chi$. Below
criticality, a nonzero $\chi$ determines the leading scale.
Corollary~\ref{rk:sharpness} computes $\chi$ in a translation model
and proves sharpness for the model class. A translation-invariant
spatial law can instead give $\chi=0$.
\subsection{Clock effects}
Section~\ref{clk:section} treats a finite-state model with
$L^N=S_{K^N/N}$, where $K^N$ counts type transitions and $S$ is
an independent symmetric $\alpha$-stable L\'evy process. A joint
population--clock CLT gives the finite-dimensional signal correction.
Sampling and clock errors are of the same order at $\alpha=1$. Their
dependence is determined by the Gaussian population--clock
cross-covariance. Displaced jumps prevent tightness under every
diverging additive path normalization at the limiting center.
Centering at the finite-system signal restores a functional Gaussian
limit. Proposition~\ref{clk:same-law} compares rank and clock
couplings of the same microscopic law in a constant-rate translation
model. The clock conclusions apply to these models. The endogenous
model of Section~\ref{mf:section} is treated through rank coupling.
\subsection{Related work}
Stable common-noise limits and conditional propagation of chaos are
established in \citet{LocherbachLoukianova2025Positive,
LocherbachLoukianovaMarini2025Strong,LocherbachLoukianovaMarini2025Weak}.
The strong result gives a quantitative coupling
\citep[Theorem~2.14]{LocherbachLoukianovaMarini2025Strong}. The weak
result uses a time-changed random walk. Here we normalize the empirical
residual on a specified joint realization, with compact state space,
smooth cylindrical drift, additive translations, $\alpha<1$ and no
source reset. Corollary~\ref{rk:exclusion} covers source exclusion.
Conditional Gaussian fluctuations with a common factor are studied in
\citet{BudhirajaKiraSaha2017,DelarueLackerRamanan2019,Nikolaev2026}.
\citet{BudhirajaKiraSaha2017} permits empirical feedback in that
factor with a common jump intensity that does not depend on the empirical measure
\citep[Condition~2.3]{BudhirajaKiraSaha2017}, finite mark measures,
moment conditions and nondegenerate private diffusion. Our selection
has infinite stable activity and allows degenerate private diffusion.
\citet{Nikolaev2026} treats Brownian common noise with less regular
interactions. We use the Hilbert-space sampling method of
\citet{FernandezMeleard1997,KurtzXiong2004}, then control acceptance
errors by a positive-strip estimate and microscopic errors by quantile
estimates. Both steps preserve convergence in probability of the
conditional laws given the full master.
Rank representations and stable invariance principles are developed in
\citet{FergusonKlass1972,LePageWoodroofeZinn1981,
LePagePodgorskiRyznar1997,Rosinski1990}, and Gaussian small-jump
approximations in \citet{AsmussenRosinski2001}. Our rank tangent
describes the residual of a fixed quantile coupling jointly with its
leading signal. The clock analysis combines martingale CLTs and
time-change theory \citep{EthierKurtz1986,Rebolledo1980,Whitt2002}
to identify additive second-order errors and the obstruction to their
path convergence.
\subsection{Notation}
Each construction is defined on a probability space
$(\Omega,\cF,\Pp)$. We write $\Ee$ for expectation, $\Law(X)$
for the law of $X$, and $\ind_A$ for the indicator of an event or set
$A$. For a Polish space $E$, $\Borel(E)$ is its Borel sigma-field,
$\Prob(E)$ the Borel probability measures on $E$, and $\delta_x$
the Dirac mass at $x\in E$. If $\phi$ is integrable,
$\mu(\phi):=\int_E\phi\dd\mu$.
Write $\N_0:=\{0,1,\ldots\}$, $\N:=\N_0\setminus\{0\}$,
and let $\Z$, $\R$, $\C$ denote the integers, reals and complex
numbers. On a Euclidean domain or torus $U$, $C^k(U)$ consists of
functions with continuous derivatives through order $k$.
$C_b^k(U)$ requires all of them, including order zero, to be bounded.
Vector-valued spaces are defined componentwise, and $C^\infty(U)$
denotes smooth functions. For a multi-index
$\gamma\in\N_0^{\dim U}$, set
\[
 |\gamma|:=\sum_j\gamma_j,\qquad
 \partial^\gamma:=\prod_j\partial_j^{\gamma_j},\qquad
 \|\phi\|_{C^k}:=\max_{|\gamma|\le k}\sup_{x\in U}|\partial^\gamma\phi(x)|.
\]
For a normed space $E$ and $T>0$, $C([0,T];E)$ and $D([0,T];E)$
are the continuous and c\`adl\`ag path spaces. We use
\[
 \|x\|_{\infty,E}:=\sup_{t\le T}\|x_t\|_E,
 \qquad
 \|X\|_{L^p(\Pp;E)}:=\bigl(\Ee\left[\|X\|_E^p\right]\bigr)^{1/p},
 \quad 1\le p<\infty,
\]
omitting subscripts when the space is clear. The Skorokhod $J_1$
topology satisfies
\[
 x_n\longrightarrow x
 \quad\Longleftrightarrow\quad
 \exists\lambda_n\in\Lambda_T:\quad
 \|\lambda_n-\mathrm{id}\|_\infty+
 \|x_n\circ\lambda_n-x\|_{\infty,E}\longrightarrow0,
\]
where $\Lambda_T$ is the set of increasing homeomorphisms of $[0,T]$.
Vector $J_1$ uses one time change for all coordinates. For a c\`adl\`ag
path, $\Delta x_t:=x_t-x_{t-}$.
We write $\Longrightarrow$ for convergence in law. For deterministic
$b_N>0$, $X_N=O_{\Pp}(b_N)$ means that $\|X_N\|/b_N$ is tight.
$X_N=o_{\Pp}(b_N)$ means that it converges to zero in probability.
Constants $C$ can change between lines, are independent of $N$, and
carry subscripts for relevant dependencies.
\section{A conditional central limit theorem under Poisson thinning}
\label{mf:section}
\subsection{Model, conditioning, and statement}
Fix $T<\infty$, integers $d,d_0,q\ge1$, and $\bar f>0$.
Equip $\mathbf T^d:=\R^d/\Z^d$ with its geodesic distance
$d_{\mathbf T^d}$. For $\phi:\mathbf T^d\to\R$, put
\[
 \|\phi\|_{\mathrm{Lip}}
 :=\sup_{x\ne y}\frac{|\phi(x)-\phi(y)|}{d_{\mathbf T^d}(x,y)},
 \qquad
 W_1(\mu,\nu):=\sup_{\|\phi\|_{\mathrm{Lip}}\le1}(\mu-\nu)(\phi),
 \quad \mu,\nu\in\Prob(\mathbf T^d).
\]
Let $\sigma\in\R^{d\times d_0}$ and $a:=\sigma\sigma^\top$.
$a$ may be singular. Define
\[
 m:\left|\begin{aligned}
 \Prob(\mathbf T^d)&\longrightarrow \R^q\\
 \mu&\longmapsto (\mu(g_1),\ldots,\mu(g_q)),
 \end{aligned}\right.
\qquad
 b:\left|\begin{aligned}
 \mathbf T^d\times\Prob(\mathbf T^d)&\longrightarrow \R^d\\
 (x,\mu)&\longmapsto B(x,m(\mu)).
 \end{aligned}\right.
\]
\begin{assumption}[Smooth coefficients]\label{mf:assumptions}
Fix $s>d/2+3$. The function
$B:\mathbf T^d\times\R^q\to\R^d$ and its derivatives through
order $\lceil s\rceil+2$ are bounded. Moreover,
$g_1,\ldots,g_q,f\in C^{\lceil s\rceil+2}(\mathbf T^d)$ and
$0\le f\le\bar f$.
\end{assumption}
Coefficients given near the compact set $m(\Prob(\mathbf T^d))$
can be extended with these bounds. Fix $0<\alpha<1$ and
$c_{d,\alpha}>0$, and set
\[
 \nu_\alpha(\dd z):=c_{d,\alpha}|z|^{-d-\alpha}\dd z,
 \qquad z\in\R^d\setminus\{0\}.
\]
Let $\mathcal N^0$ be a Poisson random measure on
$[0,\infty)\times(\R^d\setminus\{0\})\times[0,\bar f]$
with intensity $\dd t\,\nu_\alpha(\dd z)\dd u$.
Its completed right-continuous natural filtration is
$(\cG_t^0)_{t\ge0}$. It records every candidate, including rejected
ones. Let $(X_0^i)_{i\ge1}$ be i.i.d. with law
$\mu_0\in\Prob(\mathbf T^d)$, and let $(W^i)_{i\ge1}$ be independent
standard $\R^{d_0}$-valued Brownian motions. The sample, Brownian
motions, and master $\mathcal N^0$ are mutually independent.
The working filtration $(\cF_t)_{t\ge0}$ is their usual augmented
joint filtration and preserves the Poisson compensator.
The particle system, with addition modulo $\Z^d$, is
\begin{align}
 X_t^{i,N}
 &=X_0^i+\int_0^t B(X_r^{i,N},m(\mu_r^N))\dd r
   +\sigma W_t^i+L_t^N,\qquad
 \mu_t^N:=\frac1N\sum_{i=1}^N\delta_{X_t^{i,N}},
 \label{mf:model}\\
 L_t^N
 &=\int_{[0,t]\times\R^d\times[0,\bar f]}
 z\ind_{\{u\le\mu_{r-}^N(f)\}}\mathcal N^0(\dd r,\dd z,\dd u).
 \notag
\end{align}
All mark integrals exclude $0$. Since
$\int_{|z|\le1}|z|\nu_\alpha(\dd z)<\infty$ and
$\nu_\alpha(|z|>1)<\infty$, they are uncompensated finite-variation
sums on $[0,T]$. For an independent copy $(\bar X_0,W)$ of the private
input, the reference equation is
\begin{align}
 \bar X_t
 &=\bar X_0+\int_0^t B(\bar X_r,m(\mu_r))\dd r
   +\sigma W_t+L_t,\qquad
 \mu_t=\Law(\bar X_t\mid\cG_t^0),
 \label{mf:reference}\\
 L_t
 &=\int_{[0,t]\times\R^d\times[0,\bar f]}
 z\ind_{\{u\le\mu_{r-}(f)\}}\mathcal N^0(\dd r,\dd z,\dd u).
 \notag
\end{align}
The private copy is included in $\cF_t$ when needed.
Proposition~\ref{mf:wellposed} constructs both systems. The signals
$L^N,L$ take values in $\R^d$, and particle states in $\mathbf T^d$.
For $\beta\in\R$, let $\mathcal D'(\mathbf T^d)$ be the space of
real distributions and define
\[
 H^\beta:=\left\{\xi\in\mathcal D'(\mathbf T^d):
 \|\xi\|_{H^\beta}^2
 :=\sum_{k\in\Z^d}(1+|k|^2)^\beta|\xi(e_k)|^2<\infty\right\},
 \qquad
 e_k:\left|\begin{aligned}
 \mathbf T^d&\longrightarrow \C\\
 x&\longmapsto e^{2\pi i k\cdot x}.
 \end{aligned}\right.
\]
Fourier coefficients use complexification. The spaces and fields are
real. We write $\xi(\phi)$ for the $H^{-\beta}$--$H^\beta$ pairing.
The moment map extends linearly to
\[
 m:\left|\begin{aligned}
 H^{-s}&\longrightarrow \R^q\\
 \xi&\longmapsto (\xi(g_1),\ldots,\xi(g_q)).
 \end{aligned}\right.
\]
For $z\in\R^d$, translation is the isometry
\[
 \tau_z:\left|\begin{aligned}
 H^\beta&\longrightarrow H^\beta\\
 \xi&\longmapsto [\phi\longmapsto\xi(\phi(\cdot+z))].
 \end{aligned}\right.
\]
The embedding $H^s\hookrightarrow C^1$ yields
\begin{equation}\label{mf:dirac-bound}
 \|\delta_x-\delta_y\|_{H^{-s}}
 \le C_s d_{\mathbf T^d}(x,y),\qquad
 \|\mu-\widetilde\mu\|_{H^{-s}}
 \le C_sW_1(\mu,\widetilde\mu),\quad s>d/2+1.
\end{equation}
Indeed, $|\phi(x)-\phi(y)|\le C_s\|\phi\|_{H^s}d_{\mathbf T^d}(x,y)$,
and the second bound follows by integrating against a coupling.
For a Polish space $E$ with bounded compatible metric $d_E$, define
\[
 d_{\mathrm{BL}}(P,Q):=
 \sup_{\|F\|_\infty+\operatorname{Lip}_{d_E}(F)\le1}|P(F)-Q(F)|,
 \qquad P,Q\in\Prob(E).
\]
If conditional kernels $K_N,K$ given $\cG_T^0$ satisfy
$d_{\mathrm{BL}}(K_N,K)\to0$ in probability, then, for bounded
$\cG_T^0$-measurable $Z$ and bounded Lipschitz $F$,
\[
 \Ee\left[ZK_N(F)\right]\longrightarrow\Ee\left[ZK(F)\right],
\]
which gives stable convergence \citep{AldousEagleson1978}.
For $\mu\in\Prob(\mathbf T^d)$, define
\[
 \mathcal A_\mu:\left|\begin{aligned}
 H^s&\longrightarrow H^{s-2}\\
 \phi&\longmapsto B(\cdot,m(\mu))\cdot\nabla\phi
                  +\tfrac12a:D^2\phi,
 \end{aligned}\right.
\qquad
 \mathcal B_\mu:\left|\begin{aligned}
 H^s&\longrightarrow H^{s-2}\\
 \phi&\longmapsto \displaystyle\mathcal A_\mu\phi+
 \sum_{j=1}^qg_j\,
 \mu\bigl(\partial_{m_j}B(\cdot,m(\mu))\cdot\nabla\phi\bigr).
 \end{aligned}\right.
\]
Here $a:D^2\phi:=\sum_{j,k=1}^da_{jk}\partial_{jk}\phi$.
Both operator norms are bounded uniformly in $\mu$.
For $\xi\in H^{-(s-2)}$ and $\phi\in H^s$, the adjoint is
\[
 \mathcal A_\mu^*:\left|\begin{aligned}
 H^{-(s-2)}&\longrightarrow H^{-s}\\
 \xi&\longmapsto [\phi\longmapsto\xi(\mathcal A_\mu\phi)].
 \end{aligned}\right.
\]
For $v\in C^{\lceil s\rceil+1}(\mathbf T^d;\R^d)$ and
$\xi\in H^{-(s-1)}$, define $\nabla\cdot(v\xi)\in H^{-s}$ by
\[
 [\nabla\cdot(v\xi)](\phi):=-\xi(v\cdot\nabla\phi),\qquad \phi\in H^s.
\]
For real continuous martingales, $\ip{M(\phi)}{M(\psi)}$ denotes
predictable covariation.
\begin{theorem}[Conditional functional fluctuations]\label{mf:clt}
Under Assumption~\ref{mf:assumptions}, let $0<\alpha<1$ and set
$\eta^N:=\sqrt N(\mu^N-\mu)$. Then
\begin{equation}\label{mf:stable-clt}
 \eta^N\xRightarrow[N\to\infty]{\mathrm{st}(\cG_T^0)}\eta
 \quad\text{in }D([0,T];H^{-s}).
\end{equation}
The path space carries the Skorokhod $J_1$ topology. Conditionally on
$\cG_T^0$, the limit is centered Gaussian, starts from the
$\mu_0$-Brownian bridge, and satisfies, for $\phi\in C^\infty(\mathbf T^d)$,
\begin{align}
 \eta_t(\phi)
 &=\eta_0(\phi)+M_t(\phi)
   +\int_0^t\eta_r(\mathcal B_{\mu_r}\phi)\dd r
 \label{mf:limit-equation}\\
 &\quad+
 \int_{[0,t]\times\R^d\times[0,\bar f]}
 \ind_{\{u\le\mu_{r-}(f)\}}
 \eta_{r-}\bigl(\phi(\cdot+z)-\phi\bigr)
 \mathcal N^0(\dd r,\dd z,\dd u).
 \notag
\end{align}
Here $M$ is a continuous, conditionally centered Gaussian $H^{-s}$-valued
martingale, conditionally independent of $\eta_0$, with
\begin{equation}\label{mf:bracket}
 \ip{M(\phi)}{M(\psi)}_t
 =\int_0^t\mu_r(\nabla\phi^\top a\nabla\psi)\dd r.
\end{equation}
The jump sum is absolutely convergent for smooth tests. For almost
every master, \eqref{mf:limit-equation} has a unique law among
$H^{-s}$-valued càdlàg solutions with this joint conditional law of
$(\eta_0,M)$.
If $K_N$ and $K$ are the conditional laws of $\eta^N$ and $\eta$ given
$\cG_T^0$, then
\begin{equation}\label{mf:kernel-convergence}
 d_{\mathrm{BL}}(K_N,K)\xrightarrow[N\to\infty]{}0
 \quad\text{in probability}.
\end{equation}
Moreover, let $\widetilde\mu^N$ be the empirical law with its own
empirical drift, the same initial sample and Brownian motions, and the
prescribed reference signal $L$. Then
\begin{equation}\label{mf:signal-rate}
 \sup_{t\le T}|L_t^N-L_t|
 +\sup_{t\le T}W_1(\mu_t^N,\widetilde\mu_t^N)
 =O_{\Pp}(N^{-1/(2\alpha)}).
\end{equation}
\end{theorem}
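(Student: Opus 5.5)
The plan follows the split \eqref{intro:split}: $\eta^N=\widetilde\eta^N+\sqrt N(\mu^N-\widetilde\mu^N)$. First I prove the sampling CLT for $\widetilde\eta^N:=\sqrt N(\widetilde\mu^N-\mu)$. Then I show that the acceptance error is $O_{\Pp}(N^{-1/(2\alpha)})=o_{\Pp}(N^{-1/2})$ uniformly in $H^{-s}$. This second step is exactly \eqref{mf:signal-rate} combined with \eqref{mf:dirac-bound}.

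\textbf{Sampling term.} Fix a realization of the master $\mathcal N^0$. Given $\cG_T^0$, both $\mu$ and $L$ are fixed, so $\widetilde\mu^N$ is an empirical system with exchangeable private randomness driven by a deterministic finite-variation translation path. On the finitely many large jumps, and along the summable small jumps of $L$, $\widetilde\eta^N$ is transported by $\tau_{\Delta L_r}$. Between jumps, Itô's formula applied to $\phi(X^i)$ and to $\mu_t(\phi)$ gives
\[
 \widetilde\eta^N_t(\phi)=\widetilde\eta^N_0(\phi)+M^N_t(\phi)
 +\int_0^t\widetilde\eta^N_r(\mathcal B_{\mu_r}\phi)\dd r+R^N_t(\phi)+\text{(jump term)}.
\]
Here the linearization remainder satisfies $R^N=O(\sqrt N\,|m(\widetilde\mu^N)-m(\mu)|^2)$.

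I then follow \citet{FernandezMeleard1997,KurtzXiong2004}. Work in the Hilbert scale $H^{-s}\subset H^{-s'}$ with $d/2+1<s'<s-2$, which is possible since $s>d/2+3$. Uniform conditional moment bounds $\Ee[\sup_t\|\widetilde\eta^N_t\|_{H^{-s'}}^2\mid\cG_T^0]\le C$ follow from Gronwall. This is where the translation isometry $\tau_z$ is used, so that jumps cost nothing in norm. The Hilbert–Schmidt embedding $H^{-s'}\hookrightarrow H^{-s}$ then gives compact containment. The Aldous criterion, applied with the jump times fixed, gives tightness in $D$. The martingale CLT (Rebolledo) identifies $M$ with bracket \eqref{mf:bracket}, and the initial CLT gives the Brownian bridge for $\eta_0$.

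Uniqueness of \eqref{mf:limit-equation} for a fixed master comes from linearity. Between jumps, the difference of two solutions solves a linear equation with bounded generator $\mathcal B_{\mu_r}^*$ from $H^{-(s-2)}$ to $H^{-s}$. I would close this by an energy estimate in the weaker norm $H^{-s}$ using duality with a backward Kolmogorov equation, for which the smoothness order $\lceil s\rceil+2$ is used. The jump term is handled jump by jump, with absolute convergence from $\sum|\Delta L|<\infty$. Together these give the limit law, which is conditionally Gaussian and centered because the equation is linear in $(\eta_0,M)$.

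Conditional convergence for almost every master gives $d_{\mathrm{BL}}(K_N,K)\to0$ in probability by dominated convergence. One caveat: the fixed-realization argument needs the tightness bounds to be uniform on events of large probability. I would therefore localize on events $\{\sum_{r\le T}|\Delta L_r|\le R\}$.

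\textbf{Acceptance term, the main obstacle.} Couple $X^{i,N}$ and $\widetilde X^{i,N}$ synchronously, with the same $X_0^i$ and $W^i$. The difference $D_t:=\max_i d(X^{i,N}_t,\widetilde X^{i,N}_t)$ then satisfies
\[
 D_t\le C\int_0^t\bigl(D_r+W_1(\mu^N_r,\widetilde\mu^N_r)\bigr)\dd r+V^N_t,
\]
with $W_1\le D$, so Gronwall gives $D_T\le CV^N_T$. The crux is the positive-strip estimate. The measure of the strip $\{u\text{ between }\mu^N_{r-}(f)\text{ and }\mu_{r-}(f)\}$ is $|\mu^N_{r-}(f)-\mu_{r-}(f)|$. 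By compensation, the total mass of accepted-differently marks with $|z|>x$ has expectation at most $\nu_\alpha(|z|>x)\,\Ee A^N_\tau\lesssim x^{-\alpha}\Ee A^N_\tau$. Marks with $|z|\le x$ contribute $\int_{|z|\le x}|z|\nu_\alpha\cdot A\sim x^{1-\alpha}A$.

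This strip bound yields $\Pp(V_\tau^N>x,A^N_\tau\le v)\le Cvx^{-\alpha}$, stopping at the first time $A^N$ exceeds $v$. Next, $A^N_T=O_{\Pp}(N^{-1/2})$ follows from $\mu^N(f)-\mu(f)=(\widetilde\mu^N-\mu)(f)+O(D)$. The first piece is $O_{\Pp}(N^{-1/2})$ by the sampling moment bounds, and the $O(D)$ piece is absorbed through a stopping argument. Taking $v=KN^{-1/2}$ and $x=yN^{-1/(2\alpha)}$ makes $vx^{-\alpha}=Ky^{-\alpha}$ small, which yields $V_T^N=O_{\Pp}(N^{-1/(2\alpha)})$ and hence \eqref{mf:signal-rate}.

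The delicate point is this circular dependence between $A^N$ and $V^N$. I would handle it by localizing with stopping times $\tau_v:=\inf\{t:A^N_t>v\}$ and bootstrapping.
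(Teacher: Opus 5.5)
Your acceptance-error argument is the paper's. Synchronous coupling gives $D^N\le C_TV^N$. Compensating the strip, stopped when $A^N$ exceeds $v$, gives $\Pp(V_\tau^N>x,\ A_\tau^N\le v)\le Cvx^{-\alpha}$. The stopping-time bootstrap closes because at $x\asymp MN^{-1/2}$ this bound is of order $(MN^{-1/2})^{1-\alpha}\to0$, which is where $\alpha<1$ enters (Lemmas~\ref{mf:strip} and~\ref{mf:bootstrap}). The kernel statement then passes from $\widetilde\eta^N$ to $\eta^N$ through $d_{\mathrm{BL}}(K_N,\widetilde K_N)\le\Ee[1\wedge\|\eta^N-\widetilde\eta^N\|_{\infty,H^{-s}}\mid\cG_T^0]$.

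The gap is in the sampling part, which you run in the original frame with the master fixed. Given the master, $\widetilde\eta^N$ and its limit have \emph{fixed} discontinuities at the accepted jump times of $L$. Because $\nu_\alpha$ has infinite mass, these times are dense in every interval on which $\mu_{t-}(f)$ stays positive. Aldous's criterion fails in this situation. At a deterministic time $t_0$ with $\Delta L_{t_0}=z\ne0$, the increment over $[t_0-\theta,t_0]$ is close to $\tau_z\widetilde\eta^N_{t_0-}-\widetilde\eta^N_{t_0-}$. In general this does not vanish as $\theta\downarrow0$, and Aldous's condition would force limits without fixed jump times. For the same reason, ``between jumps'' and ``jump by jump'' have no meaning in your uniqueness and identification steps.

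The missing idea is to remove the signal before proving tightness. In the co-moving frame $Y^{i,N}=\widetilde X^{i,N}-L$ the particles are continuous diffusions whose time-measurable coefficients have bounds independent of $L$ (Lemma~\ref{mf:prescribed}). Hence $\zeta^N=\tau_{-L}\widetilde\eta^N$ is tight in $C([0,T];H^{-s})$ by ordinary increment estimates, and its limit is Gaussian and unique by backward duality. The map $z\mapsto\tau_Lz$ is a uniform isometry from $C([0,T];H^{-s})$ into $D([0,T];H^{-s})$, so $J_1$ convergence comes for free and no localization on $\{\sum|\Delta L|\le R\}$ is needed.

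The jump integral in \eqref{mf:limit-equation} is then obtained in three steps: truncate the accepted jumps to $L^{[\delta]}$, apply the product rule to $\phi(\cdot+L_t^{[\delta]})$, and let $\delta\downarrow0$ using $\operatorname{Var}_{[0,T]}(L-L^{[\delta]})\to0$ and $\|\phi(\cdot+z)-\phi\|_{H^s}\le C|z|\|\phi\|_{H^{s+1}}$. Conversely, testing any c\`adl\`ag solution against $\phi(\cdot-L_t^{[\delta]})$ shows that $\tau_{-L}\eta$ solves the continuous co-moving equation, which gives uniqueness. You could instead salvage your frame by checking the $w'$ modulus with partition points at the finitely many jumps larger than $\delta$, but identification and uniqueness would still need this truncation.

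A smaller point: the uniform moment bound is not a plain Gronwall estimate on the field, since the drift loses two derivatives. The paper obtains \eqref{mf:sampling-moments} by coupling with i.i.d. nonlinear copies and by dyadic chaining of the independent-copy field.
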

The bridge covariance is
$\Ee\left[\eta_0(\phi)\eta_0(\psi)\right]=\mu_0(\phi\psi)-\mu_0(\phi)\mu_0(\psi)$.
An accepted atom $(t,z,u)$ gives $\eta_t=\tau_z\eta_{t-}$.
A rejected atom leaves $\eta$ unchanged. We use the decomposition
\begin{equation}\label{mf:split}
 \mu^N-\mu=(\widetilde\mu^N-\mu)+(\mu^N-\widetilde\mu^N),
\end{equation}
whose terms are controlled by sampling and disagreement variation,
respectively.
\subsection{Sampling with a prescribed signal}
Fix $\ell\in D([0,T];\R^d)$ with $\ell_0=0$.
Let $\widetilde X^{i,N}$ and $\mu^\ell$ solve the empirical and
nonlinear equations driven by $\ell$, and set
\[
 Y_t^{i,N}:=\widetilde X_t^{i,N}-\ell_t,\quad
 \nu_t^N:=\tau_{-\ell_t}\widetilde\mu_t^N,\quad
 \nu_t:=\tau_{-\ell_t}\mu_t^\ell,\quad
 \zeta_t^N:=\sqrt N(\nu_t^N-\nu_t).
\]
The paths $Y^{i,N}$ are continuous and solve
\[
 \dd Y_t^{i,N}=\widehat B_t(Y_t^{i,N},\nu_t^N)\dd t+\sigma\dd W_t^i,
\]
where
\[
 \widehat B_t:\left|\begin{aligned}
 \mathbf T^d\times\Prob(\mathbf T^d)&\longrightarrow \R^d\\
 (y,\nu)&\longmapsto B(y+\ell_t,m(\tau_{\ell_t}\nu)),
 \end{aligned}\right.
\qquad
 g_{j,t}:\left|\begin{aligned}
 \mathbf T^d&\longrightarrow \R\\
 y&\longmapsto g_j(y+\ell_t).
 \end{aligned}\right.
\]
These coefficients are measurable in time, with spatial bounds
independent of $\ell$.
\begin{lemma}[Prescribed-signal sampling]\label{mf:prescribed}
Under Assumption~\ref{mf:assumptions}, put $r:=s-2>d/2+1$.
For every $p\ge2$,
\begin{equation}\label{mf:sampling-moments}
 \sup_N\Ee\left[\sup_{t\le T}
 \|\sqrt N(\widetilde\mu_t^N-\mu_t^\ell)\|_{H^{-r}}^p\right]
 \le C_{p,T},
\end{equation}
with $C_{p,T}$ independent of $\ell$. The fields $\zeta^N$ converge in
$C([0,T];H^{-s})$ to the continuous Gaussian solution of the co-moving
linearized equation. Hence
$\sqrt N(\widetilde\mu^N-\mu^\ell)$ converges in
$D([0,T];H^{-s})$ to its translation by $\ell$.
If $\ell$ is measurable with respect to an environment independent of
the initial sample and Brownian motions, these conclusions hold for
conditional laws for almost every environment.
\end{lemma}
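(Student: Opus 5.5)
The plan is to work entirely in the co-moving frame, where the prescribed signal disappears and we are left with a classical McKean--Vlasov system whose coefficients are measurable in time and have spatial bounds uniform in $\ell$. We then apply the Hilbertian fluctuation method of \citet{FernandezMeleard1997,KurtzXiong2004}. Since $\tau_{\ell_t}$ is an isometry of every $H^\beta$, the moment bound \eqref{mf:sampling-moments} is equivalent to the same bound for $\zeta^N$ in $H^{-r}$. The path statement in $D([0,T];H^{-s})$ then follows from the $C([0,T];H^{-s})$ statement by composing with $t\mapsto\tau_{\ell_t}$. This map is continuous in $J_1$ when the limit is continuous and $\ell$ is càdlàg: translation is jointly continuous on $\R^d\times H^{-s'}$ restricted to bounded sets of a smaller space $H^{-s'}$, $s'<s$, because $\|\tau_z\xi-\tau_w\xi\|_{H^{-s}}\le C|z-w|\,\|\xi\|_{H^{-s+1}}$. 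We therefore carry the moment bound in $H^{-r}$ with $r=s-2$ and take limits in $H^{-s}$.

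For the moment bound, we introduce i.i.d. nonlinear copies $\bar Y^i$ driven by the same $(X_0^i,W^i)$ and by the law $\nu_t$. Synchronous coupling gives $\Ee\sup_t|Y^{i,N}_t-\bar Y^i_t|^p\le CN^{-p/2}$. This uses Lipschitz continuity of $\widehat B_t$ in $(y,m)$, and the fact that $m(\nu^N)-m(\bar\nu^N)$ is controlled by the coupling distance plus the i.i.d. fluctuation $m(\bar\nu^N_t)-m(\nu_t)$, which is $O(N^{-1/2})$ in $L^p$ by Burkholder and Rosenthal. Then $\sqrt N(\nu^N-\bar\nu^N)$ is bounded in $H^{-r}$ by \eqref{mf:dirac-bound}, with $r>d/2+1$. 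The i.i.d. part $\sqrt N(\bar\nu^N_t-\nu_t)$ is a sum of independent centered $H^{-r}$-valued variables. For the supremum in time, we use Itô's formula on $\phi(\bar Y_t^i)$ for the basis $\phi=e_k$, weighted by $(1+|k|^2)^{-r}$. This gives a semimartingale decomposition whose drift is bounded in $H^{-(r+2)}$ and whose martingale part is controlled by a Doob inequality in the Hilbert space. To avoid losing two derivatives, we instead estimate $\zeta^N$ directly by the Kurtz--Xiong energy method: the evolution equation of $\zeta^N$ tested on $H^{r}$ with the $C^{\lceil s\rceil+2}$ bounds, plus a Gronwall argument in a weaker norm and the Hilbert--Schmidt embedding $H^{r}\hookrightarrow H^{r-d/2-\epsilon}$. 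All constants depend only on the bounds of $B$, $g_j$, $\sigma$ and $T$, so uniformity in $\ell$ is automatic.

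For convergence, $\zeta^N$ satisfies
\[
\zeta^N_t(\phi)=\zeta^N_0(\phi)+\int_0^t\zeta^N_r(\widehat{\mathcal B}_r\phi)\dd r+M^N_t(\phi)+R^N_t(\phi),
\]
where $\widehat{\mathcal B}_r$ is the co-moving version of $\mathcal B$, $M^N$ is a continuous martingale with bracket $\int_0^t\nu^N_r(\nabla\phi^\top a\nabla\psi)\dd r$, and $R^N$ comes from the second-order Taylor remainder of $B$ in $m$, so that $\|R^N\|_{\infty,H^{-s}}\le CN^{-1/2}\sup_t\|\zeta^N_t\|_{H^{-r}}^2\to0$ in $L^1$. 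Tightness in $C([0,T];H^{-s})$ comes from the moment bound, the compact embedding $H^{-r}\hookrightarrow H^{-s}$, and Aldous/Kolmogorov increment estimates in $H^{-s}$. The bracket converges by the law of large numbers $\nu^N\to\nu$. The martingale CLT then identifies limit points as solutions of the linear equation with a Gaussian martingale and a Brownian-bridge initial datum. Uniqueness follows because the equation is linear with bounded time-measurable generator on $H^{-s}$: the difference of two solutions satisfies a Gronwall inequality in $H^{-s-2}$ via duality with the backward Kolmogorov equation, which is solvable in $H^{s}$ thanks to the smoothness of $B$. This duality and uniqueness step, with coefficients only measurable in time, is where I expect the main technical work.

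Finally, if $\ell$ is measurable with respect to an independent environment, we condition on it. Independence means the conditional law of $(X_0^i,W^i)$ is unchanged, so the deterministic-$\ell$ results apply pathwise for a.e. environment, and measurability of the kernel in $\ell$ follows from the construction.
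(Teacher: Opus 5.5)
Your outline follows the paper's proof almost step by step: co-moving frame, synchronous coupling with i.i.d. nonlinear copies, the Dirac bound \eqref{mf:dirac-bound}, compactness from $H^{-r}\hookrightarrow H^{-s}$, a Taylor remainder of order $N^{-1/2}\|\zeta_t^N\|_{H^{-r}}^2$, the martingale CLT, backward duality, and conditioning on the environment. The gap is the one step where you leave that route: the bound on $\sup_{t\le T}\|\Gamma_t^N\|_{H^{-r}}$ for the i.i.d. field $\Gamma_t^N:=N^{-1/2}\sum_{i\le N}(\delta_{\bar Y_t^i}-\nu_t)$, on which \eqref{mf:sampling-moments} rests. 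As you observe, It\^o's formula puts the drift only in $H^{-(r+2)}=H^{-s}$. That is too weak both for \eqref{mf:sampling-moments} and for compactness in $H^{-s}$. The energy method you substitute is named, but its decisive inequality is never stated, and using it would leave your coupling unused for the moment bound.

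The missing observation is that no semimartingale decomposition is needed. Since $r>d/2+1$, $x\mapsto\delta_x$ is Lipschitz into $H^{-r}$. Hence $\Gamma_t^N-\Gamma_u^N$ is a normalized sum of i.i.d. centered $H^{-r}$-valued variables of norm at most $C(|\bar Y_t^i-\bar Y_u^i|+|t-u|^{1/2})$. The Hilbert-space Rosenthal inequality gives $\Ee[\|\Gamma_t^N-\Gamma_u^N\|_{H^{-r}}^{p_*}]\le C|t-u|^{p_*/2}$ uniformly in $N$ and $\ell$. Dyadic chaining with $p_*>2$ then bounds the supremum; this is the paper's argument.

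If you keep the energy route instead, you must prove $\langle\widehat{\mathcal A}_{t,\nu}^*\zeta,\zeta\rangle_{H^{-r}}\le C\|\zeta\|_{H^{-r}}^2$ uniformly in $t$, $\nu$ and $\ell$. The second-order part is nonpositive because $a$ is constant, and this is what allows singular $a$. Under the Riesz identification of $H^{-r}$ with $H^r$, the transport part becomes an $H^r$ commutator estimate for $\widehat B_t(\cdot,\nu)\cdot\nabla$. The moment-linearization term is trivially bounded by $C\|\zeta\|_{H^{-r}}^2$. Then combine It\^o's formula for $\|\zeta_t^N\|_{H^{-r}}^2$, the trace bound \eqref{mf:trace-bound}, BDG and Gronwall.

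There are two smaller points. First, the linearized operator loses two derivatives, so it is not bounded from any negative Sobolev space to itself, and no Gronwall inequality in $H^{-s-2}$ proves uniqueness. Duality instead gives the representation $\zeta_t(\phi)=\zeta_0(v_0)+\int_0^t\dd\widehat M_u(v_u)$. Here $v$ solves the backward equation for the full linearized operator, including the nonlocal McKean--Vlasov term from the moment dependence. This representation expresses every solution through $(\zeta_0,\widehat M)$, and that is what fixes the law and gives Gaussianity. Because $a$ may be singular and the coefficients are only measurable in time, the paper builds $v$ from the diffusion transition operators by Picard iteration. It obtains $C^k$ bounds by differentiating the flow and smooths $v$ before applying the product rule. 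Second, the translation step needs no joint continuity estimate. For fixed $\ell$, $z\mapsto(\tau_{\ell_t}z_t)_{t\le T}$ is an isometry for the uniform norm, and uniform convergence implies $J_1$ convergence.
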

\begin{proof}
Choose $p_*>\max\{p,2\}$. Couple $Y^{i,N}$ with i.i.d. nonlinear
copies $\bar Y^i$ of law $\nu$, using the same initial states and
Brownian motions. In paired Euclidean lifts, put
\[
 Z_{j,t}^N:=\frac1N\sum_{i=1}^N g_{j,t}(\bar Y_t^i)-\nu_t(g_{j,t}),
 \qquad
 \Delta_N(t):=\max_{i\le N}\sup_{u\le t}|Y_u^{i,N}-\bar Y_u^i|.
\]
Since the Brownian terms cancel,
\begin{align*}
 |\nu_t^N(g_{j,t})-\nu_t(g_{j,t})|
 &\le\|g_j\|_{\mathrm{Lip}}\Delta_N(t)+|Z_{j,t}^N|,\\
 \Delta_N(t)&\le C\int_0^t\Delta_N(u)\dd u
                  +C\sum_{j=1}^q\int_0^t|Z_{j,u}^N|\dd u,\qquad
 \Delta_N(T)\le C_T\sum_{j=1}^q\int_0^T|Z_{j,u}^N|\dd u.
\end{align*}
The summands defining $Z_{j,t}^N$ are centered, bounded, and i.i.d., so
\begin{equation}\label{mf:empirical-moment}
 \Ee\left[|Z_{j,t}^N|^{p_*}\right]\le C_{p_*}N^{-p_*/2}.
\end{equation}
Jensen's inequality gives
\begin{equation}\label{mf:particle-comparison}
 \Ee\left[\Delta_N(T)^{p_*}\right]
 \le C_{p_*,T}\sum_{j=1}^q\int_0^T\Ee\left[|Z_{j,u}^N|^{p_*}\right]\dd u
 \overset{\eqref{mf:empirical-moment}}{\le} C_{p_*,T}N^{-p_*/2}.
\end{equation}
For the independent-copy empirical field
$\Gamma_t^N:=N^{-1/2}\sum_{i=1}^N(\delta_{\bar Y_t^i}-\nu_t)$,
bounded drift and Brownian increments imply
\[
 \Ee\left[|\bar Y_t^i-\bar Y_u^i|^{p_*}\right]
 \le C_{p_*,T}|t-u|^{p_*/2}.
\]
For centered i.i.d. variables $\Xi_i$ in a real Hilbert space,
symmetrization and the Hilbert-space moment inequality yield
\[
 \Ee\left[\left\|N^{-1/2}\sum_{i=1}^N\Xi_i\right\|^{p_*}\right]
 \le C_{p_*}\left\{(\Ee\left[\|\Xi_1\|^2\right])^{p_*/2}
              +N^{1-p_*/2}\Ee\left[\|\Xi_1\|^{p_*}\right]\right\}.
\]
For the centered $H^{-r}$-valued Dirac increments, this gives
\begin{equation}\label{mf:independent-increments}
 \Ee\left[\|\Gamma_t^N-\Gamma_u^N\|_{H^{-r}}^{p_*}\right]
 \overset{\eqref{mf:dirac-bound}}{\le} C_{p_*,T}|t-u|^{p_*/2},\qquad
 \Ee\left[\|\Gamma_0^N\|_{H^{-r}}^{p_*}\right]\le C_{p_*}.
\end{equation}
For $t_{n,k}:=Tk2^{-n}$, dyadic approximation and Minkowski give
\begin{align*}
 \left\|\sup_{t\le T}\|\Gamma_t^N\|_{H^{-r}}\right\|_{L^{p_*}}
 &\le\|\Gamma_0^N\|_{L^{p_*}(H^{-r})}
 +\sum_{n\ge0}\left(\sum_{k=1}^{2^n}
 \Ee\left[\|\Gamma_{t_{n,k}}^N-\Gamma_{t_{n,k-1}}^N\|_{H^{-r}}^{p_*}\right]
 \right)^{1/p_*}\\
 &\overset{\eqref{mf:independent-increments}}{\le}
 C+C\sqrt T\sum_{n\ge0}2^{-n(1/2-1/p_*)}<\infty.
\end{align*}
Finally,
\[
 \sup_{t\le T}\|\zeta_t^N\|_{H^{-r}}
 \overset{\eqref{mf:dirac-bound}}{\le}
 \sup_{t\le T}\|\Gamma_t^N\|_{H^{-r}}
       +C_r\sqrt N\Delta_N(T).
\]
Together with \eqref{mf:particle-comparison} and translation isometry,
this proves \eqref{mf:sampling-moments}.
Define the co-moving generator
\[
 \widehat{\mathcal A}_{t,\nu}:\left|\begin{aligned}
 H^s&\longrightarrow H^{s-2}\\
 \phi&\longmapsto \widehat B_t(\cdot,\nu)\cdot\nabla\phi
                         +\tfrac12a:D^2\phi.
 \end{aligned}\right.
\]
It\^o's formula gives the exact $H^{-s}$ fluctuation drift
\begin{equation}\label{mf:exact-drift}
 \widehat{\mathcal A}_{t,\nu_t^N}^*\zeta_t^N
 -\sum_{j=1}^q\zeta_t^N(g_{j,t})\nabla\cdot(c_{j,t}^N\nu_t),
\end{equation}
where the vector field $c_{j,t}^N:\mathbf T^d\to\R^d$ is
\[
 c_{j,t}^N:\left|\begin{aligned}
 \mathbf T^d&\longrightarrow \R^d\\
 y&\longmapsto \displaystyle\int_0^1
 \partial_{m_j}B\bigl(y+\ell_t,
 m(\tau_{\ell_t}[(1-\theta)\nu_t+\theta\nu_t^N])\bigr)\dd\theta.
 \end{aligned}\right.
\]
The martingale part is
\[
 \widehat M_t^N(\phi)
 :=\frac1{\sqrt N}\sum_{i=1}^N\int_0^t
 \nabla\phi(Y_u^{i,N})^\top\sigma\dd W_u^i,\qquad
 \ip{\widehat M^N(\phi)}{\widehat M^N(\psi)}_t
 =\int_0^t\nu_u^N(\nabla\phi^\top a\nabla\psi)\dd u.
\]
For $r>d/2+1$, Sobolev embedding gives
\[
 |\zeta(g_{j,t})|\le C\|\zeta\|_{H^{-r}},\qquad
 \|\nabla\cdot(c_{j,t}^N\nu_t)\|_{H^{-r}}\le C,
\]
and the covariance trace per unit time is bounded by
\begin{equation}\label{mf:trace-bound}
 4\pi^2\sum_{k\in\Z^d}(1+|k|^2)^{-r}k^\top ak<\infty.
\end{equation}
Thus $\widehat M^N$ is a continuous $H^{-r}$-valued martingale,
and the drift in \eqref{mf:exact-drift} has $H^{-s}$ norm at most
$C\|\zeta_t^N\|_{H^{-r}}$. For stopping times $\theta\le T$ and
$0\le h\le\delta$,
\[
 \Ee\left[\|\zeta_{(\theta+h)\wedge T}^N-\zeta_\theta^N\|_{H^{-s}}^2\right]
 \le C_T(\delta+\delta^2).
\]
The moment bound, compact embedding $H^{-r}\hookrightarrow H^{-s}$,
and these increment estimates give tightness with continuous limits.
The Hilbert-space initial CLT gives the $\mu_0$-Brownian bridge.
For $\phi\in C^\infty(\mathbf T^d)$, define
\[
 h_{j,t}^\phi:\left|\begin{aligned}
 \mathbf T^d&\longrightarrow \R\\
 y&\longmapsto \partial_{m_j}B(y+\ell_t,m(\tau_{\ell_t}\nu_t))
                       \cdot\nabla\phi(y).
 \end{aligned}\right.
\]
Taylor's formula in the moment vector yields
\begin{gather*}
 \sqrt N\{\nu_t^N(\widehat{\mathcal A}_{t,\nu_t^N}\phi)
                    -\nu_t(\widehat{\mathcal A}_{t,\nu_t}\phi)\}
 =\zeta_t^N(\widehat{\mathcal A}_{t,\nu_t}\phi)
   +\sum_{j=1}^q\zeta_t^N(g_{j,t})\nu_t(h_{j,t}^\phi)
   +R_N(t,\phi),\\
 \begin{aligned}
 |R_N(t,\phi)|
 &\le C_\phi\left\{\|\zeta_t^N\|_{H^{-r}}
                  |m(\tau_{\ell_t}(\nu_t^N-\nu_t))|
           +\sqrt N|m(\tau_{\ell_t}(\nu_t^N-\nu_t))|^2\right\}\\
 &\le C_\phi N^{-1/2}\|\zeta_t^N\|_{H^{-r}}^2.
 \end{aligned}
\end{gather*}
Hence
\[
 \Ee\left[\int_0^T|R_N(t,\phi)|\dd t\right]
 \le C_\phi N^{-1/2}\int_0^T\Ee\left[\|\zeta_t^N\|_{H^{-r}}^2\right]\dd t
 \overset{\eqref{mf:sampling-moments}}{\le}C_{\phi,T}N^{-1/2}.
\]
For $H:=\nabla\phi^\top a\nabla\psi$,
\begin{align*}
 \Ee\left[\sup_{t\le T}\left|\int_0^t(\nu_u^N-\nu_u)(H)\dd u\right|\right]
 &\le\|H\|_{\mathrm{Lip}}T\Ee\left[\Delta_N(T)\right]
 +\int_0^T\Ee\left[\left|\frac1N\sum_iH(\bar Y_u^i)-\nu_u(H)\right|\right]\dd u\\
 &\overset{\eqref{mf:particle-comparison}}{\le}C_{\phi,\psi,T}N^{-1/2}.
\end{align*}
The joint CLT for the initial field and martingale therefore gives a centered
Gaussian martingale $\widehat M$, independent of the initial bridge,
with covariance
\[
 \ip{\widehat M(\phi)}{\widehat M(\psi)}_t
 =\int_0^t\nu_u(\nabla\phi^\top a\nabla\psi)\dd u
\]
\citep{FernandezMeleard1997,EthierKurtz1986,Rebolledo1980}.
Every limit solves
\[
 \zeta_t(\phi)=\zeta_0(\phi)+\widehat M_t(\phi)
 +\int_0^t\left\{\zeta_u(\widehat{\mathcal A}_{u,\nu_u}\phi)
       +\sum_j\zeta_u(g_{j,u})\nu_u(h_{j,u}^\phi)\right\}\dd u.
\]
We prove uniqueness by backward duality. For $0\le u\le v\le T$,
let
\[
 P_{u,v}:\left|\begin{aligned}
 C^1(\mathbf T^d)&\longrightarrow C^1(\mathbf T^d)\\
 \phi&\longmapsto [y\longmapsto\Ee_{u,y}\left[\phi(\bar Y_v)\right]],
 \end{aligned}\right.
\qquad
 \widehat{\mathcal K}_u:\left|\begin{aligned}
 C^1(\mathbf T^d)&\longrightarrow C^1(\mathbf T^d)\\
 \phi&\longmapsto \displaystyle\sum_jg_{j,u}\nu_u(h_{j,u}^\phi).
 \end{aligned}\right.
\]
Here $\Ee_{u,y}$ denotes expectation for the diffusion with fixed law flow $\nu$
started from $y$ at time $u$. Both operators are uniformly bounded
on $C^1$. For $\phi\in C^\infty(\mathbf T^d)$ and $t\le T$, solve
\begin{equation}\label{mf:backward}
 v_u=P_{u,t}\phi+\int_u^tP_{u,w}\widehat{\mathcal K}_w v_w\dd w,
 \qquad 0\le u\le t.
\end{equation}
The $n$th Picard term is bounded in $C^1$ by
$C_T^{n+1}(t-u)^n\|\phi\|_{C^1}/n!$. Differentiating the diffusion
flow and the integral equation gives
\[
 \sup_{u\le t}\|v_u\|_{C^k}\le C_T\|\phi\|_{C^k},
 \qquad 1\le k\le\lceil s\rceil+2.
\]
Consequently,
\[
 v\in L^\infty([0,t];H^{s+2})\cap W^{1,\infty}([0,t];H^s),\qquad
 \partial_uv_u=-(\widehat{\mathcal A}_{u,\nu_u}
                      +\widehat{\mathcal K}_u)v_u
 \quad\text{for a.e. }u,\qquad v_t=\phi.
\]
The derivative follows from the integral evolution identity for
$P_{u,w}$ and requires only measurable time dependence.
Extend $v$ constantly beyond $[0,t]$ and smooth it in time and space.
The resulting tests $v^{(n)}$ satisfy
\[
 \|v^{(n)}-v\|_{L^1(H^{s+2})}
 +\|v^{(n)}-v\|_{C(H^s)}
 +\|\partial_uv^{(n)}-\partial_uv\|_{L^1(H^s)}\longrightarrow0.
\]
Apply the product rule to $\zeta_u(v_u^{(n)})$.
The drift passes to the limit through the uniform operator bound
$H^{s+2}\to H^s$, and the martingale through It\^o isometry and
$H^s\hookrightarrow C^1$. For any càdlàg $H^{-s}$-valued
solution, the same limit argument applies after localizing its bounded path norm.
Thus
\[
 \zeta_t(\phi)=\zeta_0(v_0)+\int_0^t\dd\widehat M_u(v_u).
\]
This identifies the law, proves Gaussianity and uniqueness, and gives
convergence of the full sequence. Finally,
\[
 \sup_{t\le T}\|\tau_{\ell_t}z_n(t)-\tau_{\ell_t}z(t)\|_{H^{-s}}
 =\sup_{t\le T}\|z_n(t)-z(t)\|_{H^{-s}}
\]
for $z_n,z\in C([0,T];H^{-s})$. Translating back therefore gives
convergence in the original frame. Conditioning on an independent environment
fixes $\ell$ and leaves the private-input law unchanged.
\end{proof}
\subsection{Positive disagreement strips}
Strip estimates use the unconditioned filtration $(\cF_t)_{t\ge0}$.
\begin{lemma}[Positive strip estimate]\label{mf:strip}
Let $a,b$ be predictable $[0,\bar f]$-valued processes in a filtration
preserving the compensator $\dd t\,\nu_\alpha(\dd z)\dd u$. Define
\begin{align*}
 A_t&:=\int_0^t|a_r-b_r|\dd r,\qquad
 V_t:=\int_{[0,t]\times\R^d\times[0,\bar f]}
 |z|\,|\ind_{\{u\le a_r\}}-\ind_{\{u\le b_r\}}|
 \mathcal N^0(\dd r,\dd z,\dd u),\\
 K_t&:=\int_{[0,t]\times\R^d\times[0,\bar f]}
 z(\ind_{\{u\le a_r\}}-\ind_{\{u\le b_r\}})
 \mathcal N^0(\dd r,\dd z,\dd u).
\end{align*}
For every stopping time $\tau\le T$ and $x,v>0$,
\begin{equation}\label{mf:strip-estimate}
 \Pp(V_\tau>x,\ A_\tau\le v)\le Cvx^{-\alpha},
 \qquad \sup_{t\le\tau}|K_t|\le V_\tau.
\end{equation}
The estimate is uniform for mark measures $\nu_N$ satisfying
\begin{equation}\label{mf:tail-bounds}
 \nu_N(|z|>x)\le Cx^{-\alpha},\qquad
 \int_{|z|\le x}|z|\nu_N(\dd z)\le Cx^{1-\alpha},
 \qquad x>0.
\end{equation}
\end{lemma}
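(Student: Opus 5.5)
The second claim in \eqref{mf:strip-estimate} is immediate. For each atom $(r,z,u)$,
\[
 |z(\ind_{\{u\le a_r\}}-\ind_{\{u\le b_r\}})|
 =|z|\,|\ind_{\{u\le a_r\}}-\ind_{\{u\le b_r\}}|,
\]
and the integrals are absolutely convergent sums with nonnegative terms. Hence $|K_t|\le V_t\le V_\tau$ for $t\le\tau$.

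For the probability bound, split marks at level $x$, writing $V_\tau=V_\tau^{>}+V_\tau^{\le}$ according to $|z|>x$ or $|z|\le x$. Then
\[
 \Pp(V_\tau>x,\ A_\tau\le v)\le\Pp(V_\tau^{>}>0,\ A_\tau\le v)
 +\Pp(V_\tau^{\le}>x,\ A_\tau\le v).
\]
To use the condition $A_\tau\le v$ inside compensator computations, introduce the stopping time
\[
 \sigma:=\inf\{t:A_t>v\}\wedge\tau .
\]
Since $A$ is continuous and nondecreasing, $A_\sigma\le v$, and on $\{A_\tau\le v\}$ we have $\sigma=\tau$. It therefore suffices to bound both probabilities with $\tau$ replaced by $\sigma$, dropping the event.

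The key fact is the compensator of the strip. The integrand
\[
 |\ind_{\{u\le a_r\}}-\ind_{\{u\le b_r\}}|
\]
is predictable and is the indicator of a $u$-interval of length $|a_r-b_r|$. Because the filtration preserves the compensator $\dd t\,\nu_\alpha(\dd z)\dd u$, for every nonnegative predictable $h(r,z)$,
\[
 \Ee\Bigl[\int_{[0,\sigma]}h(r,z)\,|\ind_{\{u\le a_r\}}-\ind_{\{u\le b_r\}}|\,\mathcal N^0\Bigr]
 =\Ee\Bigl[\int_0^\sigma\!\!\int h(r,z)\,\nu_\alpha(\dd z)\,|a_r-b_r|\dd r\Bigr].
\]
I would justify this first for bounded $h$ vanishing near $z=0$, then pass to general $h$ by monotone convergence.

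The large-jump term uses $h=\ind_{\{|z|>x\}}$ and Markov's inequality on the count of accepted disagreement atoms:
\[
 \Pp(V_\sigma^{>}>0)\le\Ee\Bigl[\nu_\alpha(|z|>x)A_\sigma\Bigr]\le Cx^{-\alpha}v .
\]
The small-jump term uses $h=|z|\ind_{\{|z|\le x\}}$ and Markov's inequality on the first moment:
\[
 \Pp(V_\sigma^{\le}>x)\le x^{-1}\,\Ee\bigl[V_\sigma^{\le}\bigr]
 \le x^{-1}\int_{|z|\le x}|z|\,\nu_\alpha(\dd z)\,\Ee[A_\sigma]
 \le x^{-1}Cx^{1-\alpha}v=Cvx^{-\alpha}.
\]
Here $\int_{|z|\le x}|z|\,\nu_\alpha(\dd z)=c\,x^{1-\alpha}$ holds precisely because $\alpha<1$. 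Summing the two terms gives $\Pp(V_\tau>x,\ A_\tau\le v)\le Cvx^{-\alpha}$.

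The argument uses only the two bounds in \eqref{mf:tail-bounds}. Replacing $\nu_\alpha$ by $\nu_N$ with the same constant therefore gives the uniform version. The only delicate point is the localization through $\sigma$, needed so that the event $\{A_\tau\le v\}$ can be moved inside the compensator identity. Working directly with $\tau$ would require conditioning on a non-predictable event, which the stopping time avoids.
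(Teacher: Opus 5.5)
Your proposal is correct and follows essentially the same route as the paper: stop when $A$ exceeds the budget, split the disagreement marks into large and small ones, and bound the two pieces by Markov's inequality through predictable compensation, using only \eqref{mf:tail-bounds}. The paper stops at level $v+\varepsilon$, splits the marks at $x/2$, and lets $\varepsilon\downarrow0$ at the end. Your direct choices, threshold $v$ and split at $x$, work equally well, because continuity of $A$ already gives $A_\sigma\le v$ and $\sigma=\tau$ on $\{A_\tau\le v\}$.
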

\begin{proof}
Let $\omega_{d-1}$ be the surface area of the unit sphere in $\R^d$.
Polar coordinates give
\begin{align*}
 \nu_\alpha(|z|>x)
 &=c_{d,\alpha}\omega_{d-1}\int_x^\infty r^{-1-\alpha}\dd r
   =\frac{c_{d,\alpha}\omega_{d-1}}\alpha x^{-\alpha},\\
 \int_{|z|\le x}|z|\nu_\alpha(\dd z)
 &=c_{d,\alpha}\omega_{d-1}\int_0^x r^{-\alpha}\dd r
   =\frac{c_{d,\alpha}\omega_{d-1}}{1-\alpha}x^{1-\alpha}.
\end{align*}
Fix $\varepsilon>0$ and stop at
$\widehat\tau:=\tau\wedge\inf\{t:A_t>v+\varepsilon\}\wedge T$.
Let $B_x$ count disagreement marks with $|z|>x/2$ up to
$\widehat\tau$, and let $S_x$ sum the sizes of the remaining
disagreement marks. Since
\[
 \int_0^{\bar f}|\ind_{\{u\le a_r\}}-\ind_{\{u\le b_r\}}|\dd u
 =|a_r-b_r|,
\]
continuity of $A$ and predictable compensation imply
\begin{equation}\label{mf:strip-compensation}
 \Ee\left[B_x\right]\le(v+\varepsilon)\nu_\alpha(|z|>x/2),\qquad
 \Ee\left[S_x\right]\le(v+\varepsilon)\int_{|z|\le x/2}|z|\nu_\alpha(\dd z).
\end{equation}
On $\{A_\tau\le v\}$, stopping has no effect before $\tau$.
Consequently,
\begin{align*}
 \Pp(V_\tau>x,A_\tau\le v)
 &\le\Pp(B_x\ge1)+\Pp(S_x>x)\\
 &\le\Ee\left[B_x\right]+x^{-1}\Ee\left[S_x\right]\\
 &\overset{\eqref{mf:strip-compensation}}{\le}(v+\varepsilon)
 \left[\nu_\alpha(|z|>x/2)
       +x^{-1}\int_{|z|\le x/2}|z|\nu_\alpha(\dd z)\right]\\
 &\overset{\eqref{mf:tail-bounds}}{\le} C(v+\varepsilon)x^{-\alpha}.
\end{align*}
Let $\varepsilon\downarrow0$. The bound $\sup_{t\le\tau}|K_t|\le V_\tau$
is pathwise, and the calculation uses only \eqref{mf:tail-bounds}.
\end{proof}
\begin{lemma}[Closing the strip estimate]\label{mf:bootstrap}
Let $\varepsilon_N\to0$, $R_N=O_{\Pp}(\varepsilon_N)$, and let
$(A^N,V^N)$ start from zero, with $V^N\ge0$ and $A^N$ adapted,
continuous, and nondecreasing. Suppose \eqref{mf:strip-estimate}
holds uniformly for stopping times of the same filtration and
\begin{equation}\label{mf:bootstrap-hypothesis}
 A_t^N\le R_N+CV_t^N,\qquad t\le T.
\end{equation}
Then
\begin{equation}\label{mf:bootstrap-rates}
 A_T^N=O_{\Pp}(\varepsilon_N),\qquad
 V_T^N=O_{\Pp}(\varepsilon_N^{1/\alpha})
       =o_{\Pp}(\varepsilon_N).
\end{equation}
The conclusion also holds for an almost surely finite random $C$
independent of $N$.
\end{lemma}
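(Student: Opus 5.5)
The plan is a bootstrap that combines the strip estimate \eqref{mf:strip-estimate} with the feedback inequality \eqref{mf:bootstrap-hypothesis}. Fix $\delta>0$ and choose $K$ with $\Pp(R_N>K\varepsilon_N)\le\delta$ for all $N$. The aim is to show that, outside an event of probability $O(\delta)$, $A_T^N\le 2K\varepsilon_N$ and $V_T^N\le x_N:=M\varepsilon_N^{1/\alpha}$ for a suitable constant $M$. Since $\alpha<1$, we have $\varepsilon_N^{1/\alpha}=o(\varepsilon_N)$. Hence, for $N$ large, $C x_N\le K\varepsilon_N$, and \eqref{mf:bootstrap-hypothesis} then closes the loop: $A_t^N\le R_N+CV_t^N\le 2K\varepsilon_N$.

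To avoid circularity, I would introduce the stopping time
\[
 \tau_N:=\inf\{t: A_t^N>2K\varepsilon_N\}\wedge T.
\]
Because $A^N$ is continuous and adapted, $\tau_N$ is a stopping time and $A_{\tau_N}^N\le 2K\varepsilon_N$. Applying \eqref{mf:strip-estimate} at $\tau_N$ with $v=2K\varepsilon_N$ and $x=x_N$ gives
\[
 \Pp(V_{\tau_N}^N>x_N)\le C\,2K\varepsilon_N\,M^{-\alpha}\varepsilon_N^{-1}=2CKM^{-\alpha}.
\]
This is at most $\delta$ once $M$ is large.

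Next, on the event $\{R_N\le K\varepsilon_N,\ V_{\tau_N}^N\le x_N\}$, suppose $\tau_N<T$. Continuity gives $A_{\tau_N}^N=2K\varepsilon_N$. On the other hand, \eqref{mf:bootstrap-hypothesis} at $t=\tau_N$ gives $A_{\tau_N}^N\le K\varepsilon_N+Cx_N<2K\varepsilon_N$ for $N$ large, because $x_N/\varepsilon_N\to0$. This contradiction forces $\tau_N=T$ on that event. There we then have $A_T^N\le 2K\varepsilon_N$ and $V_T^N\le M\varepsilon_N^{1/\alpha}$, and the bad event has probability at most $2\delta$. Finitely many small $N$ are absorbed by enlarging constants, since the variables are almost surely finite. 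This yields \eqref{mf:bootstrap-rates}.

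For random almost surely finite $C$, I would first pick $C_0$ with $\Pp(C>C_0)\le\delta$ and run the same argument on $\{C\le C_0\}$. Only the deterministic contradiction step uses \eqref{mf:bootstrap-hypothesis}, so no measurability of $C$ with respect to the filtration is needed. The stopping-time construction does not involve $C$, and the strip estimate constant is the separate deterministic one.

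The only delicate point is ensuring that the threshold $2K\varepsilon_N$ is strictly reached at $\tau_N$ when $\tau_N<T$. This follows from continuity of $A^N$. Without continuity one would instead stop at $v+\varepsilon$, as in the proof of Lemma~\ref{mf:strip}.
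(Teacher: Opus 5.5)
Your argument is correct and is essentially the paper's proof. You stop when $A^N$ first exceeds a multiple of $\varepsilon_N$, use continuity to pin $A^N$ at that level, apply the strip estimate at the stopping time, and handle random $C$ by restricting to $\{C\le C_0\}$. The only difference is bookkeeping: the paper applies the strip estimate at level $x\asymp\varepsilon_N$, which gives the vanishing bound $C'(M\varepsilon_N)^{1-\alpha}$ for $A_T^N$, and then applies it again at $\tau=T$ for $V_T^N$. You instead apply it once at level $x\asymp\varepsilon_N^{1/\alpha}$ and close the loop by a deterministic contradiction using $\varepsilon_N^{1/\alpha}=o(\varepsilon_N)$.
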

\begin{proof}
Replace $C$ by $1\vee C$. For $M>0$, set
$\tau_N:=\inf\{t:A_t^N\ge2M\varepsilon_N\}\wedge T$.
On $\{A_T^N>2M\varepsilon_N,\ R_N\le M\varepsilon_N\}$,
continuity and \eqref{mf:bootstrap-hypothesis} give
\[
 A_{\tau_N}^N=2M\varepsilon_N,\qquad
 CV_{\tau_N}^N\overset{\eqref{mf:bootstrap-hypothesis}}{\ge}
 A_{\tau_N}^N-R_N\ge M\varepsilon_N.
\]
Therefore
\begin{align*}
 \Pp(A_T^N>2M\varepsilon_N)
 &\le\Pp(R_N>M\varepsilon_N)
 +\Pp\!\left(V_{\tau_N}^N>\frac{M\varepsilon_N}{2C},\,
                   A_{\tau_N}^N\le2M\varepsilon_N\right)\\
 &\overset{\eqref{mf:strip-estimate}}{\le}\Pp(R_N>M\varepsilon_N)
      +C'(M\varepsilon_N)^{1-\alpha},\\
 \Pp(V_T^N>R\varepsilon_N^{1/\alpha})
 &\overset{\eqref{mf:strip-estimate}}{\le}
 \Pp(A_T^N>2M\varepsilon_N)+C'MR^{-\alpha},\qquad R>0.
\end{align*}
Taking $\limsup_N$ in the first bound and then $M\to\infty$ proves
tightness of $A_T^N/\varepsilon_N$. In the last bound choose $M$
first and then $R$, giving tightness of
$V_T^N/\varepsilon_N^{1/\alpha}$. Finally,
\[
 \frac{V_T^N}{\varepsilon_N}
 =\varepsilon_N^{1/\alpha-1}
      \frac{V_T^N}{\varepsilon_N^{1/\alpha}}
 \longrightarrow0\quad\text{in probability}.
\]
For random $C$, the same estimates on $\{C\le C_0\}$ have an
additional error $\Pp(C>C_0)$. Let $C_0\to\infty$.
\end{proof}
\subsection{Construction without a cutoff}
\begin{proposition}[Well-posedness]\label{mf:wellposed}
Under Assumption~\ref{mf:assumptions} and $0<\alpha<1$,
\eqref{mf:model} has a pathwise unique adapted solution, and
\eqref{mf:reference} has a unique adapted conditional-law flow.
Restricting the master to $|z|>\varepsilon$, on the same probability
space and for each fixed $N$, gives
\begin{align}
 \max_{i\le N}\sup_{t\le T}
 d_{\mathbf T^d}(X_t^{i,N,\varepsilon},X_t^{i,N})
 +\sup_{t\le T}|L_t^{N,\varepsilon}-L_t^N|
 &\xrightarrow[\varepsilon\downarrow0]{}0,
 \label{mf:cutoff-particles}\\
 \sup_{t\le T}W_1(\mu_t^\varepsilon,\mu_t)
 +\sup_{t\le T}|L_t^\varepsilon-L_t|
 &\xrightarrow[\varepsilon\downarrow0]{}0
 \label{mf:cutoff-reference}
\end{align}
in probability.
\end{proposition}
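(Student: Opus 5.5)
Both systems are built first with a cutoff and then passed to the limit using the strip estimate. Fix $\varepsilon>0$ and restrict $\mathcal N^0$ to $|z|>\varepsilon$. On $[0,T]$ the restricted master has finitely many atoms $(T_k,z_k,u_k)$, since $\nu_\alpha(|z|>\varepsilon)<\infty$. Between atoms, \eqref{mf:model} is a finite-dimensional SDE with Lipschitz drift: $B$ is smooth and $m(\mu^N)$ is a Lipschitz function of the particle positions. It therefore has a pathwise unique strong solution. At an atom we translate all particles by $z_k$ when $u_k\le\mu^N_{T_k-}(f)$, and this decision is $\cF_{T_k-}$-measurable. Concatenation gives $X^{N,\varepsilon}$.

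For the reference flow we work conditionally on $\cG_T^0$. Between atoms, $\mu_t$ solves a McKean--Vlasov equation with Lipschitz dependence through $m$, which is uniquely solvable in $W_1$ on the torus. At an atom, $\mu_{T_k}=\tau_{z_k}\mu_{T_k-}$ when $u_k\le\mu_{T_k-}(f)$. Because the private input is independent of the master, this indeed gives $\mu_t=\Law(\bar X_t\mid\cG_t^0)$ with adapted dependence on the master.

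Next we remove the cutoff. Compare two cutoffs $\varepsilon'<\varepsilon$, or a cutoff solution and a candidate solution, by synchronous coupling. Use the same initial data and Brownian motions, and lift to $\R^d$ so that common translations cancel in the difference $X^{i,N,\varepsilon}-L^{N,\varepsilon}$. Set
\[
 D_t:=\max_{i\le N}\sup_{r\le t}d_{\mathbf T^d}(X_r^{i,N,\varepsilon},X_r^{i,N,\varepsilon'}).
\]
Gronwall then gives
\[
 D_t\le C\bigl(V_t+\text{(small marks)}\bigr),
\]
where $V_t$ is the disagreement variation of Lemma~\ref{mf:strip}, with $a_r=\mu^{N,\varepsilon}_{r-}(f)$ and $b_r=\mu^{N,\varepsilon'}_{r-}(f)$. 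The small-mark term is $\sum|z|$ over accepted atoms with $\varepsilon'<|z|\le\varepsilon$, whose expectation is at most $\bar f T\int_{|z|\le\varepsilon}|z|\nu_\alpha(\dd z)=O(\varepsilon^{1-\alpha})$. Since $|a_r-b_r|\le\|f\|_{\mathrm{Lip}}D_r$, we get
\[
 A_t\le C\int_0^tD_r\dd r\le R_\varepsilon+CV_t,
 \qquad R_\varepsilon=O_{\Pp}(\varepsilon^{1-\alpha}).
\]
This is exactly hypothesis \eqref{mf:bootstrap-hypothesis}. Lemma~\ref{mf:bootstrap}, with $\varepsilon_N$ replaced by $\varepsilon^{1-\alpha}$ and the index $N$ replaced by $\varepsilon$, yields $D_T+\sup|L^{N,\varepsilon}-L^{N,\varepsilon'}|\to0$ in probability, uniformly in $\varepsilon'<\varepsilon$.

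The cutoff solutions are therefore Cauchy in probability, uniformly on $[0,T]$. Their limit solves \eqref{mf:model}: acceptance indicators converge except on atoms with $u$ near the threshold, which the strip estimate controls. The same comparison applied to two genuine solutions gives $R=0$, hence $A=V=0$ almost surely, which is pathwise uniqueness. Indeed, the first bound in the proof of Lemma~\ref{mf:bootstrap} with $R_N=0$ gives $\Pp(A_T>2\delta)\le C\delta^{1-\alpha}\to0$ as $\delta\to0$.

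The reference flow is handled identically, with \eqref{mf:dirac-bound} and $W_1$ replacing the particle distance. Synchronously coupling $\bar X^\varepsilon$ and $\bar X^{\varepsilon'}$ through the same private input gives
\[
 W_1(\mu^\varepsilon_t,\mu^{\varepsilon'}_t)\le\Ee\bigl[d(\bar X^\varepsilon_t,\bar X^{\varepsilon'}_t)\mid\cG_t^0\bigr].
\]
The Gronwall bound now runs through $|m(\mu^\varepsilon)-m(\mu^{\varepsilon'})|\le CW_1$. Here $V_t$ and the small-mark sum are $\cG^0$-measurable, and the strip estimate applies in $(\cG_t^0)$ because the thresholds $\mu_{r-}(f)$ are $\cG^0$-predictable. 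Adaptedness of the limit flow, and the identity $\mu_t=\Law(\bar X_t\mid\cG_t^0)$, pass to the limit through $W_1$-convergence in probability.

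\textbf{Main obstacle.} The hardest step is closing the feedback loop in which the acceptance thresholds depend on the solution. The strip estimate only controls $V$ on events where $A$ is small, so the bootstrap has to be run with stopping times. The Gronwall constant must also be deterministic, or at least almost surely finite, which holds because drift bounds are uniform on the torus. If the uniform-in-$\varepsilon'$ Cauchy argument proves delicate, I would fall back on a dyadic sequence $\varepsilon_n=2^{-n}$, apply the estimate to consecutive pairs, and sum, using $\sum_n\varepsilon_n^{1-\alpha}<\infty$.
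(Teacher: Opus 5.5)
Your proposal is correct and follows the paper's proof essentially step for step. The shared steps are the cutoff construction between finitely many candidates, the synchronous comparison of two cutoffs giving $D_t+A_t\le C_T(J_T^\varepsilon+V_t)$, Lemmas~\ref{mf:strip} and~\ref{mf:bootstrap} for a Cauchy property uniform in $\varepsilon'<\varepsilon$, the conditional-expectation distance for the reference flow, and $A_T=0\Rightarrow V_T=0$ for uniqueness. The only variation is that you identify the limiting jump sum by applying the strip estimate against the (predictable) limit threshold, whereas the paper notes that almost surely no candidate above $\delta$ lies on that threshold, so those finitely many labels eventually agree, and marks below $\delta$ cost at most $J_T^\delta$; both arguments work.
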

\begin{proof}
Above each cutoff, there are finitely many candidate translations, with
Lipschitz diffusions between them. Between candidates the reference law
solves the usual Lipschitz McKean--Vlasov equation. At each candidate
its pre-jump law fixes the acceptance label. This constructs both
cutoff systems. The discarded variation satisfies
\[
 J_T^\varepsilon:=
 \int_{[0,T]\times\{|z|\le\varepsilon\}\times[0,\bar f]}
 |z|\mathcal N^0(\dd t,\dd z,\dd u),\;
 \Ee\left[J_T^\varepsilon\right]
 =T\bar f\int_{|z|\le\varepsilon}|z|\nu_\alpha(\dd z)
 =\frac{T\bar f c_{d,\alpha}\omega_{d-1}}{1-\alpha}
        \varepsilon^{1-\alpha},\quad
 J_T^\varepsilon\downarrow0\quad\text{a.s.}
\]
Couple cutoffs $\varepsilon'<\varepsilon$ synchronously, and let
\[
 D_t:=\max_{i\le N}\sup_{u\le t}
 d_{\mathbf T^d}(X_u^{i,N,\varepsilon},X_u^{i,N,\varepsilon'}),\quad
 A_t:=\int_0^t|\mu_r^{N,\varepsilon}(f)
                    -\mu_r^{N,\varepsilon'}(f)|\dd r.
\]
Let $V_t$ be the disagreement variation of their acceptance labels
on $|z|>\varepsilon$. The omitted marks have total size at most
$J_T^\varepsilon$, so synchronous coupling gives
\[
 D_t\le J_T^\varepsilon+V_t+C\int_0^tD_r\dd r,\qquad
 A_t\le\|f\|_{\mathrm{Lip}}\int_0^tD_r\dd r.
\]
Hence
\begin{equation}\label{mf:cutoff-comparison}
 D_t+A_t\le C_T(J_T^\varepsilon+V_t),\qquad t\le T,
\end{equation}
and
\[
 \sup_{u\le t}|L_u^{N,\varepsilon}-L_u^{N,\varepsilon'}|
 \le J_T^\varepsilon+V_t.
\]
Since $J_T^\varepsilon=O_{\Pp}(\varepsilon^{1-\alpha})$,
lemmas~\ref{mf:strip} and \ref{mf:bootstrap} give, uniformly in
$\varepsilon'<\varepsilon$,
\[
 A_T=O_{\Pp}(\varepsilon^{1-\alpha}),\qquad
 V_T=O_{\Pp}(\varepsilon^{(1-\alpha)/\alpha}),\qquad
 D_T=O_{\Pp}(\varepsilon^{1-\alpha}).
\]
Thus the particle and signal paths are Cauchy in probability for the
uniform metric. For the reference, use
\[
 D_t:=\Ee\left[\sup_{u\le t}
 d_{\mathbf T^d}(\bar X_u^\varepsilon,\bar X_u^{\varepsilon'})
 \Bigm|\cG_T^0\right].
\]
The cutoff flows depend only on past master atoms and independent
private input, so this conditional quantity is $\cG_t^0$-measurable,
bounds $\sup_{u\le t}W_1(\mu_u^\varepsilon,\mu_u^{\varepsilon'})$,
and satisfies the same Gronwall estimate. The reference flows and
signals are therefore Cauchy as well.
Choose a subsequence with summable error probabilities. Its paths
converge uniformly almost surely to adapted càdlàg limits. Lipschitz
continuity lets us pass to the limit in the drift integrals. For each
$\delta>0$, compensation gives
\[
 \Ee\left[\int_{[0,T]\times\{|z|>\delta\}\times[0,\bar f]}
 \ind_{\{u=\mu_{t-}(f)\}}\mathcal N^0(\dd t,\dd z,\dd u)\right]=0,
\]
and the same holds for the empirical threshold. Hence no candidate
above $\delta$ lies on the limiting threshold. There are finitely many
such candidates, and uniform convergence makes all their acceptance
labels agree eventually. Marks below $\delta$ contribute at most
$J_T^\delta\to0$, which identifies the limiting jump sums.
To identify the conditional law, construct every cutoff copy from the
same independent initial state and Brownian motion. Boundedness of the
torus distance and the conditional coupling bound imply that these
copies are Cauchy in $L^1$ for the uniform distance. Their limit
$\bar X$ has an adapted càdlàg version. For bounded Lipschitz $\phi$
and fixed $t$,
\[
 \Ee\left[\left|\Ee\left[\phi(\bar X_t^\varepsilon)-\phi(\bar X_t)
                   \Bigm|\cG_t^0\right]\right|\right]
 \le\Ee\left[|\phi(\bar X_t^\varepsilon)-\phi(\bar X_t)|\right]\longrightarrow0.
\]
Also $\mu_t^\varepsilon(\phi)\to\mu_t(\phi)$ in $L^1$, proving
$\mu_t=\Law(\bar X_t\mid\cG_t^0)$. Independence of future master
increments gives the same kernel when conditioning on $\cG_T^0$.
A countable determining class, rational times, and càdlàg versions
make this identity simultaneous in $t$. This constructs solutions,
and the Cauchy property gives convergence of the full cutoff family.
For two solutions with identical input, the comparison becomes
$D_t+A_t\le C_TV_t$. At
$\tau_a:=\inf\{t:A_t\ge a\}\wedge T$, continuity implies
\[
 \Pp(A_T>a)
 \le\Pp(V_{\tau_a}>a/(2C_T),\ A_{\tau_a}\le a)
 \overset{\eqref{mf:strip-estimate}}{\le} C_T'a^{1-\alpha}.
\]
Let $a\downarrow0$ to obtain $A_T=0$ almost surely. Compensation
then eliminates all disagreement marks above every $\delta>0$.
Letting $\delta\downarrow0$ gives $V_T=D_T=0$. The reference-law
comparison gives uniqueness in the same way.
\end{proof}
\begin{proof}[Proof of Theorem~\ref{mf:clt}]
Since $L$ is $\cG_T^0$-measurable and independent of the sample,
Lemma~\ref{mf:prescribed} gives almost sure convergence of the conditional
laws of $\widetilde\eta^N:=\sqrt N(\widetilde\mu^N-\mu)$. Set
\[
 R_N:=\int_0^T|\widetilde\mu_r^N(f)-\mu_r(f)|\dd r,
 \qquad
 A_t^N:=\int_0^t|\mu_{r-}^N(f)-\mu_{r-}(f)|\dd r,
\]
and let $V^N$ be the disagreement variation for $L^N,L$.
For $p\ge2$, conditional Jensen gives
\begin{equation}\label{mf:sampling-rate}
 \begin{aligned}
 \Ee\left[R_N^p\Bigm|\cG_T^0\right]
 &\le T^{p-1}\|f\|_{H^r}^p\int_0^T
 \Ee\left[\|\widetilde\mu_u^N-\mu_u\|_{H^{-r}}^p
                    \Bigm|\cG_T^0\right]\dd u\\
 &\overset{\eqref{mf:sampling-moments}}{\le}C_{p,T}N^{-p/2},
 \qquad r:=s-2.
 \end{aligned}
\end{equation}
Synchronous coupling, with
\[
 D_t^N:=\frac1N\sum_i\sup_{u\le t}
 d_{\mathbf T^d}(X_u^{i,N},\widetilde X_u^{i,N}),
\]
gives
\[
 D_t^N\le V_t^N+C\int_0^tD_r^N\dd r,
 \qquad
 A_t^N\le R_N+\|f\|_{\mathrm{Lip}}\int_0^tD_r^N\dd r.
\]
Thus $D_t^N\le C_TV_t^N$, $A_t^N\le R_N+C_TV_t^N$, and
Lemma~\ref{mf:bootstrap} with $\varepsilon_N:=N^{-1/2}$ gives
\begin{equation}\label{mf:strip-rate}
 A_T^N=O_{\Pp}(N^{-1/2}),\qquad
 V_T^N=O_{\Pp}(N^{-1/(2\alpha)}).
\end{equation}
This proves \eqref{mf:signal-rate}. Moreover,
\begin{equation}\label{mf:field-equivalence}
 \sup_{t\le T}\|\eta_t^N-\widetilde\eta_t^N\|_{H^{-s}}
 \overset{\eqref{mf:dirac-bound}}{\le}C_s\sqrt N D_T^N
 \le C_{s,T}\sqrt N V_T^N
 \overset{\eqref{mf:strip-rate}}{=}O_{\Pp}(N^{(\alpha-1)/(2\alpha)})
 \xrightarrow[N\to\infty]{}0.
\end{equation}
Let $\widetilde K_N:=\Law(\widetilde\eta^N\mid\cG_T^0)$ and choose
a bounded compatible $J_1$ metric $d$ dominated by truncated uniform
distance. Conditional coupling gives
\[
 d_{\mathrm{BL}}(K_N,\widetilde K_N)
 \le\Ee\left[d(\eta^N,\widetilde\eta^N)\Bigm|\cG_T^0\right].
\]
The right side tends to zero in $L^1$ by
\eqref{mf:field-equivalence}. Since
$d_{\mathrm{BL}}(\widetilde K_N,K)\xrightarrow[N\to\infty]{}0$ almost surely, the triangle
inequality proves \eqref{mf:kernel-convergence} and
\eqref{mf:stable-clt}. Write $\zeta$ for the co-moving Gaussian limit and set $\eta_t:=\trans{L_t}{\zeta_t}$. For smooth $\phi$,
\[
 \|\phi(\cdot+z)-\phi\|_{H^s}
 \le C|z|\|\phi\|_{H^{s+1}},
\]
so bounded $H^{-s}$ paths and $\sum_{t\le T}|\Delta L_t|<\infty$
give absolute convergence of the jump sum.
Fix the master and truncate the accepted jumps of $L$, without
recomputing labels, to obtain $L^{[\delta]}$. Writing $\operatorname{Var}_{[0,T]}$ for total variation,
\begin{equation}\label{mf:accepted-truncation}
 \|L-L^{[\delta]}\|_\infty
 \le\operatorname{Var}_{[0,T]}(L-L^{[\delta]})
 =\sum_{t\le T:\,|\Delta L_t|\le\delta}|\Delta L_t|
 \xrightarrow[\delta\downarrow0]{}0.
\end{equation}
Apply the product rule to $\phi(\cdot+L_t^{[\delta]})$ in the
continuous co-moving equation. Uniform $H^{s+2}$ convergence of
these tests and conditional isometry let us pass to the limit in the drift
and martingale terms. For $a,b,z\in\R^d$,
\[
 \|\phi(\cdot+a+z)-\phi(\cdot+a)
       -\phi(\cdot+b+z)+\phi(\cdot+b)\|_{H^s}
 \le C|z||a-b|\|\phi\|_{H^{s+2}}.
\]
The errors from retained and omitted jumps are therefore bounded by
\[
 \begin{aligned}
 C_\phi\sup_{t\le T}\|\zeta_t\|_{H^{-s}}
 \bigl[&\|L-L^{[\delta]}\|_\infty\operatorname{Var}_{[0,T]}(L)
       +\operatorname{Var}_{[0,T]}(L-L^{[\delta]})\bigr]
 \underset{\delta\downarrow0}{\overset{\eqref{mf:accepted-truncation}}{\longrightarrow}}0.
 \end{aligned}
\]
This proves \eqref{mf:limit-equation}, with
\[
 M_t(\phi):=\int_0^t
       \dd\widehat M_r\bigl(\phi(\cdot+L_{r-})\bigr),
\]
where $\widehat M$ is the co-moving martingale. Conditional isometry
gives \eqref{mf:bracket}. Conversely, testing any solution against
$\phi(\cdot-L_t^{[\delta]})$ cancels retained jumps exactly and
bounds the omitted terms by
\[
 C_\phi\sup_{t\le T}\|\eta_t\|_{H^{-s}}
       \operatorname{Var}_{[0,T]}(L-L^{[\delta]})\xrightarrow[\delta\downarrow0]{}0.
\]
The same test convergence and conditional isometry show that
$\trans{-L}{\eta}$ solves the continuous co-moving equation.
Backward duality \eqref{mf:backward} gives uniqueness for the prescribed
joint conditional law of $(\eta_0,M)$.
\end{proof}
For $t\in[0,T]$ and $\phi\in C^\infty(\mathbf T^d)$, let
$\widehat v^{t,\phi}$ solve \eqref{mf:backward} with terminal value
$\phi(\cdot+L_t)$. For $0\le r\le t$, define
\[
 v_r^{t,\phi}:\left|\begin{aligned}
 \mathbf T^d&\longrightarrow \R\\
 x&\longmapsto \widehat v_r^{t,\phi}(x-L_r).
 \end{aligned}\right.
\]
Thus $v_t^{t,\phi}=\phi$ and
$v_{r-}^{t,\phi}(x)=v_r^{t,\phi}(x+z)$ at accepted atoms.
For $t,u\in[0,T]$ and $\phi,\psi\in C^\infty(\mathbf T^d)$,
backward duality and conditional It\^o isometry give
\begin{equation}
 \operatorname{Cov}(\eta_t(\phi),\eta_u(\psi)\mid\cG_T^0)
 =\mu_0(v_0^{t,\phi}v_0^{u,\psi})
   -\mu_0(v_0^{t,\phi})\mu_0(v_0^{u,\psi})
 \label{mf:covariance}
 +\int_0^{t\wedge u}
 \mu_r\bigl((\nabla v_r^{t,\phi})^\top
                  a\nabla v_r^{u,\psi}\bigr)\dd r.
\end{equation}
Here $\mu_r=\trans{L_r}{\nu_r}$, and the backward tests are
deterministic under the conditional law with the full master fixed.
\begin{remark}[Finite cutoff and the absence of an intensity derivative]
\label{mf:cutoff-remark}
At fixed cutoff $\varepsilon>0$, the conditional-kernel CLT holds
almost surely for $0<\alpha<2$. Compensation gives
$\Delta_k:=|u_k-\mu_{\tau_k-}^\varepsilon(f)|>0$ for the finitely
many candidates $(\tau_k,z_k,u_k)$. The two empirical systems agree until
their first differing label, so \eqref{mf:sampling-moments} gives
\[
 \Pp(\text{a differing label before }T\mid\cG_T^0)
 \overset{\eqref{mf:sampling-moments}}{\le}
 \frac{C_T}{N}\sum_k\Delta_k^{-2}\xrightarrow[N\to\infty]{}0
 \quad\text{almost surely}.
\]
The cutoff acceptance map is locally constant almost surely.
Without cutoff, \eqref{mf:strip-rate} gives a negligible weighted
discrepancy for $\alpha<1$. Neither argument differentiates the
averaged acceptance compensator.
\end{remark}
\section{Rank coupling and microscopic transfer}\label{rk:section}
Matching jumps by tail rank identifies the common-signal error and its
contribution to empirical fluctuations.
\subsection{An inverse-tail criterion and the rank trichotomy}
\label{rk:trichotomy-section}
Fix $0<\alpha<2$ for this subsection. Let $R$ be a nonnegative random
variable, independent of $e$ with law
$\nu_e:=(\delta_{-1}+\delta_1)/2$, and set $U:=eR$. Assume
\begin{equation}\label{rk:hall}
 \overline F(x):=\Pp(R>x)
 =cx^{-\alpha}\{1+dx^{-\rho}+o(x^{-\rho})\},
 \qquad c>0,\quad \rho>0,\quad d\in\R.
\end{equation}
Define the generalized tail inverse by
\[
 Q:\left|\begin{aligned}
 (0,\infty)&\longrightarrow [0,\infty)\\
 v&\longmapsto 
 \begin{cases}
 \inf\{x\ge0:\overline F(x)\le v\},&0<v<1,\\
 0,&v\ge1.
 \end{cases}
 \end{aligned}\right.
\]
For $N\in\N$, define
\begin{equation}\label{rk:maps}
 g_N:\left|\begin{aligned}
 (0,\infty)&\longrightarrow [0,\infty)\\
 y&\longmapsto N^{-1/\alpha}Q(y/N),
 \end{aligned}\right.\qquad
 g:\left|\begin{aligned}
 (0,\infty)&\longrightarrow (0,\infty)\\
 y&\longmapsto (c/y)^{1/\alpha},
 \end{aligned}\right.\qquad
 h:\left|\begin{aligned}
 (0,\infty)&\longrightarrow \R\\
 y&\longmapsto \dfrac d\alpha(c/y)^{(1-\rho)/\alpha}.
 \end{aligned}\right.
\end{equation}
Let $\Pi$ be a Poisson measure on
$[0,\infty)\times(0,\infty)\times\{-1,1\}$ with intensity
$\dd t\,\dd y\,\nu_e(\dd e)$, and let
$\widetilde\Pi:=\Pi-\dd t\,\dd y\,\nu_e(\dd e)$.
For every finite-valued Borel function $a:(0,\infty)\to\R$ with
$\int_1^\infty a(y)^2\dd y<\infty$, define the càdlàg process
$I(a)\in D([0,T];\R)$ by
\[
 I_t(a):={}\int_{(0,t]\times(0,1]\times\{-1,1\}}
            ea(y)\,\Pi(\dd v,\dd y,\dd e) +\underset{M\to\infty}{L^2\text{-}\lim}
     \int_{(0,t]\times(1,M]\times\{-1,1\}}
            ea(y)\,\widetilde\Pi(\dd v,\dd y,\dd e).
\]
The first integral is a finite sum. Doob's inequality makes the second
Cauchy for the norm $(\Ee\left[\sup_{t\le T}|\cdot|^2\right])^{1/2}$, so its limit
has a càdlàg version. The fixed cutoff makes $I$ linear.
Set $J^N:=I(g_N)$ and $S:=I(g)$. For every bounded Borel
$\varphi:\R\to\R$,
\[
 \int_0^N\int_{\{-1,1\}}\varphi(eg_N(y))\nu_e(\dd e)\dd y
 =N\Ee\left[\varphi(N^{-1/\alpha}U)\right].
\]
Thus $J^N$ is a rate-$N$ compound Poisson process, allowing zero marks,
and $S$ is symmetric $\alpha$-stable. For $t\ge0$, $u\in\R$, and $x>0$,
\[
 \nu_S(|z|>x)=cx^{-\alpha},\qquad
 \log\Ee\left[e^{iuS_t}\right]=-tA_\alpha|u|^\alpha,\qquad A_\alpha:=c\alpha\int_0^\infty(1-\cos z)z^{-1-\alpha}\dd z\in(0,\infty).
\]
\begin{assumption}[Inverse-tail control]\label{rk:inverse}
Put $r_N:=N^{-\rho/\alpha}$ and $\rho_c:=1-\alpha/2$.
Assume $r_N^{-1}(g_N-g)\to h$ uniformly on each compact subset of
$(0,\infty)$, together with the condition corresponding to $\rho$:
\begin{enumerate}[label=(\roman*)]
\item If $\rho<\rho_c$, for every $R>0$,
\begin{equation}\label{rk:inverse-l2}
 \int_R^\infty|r_N^{-1}(g_N-g)-h|^2\dd y\xrightarrow[N\to\infty]{}0.
\end{equation}
\item If $\rho=\rho_c$, put $q_N:=r_N\sqrt{\log N}$ and
$\sigma_c^2:=cd^2/\alpha^2$. For every $R>0$ and $\varepsilon>0$,
\begin{align}
 q_N^{-2}\int_R^\infty(g_N-g)^2\dd y&\xrightarrow[N\to\infty]{}\sigma_c^2,
 \label{rk:critical-bracket}\\
 q_N^{-2}\int_R^\infty(g_N-g)^2
  \ind_{\{|g_N-g|>\varepsilon q_N\}}\dd y&\xrightarrow[N\to\infty]{}0.
 \label{rk:critical-lindeberg}
\end{align}
\item If $\rho>\rho_c$, the Borel function
\[
 k:\left|\begin{aligned}
 (0,\infty)&\longrightarrow \R\\
 v&\longmapsto Q(v)-(c/v)^{1/\alpha}
 \end{aligned}\right.
\]
belongs to $L^2((0,\infty),\dd v)$.
\end{enumerate}
\end{assumption}
\begin{lemma}[A sufficient Hall condition]\label{rk:hall-sufficient}
Suppose $\overline F$ is continuous and strictly decreasing for all
sufficiently large arguments and, for some $\epsilon>0$,
\begin{equation}\label{rk:strong-hall}
 \overline F(x)=cx^{-\alpha}
       \{1+dx^{-\rho}+O(x^{-\rho-\epsilon})\}.
\end{equation}
Then Assumption~\ref{rk:inverse} holds. More precisely, with
$\eta:=\min(\rho,\epsilon)$,
\begin{equation}\label{rk:inverse-expansion}
 Q(v)=(c/v)^{1/\alpha}
   +\frac d\alpha(c/v)^{(1-\rho)/\alpha}
   +O\big((c/v)^{(1-\rho-\eta)/\alpha}\big),\qquad v\downarrow0.
\end{equation}
\end{lemma}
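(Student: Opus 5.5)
First I would invert the tail expansion, and then I would check the three cases of Assumption~\ref{rk:inverse} using the resulting remainder bound.

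\emph{Step 1: inverse expansion.} For large $x$ the tail $\overline F$ is continuous and strictly decreasing, so for small $v$ we have $Q(v)=\overline F^{-1}(v)$ exactly, that is, $\overline F(Q(v))=v$. Write $Q(v)=(c/v)^{1/\alpha}(1+w)$ with $w=w(v)\to0$. Substituting into \eqref{rk:strong-hall} gives
\[
 (1+w)^{-\alpha}\{1+dQ^{-\rho}+O(Q^{-\rho-\epsilon})\}=1 .
\]
Since $Q\asymp(c/v)^{1/\alpha}$, taking logarithms yields $-\alpha w+O(w^2)+dQ^{-\rho}+O(Q^{-\rho-\epsilon})=0$. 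A first pass shows $w=O((v/c)^{\rho/\alpha})$. Feeding this back gives
\[
 w=\frac d\alpha(v/c)^{\rho/\alpha}+O\big((v/c)^{(\rho+\eta)/\alpha}\big),
\]
where $\eta=\min(\rho,\epsilon)$: the $w^2$ term contributes order $2\rho$, and replacing $Q^{-\rho}$ by $(c/v)^{-\rho/\alpha}$ costs an extra factor $w$. Multiplying by $(c/v)^{1/\alpha}$ gives \eqref{rk:inverse-expansion}.

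\emph{Step 2: scaling.} Put $v=y/N$. Then $N^{-1/\alpha}(c N/y)^{1/\alpha}=g(y)$, and the second term becomes
\[
 N^{-1/\alpha}\frac d\alpha(cN/y)^{(1-\rho)/\alpha}=r_N h(y).
\]
The remainder is $N^{-(\rho+\eta)/\alpha}O(y^{-(1-\rho-\eta)/\alpha})$, valid for $y\le v_0N$. Hence
\[
 r_N^{-1}(g_N-g)-h=O\big(N^{-\eta/\alpha}y^{-(1-\rho-\eta)/\alpha}\big)\quad\text{on }y\le v_0N,
\]
which gives uniform convergence on compacts. For $y\ge N$ we have $g_N=0$. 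On $v_0N<y<N$, the quantile $Q$ is bounded by $Q(v_0)$, so $|g_N-g|\le CN^{-1/\alpha}$ there.

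\emph{Step 3: the three regimes.} This is the main bookkeeping.

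For (i), the $L^2(R,\infty)$ norm of the main remainder is $N^{-2\eta/\alpha}\int_R^{v_0N}y^{-2(1-\rho-\eta)/\alpha}\dd y$. If the exponent exceeds $1$, this is $O(N^{-2\eta/\alpha})$. Otherwise it is $O(N^{-2\eta/\alpha}N^{1-2(1-\rho-\eta)/\alpha})=O(N^{1-2(1-\rho)/\alpha})$ up to logs, which tends to zero because $\rho<\rho_c$. If necessary I would shrink $\eta$, which only weakens the bound. The tail range $y>v_0N$ contributes:
\begin{itemize}[label={}]
\item $r_N^{-2}\int_{v_0N}^\infty (g_N-g-r_Nh)^2\dd y$, where $g$ and $h$ are square integrable there with $\int_{v_0N}^\infty g^2\asymp N^{1-2/\alpha}$;
\item after dividing by $r_N^2=N^{-2\rho/\alpha}$, this gives $N^{1-2(1-\rho)/\alpha}\to0$.
\end{itemize}
The term $h^2$ on $y>v_0N$ is handled the same way, since $h\in L^2(1,\infty)$ exactly when $\rho<\rho_c$.

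For (ii), $\rho=\rho_c$ gives $h^2=(d/\alpha)^2c^{2(1-\rho)/\alpha}y^{-1}$ with $2(1-\rho)/\alpha=1$. Therefore $r_N^2\int_R^{v_0N}h^2=r_N^2\sigma_c^2(\log N+O(1))$. The remainder and the $y>v_0N$ range contribute $O(r_N^2)$ by the previous estimates, and the cross term is $o(r_N^2\log N)$ by Cauchy--Schwarz, which proves \eqref{rk:critical-bracket}. For Lindeberg \eqref{rk:critical-lindeberg}, note that $|g_N-g|\le Cr_N(|h|+1)$ and that $|h|\le C$ on $y\ge R$. Hence $|g_N-g|/q_N\le C/\sqrt{\log N}\to0$, so the indicator vanishes for large $N$.

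For (iii), $k(v)=O(v^{-(1-\rho)/\alpha})$ near $0$, and $(1-\rho)/\alpha<1/2$, so $k$ is square integrable near $0$. Also $k$ is bounded on $[v_0,1]$. For $v\ge1$ we have $k=-(c/v)^{1/\alpha}$, which is in $L^2$ at infinity since $2/\alpha>1$.

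I expect the only delicate point to be in Step 1: the first-pass a priori bound $w\to0$ relies on monotonicity, and the error exponent must be tracked so that $\eta=\min(\rho,\epsilon)$ appears.
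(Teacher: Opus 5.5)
Your proposal is correct and follows essentially the same route as the paper. You invert via $\overline F(Q(v))=v$ with a two-pass absorption to get relative error $O\bigl((v/c)^{(\rho+\eta)/\alpha}\bigr)$, rescale to $|r_N^{-1}(g_N-g)-h|\le CN^{-\eta/\alpha}y^{-(1-\rho-\eta)/\alpha}$ on $y\le v_0N$, split the ranks at $v_0N$, and sort the cases by whether $2(1-\rho)/\alpha$ exceeds, equals, or falls below $1$. The only differences are cosmetic: you linearize through logarithms, and you bound the critical cross term by Cauchy--Schwarz, giving $o(r_N^2\log N)$, where the paper bounds it directly by $O_R(r_N^2)$; both suffice.
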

\begin{proof}
Set $x_v:=(c/v)^{1/\alpha}$ and $\delta_v:=Q(v)/x_v-1$.
Regular variation gives $\delta_v\to0$. Eventual continuity and strict
monotonicity give $\overline F(Q(v))=v$ for small $v$, hence
\[
 1\overset{\eqref{rk:strong-hall}}{=}(1+\delta_v)^{-\alpha}
 \left\{1+dx_v^{-\rho}(1+\delta_v)^{-\rho}
           +O(x_v^{-\rho-\epsilon})\right\},\qquad 0=-\alpha\delta_v+dx_v^{-\rho}
    +O(\delta_v^2+x_v^{-\rho}|\delta_v|+x_v^{-\rho-\epsilon}).
\]
Absorbing the terms containing $|\delta_v|$ first gives
$\delta_v=O(x_v^{-\rho})$. Substitution then yields
\[
 \delta_v-\frac d\alpha x_v^{-\rho}
 =O(x_v^{-2\rho}+x_v^{-\rho-\epsilon})
 =O(x_v^{-\rho-\eta}),
\]
which proves \eqref{rk:inverse-expansion}. For some $v_0\in(0,1)$ and
$R\le y\le Nv_0$,
\begin{equation}\label{rk:inverse-remainder}
 |r_N^{-1}(g_N-g)(y)-h(y)|
 \overset{\eqref{rk:inverse-expansion}}{\le}
 C N^{-\eta/\alpha}y^{-(1-\rho-\eta)/\alpha}.
\end{equation}
Write $k(v):=Q(v)-(c/v)^{1/\alpha}$ in all three regimes and put
$a:=2(1-\rho)/\alpha$. If $a>1$,
\[
 N^{-2\eta/\alpha}\int_R^{Nv_0}y^{-a+2\eta/\alpha}\dd y
 =\begin{cases}
 O_R(N^{-2\eta/\alpha}),&-a+2\eta/\alpha<-1,\\
 O_R(N^{-2\eta/\alpha}\log N),&-a+2\eta/\alpha=-1,\\
 O_R(N^{1-a}),&-a+2\eta/\alpha>-1
 \end{cases}
 \xrightarrow[N\to\infty]{}0.
\]
On the remaining ranks, $y=Nv$ gives
\[
 \int_{Nv_0}^\infty|r_N^{-1}(g_N-g)|^2\dd y
 =N^{1-a}\int_{v_0}^\infty k(v)^2\dd v\xrightarrow[N\to\infty]{}0.
\]
The integral is finite since $Q$ is bounded on $[v_0,1]$ and zero
afterward. Also $\int_{Nv_0}^\infty h^2\xrightarrow[N\to\infty]{}0$, proving
\eqref{rk:inverse-l2}. If $a=1$, then $h(y)=(d\sqrt c/\alpha)y^{-1/2}$, and
\begin{align*}
 \int_R^{Nv_0}|h(y)|\,|r_N^{-1}(g_N-g)(y)-h(y)|\dd y
 &\overset{\eqref{rk:inverse-remainder}}{\le}
 C N^{-\eta/\alpha}\int_R^{Nv_0}y^{-1+\eta/\alpha}\dd y=O_R(1),\\
 \int_R^{Nv_0}|r_N^{-1}(g_N-g)(y)-h(y)|^2\dd y
 &\overset{\eqref{rk:inverse-remainder}}{\le}
 C N^{-2\eta/\alpha}\int_R^{Nv_0}y^{-1+2\eta/\alpha}\dd y=O_R(1).
\end{align*}
Consequently,
\[
 r_N^{-2}\int_R^{Nv_0}(g_N-g)^2\dd y
 =\frac{cd^2}{\alpha^2}\log(Nv_0/R)+O_R(1)
 =\sigma_c^2\log N+O_R(1).
\]
The remaining integral is $O(r_N^2)$, and
\[
 \sup_{y\ge R}\frac{|g_N(y)-g(y)|}{q_N}
 =O_R((\log N)^{-1/2})\longrightarrow0.
\]
This proves \eqref{rk:critical-bracket}. The Lindeberg indicator in
\eqref{rk:critical-lindeberg} eventually vanishes. The argument includes
$d=0$.
If $a<1$, then \eqref{rk:inverse-expansion} gives
$k(v)^2=O(v^{-a})$ near zero. Together with the preceding tail bound,
this yields $k\in L^2(0,\infty)$.
\end{proof}
\begin{theorem}[Rank tangent]\label{rk:tangent}
Under Assumption~\ref{rk:inverse}, in particular under the hypotheses of
Lemma~\ref{rk:hall-sufficient}, the following limits hold jointly with $S$ in
$D([0,T];\R^2)$ with the vector $J_1$ topology.
\begin{enumerate}[label=(\roman*)]
\item If $0<\rho<\rho_c$, let $Z:=I(h)$. Then
\begin{equation}\label{rk:stable-tangent}
 \sup_{t\le T}|r_N^{-1}(J_t^N-S_t)-Z_t|
          \xrightarrow[N\to\infty]{\Pp}0.
\end{equation}
For $d\ne0$, $Z$ is a nondegenerate symmetric stable process with index
$\kappa:=\alpha/(1-\rho)\in(\alpha,2)$ and Lévy measure
\[
 \nu_Z(\dd z)=\frac{c\kappa}{2}|d/\alpha|^\kappa
                    |z|^{-1-\kappa}\dd z.
\]
It shares the Poisson atoms of $S$. If $d=0$, $Z=0$ at this scale.
\item If $\rho=\rho_c$, then
\[
 (S,q_N^{-1}(J^N-S))\weak(S,\sigma_c B^{\rm rk}),
\]
where $B^{\rm rk}$ is standard Brownian motion independent of $S$.
\item If $\rho>\rho_c$, then
\[
 (S,N^{1/\alpha-1/2}(J^N-S))\weak(S,\sigma_Q B^{\rm rk}),
 \qquad \sigma_Q^2:=\int_0^\infty k(v)^2\dd v.
\]
\end{enumerate}
The last variance depends on the full quantile. In all three regimes,
$\|J^N-S\|_\infty=O_{\Pp}(b_N)$, where
\begin{equation}\label{rk:rate}
 b_N:=
 \begin{cases}
 N^{-\rho/\alpha},&\rho<1-\alpha/2,\\
 N^{-(2-\alpha)/(2\alpha)}\sqrt{1+\log N},&\rho=1-\alpha/2,\\
 N^{-(2-\alpha)/(2\alpha)},&\rho>1-\alpha/2.
 \end{cases}
\end{equation}
\end{theorem}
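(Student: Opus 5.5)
By the linearity of $I$ noted after its definition, $J^N-S=I(g_N-g)$. The plan is to split this integral by rank level and treat each piece separately. Fix $0<\delta<1<R$ and write $g_N-g=(g_N-g)\ind_{(0,\delta)}+(g_N-g)\ind_{[\delta,R]}+(g_N-g)\ind_{(R,\infty)}$.

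\emph{Small ranks.} The event that $\Pi$ has an atom in $(0,T]\times(0,\delta)$ has probability $1-e^{-T\delta}$, uniformly in $N$, so this piece is negligible once $\delta\downarrow0$.

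\emph{Middle ranks.} On $[\delta,R]$ there are almost surely finitely many atoms, so the piece is a finite sum. Uniform convergence of $r_N^{-1}(g_N-g)\to h$ on the compact set $[\delta,R]$ then gives pathwise uniform convergence to $I(h\ind_{[\delta,R]})$. This piece is always $O_{\Pp}(r_N)$, and $r_N=o(q_N)$ and $r_N=o(N^{1/2-1/\alpha})$ in regimes (ii) and (iii).

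\emph{Large ranks.} Since $e$ is symmetric, the compensator of any rank strip vanishes, so the $(R,\infty)$ piece is a square-integrable pure-jump martingale. Doob's inequality gives
\[
\Ee\Bigl[\sup_{t\le T}|I_t(a\ind_{(R,\infty)})|^2\Bigr]\le4T\int_R^\infty a^2\dd y .
\]

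\emph{Regime (i).} Apply the Doob bound with $a=r_N^{-1}(g_N-g)-h$. Then \eqref{rk:inverse-l2} kills the large-rank piece for each fixed $R$, and \eqref{rk:stable-tangent} follows with $Z=I(h)$. This is legitimate because $\int_1^\infty h^2<\infty$ exactly when $2(1-\rho)/\alpha>1$, that is, $\rho<\rho_c$. Since $Z$ and $S$ are functionals of the same $\Pi$, joint $J_1$ convergence is immediate and $Z$ shares the atoms of $S$.

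To identify $\nu_Z$, push the intensity $\dd y$ forward under $y\mapsto|h(y)|$:
\[
\bigl|\{y:|h(y)|>x\}\bigr|=c\,|d/\alpha|^{\kappa}x^{-\kappa},\qquad \kappa=\alpha/(1-\rho).
\]
Splitting symmetrically via $e$ and differentiating gives the stated density $\tfrac{c\kappa}{2}|d/\alpha|^\kappa|z|^{-1-\kappa}$, which is nondegenerate for $d\ne0$. The bound $\kappa\in(\alpha,2)$ follows from $0<\rho<\rho_c$.

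\emph{Regimes (ii) and (iii).} Let $R=R_N\to\infty$ slowly, chosen by a diagonal argument so that the middle-rank error stays negligible. The large-rank part is then a pure-jump martingale $M^N$, and I would apply the martingale functional CLT of Ethier--Kurtz and Rebolledo.

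In (ii), the predictable bracket of $q_N^{-1}M^N$ is $t\,q_N^{-2}\int_{R_N}^\infty(g_N-g)^2\dd y\to\sigma_c^2t$ by \eqref{rk:critical-bracket}. The Lindeberg condition for its jumps is \eqref{rk:critical-lindeberg}.

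In (iii), substitute $y=Nv$, so that $(g_N-g)(y)=N^{-1/\alpha}k(y/N)$. The scaled jumps are then $N^{-1/2}k(v)$, and the bracket is $t\int_{R_N/N}^\infty k^2\dd v\to t\sigma_Q^2$. Lindeberg reduces to $\int k^2\ind_{\{|k|>\varepsilon\sqrt N\}}\dd v\to0$, which holds by dominated convergence. In this regime even the ranks $y\le R_N$ contribute only $\int_0^{R_N/N}k^2\to0$, once the $y<\delta$ atoms are discarded as before.

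\emph{Independence from $S$.} Decompose $S=I(g\ind_{(0,R_N]})+I(g\ind_{(R_N,\infty)})$. The second term tends to $0$ in $L^2$ by Doob's inequality, since $\int_R^\infty g^2\dd y=O(R^{1-2/\alpha})$. The first term is a functional of $\Pi$ restricted to ranks at most $R_N$, which is independent of $M^N$. Hence the joint characteristic functionals factor in the limit, giving $(S,\sigma B^{\rm rk})$ with $B^{\rm rk}$ independent of $S$.

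\emph{Rate.} The rate $b_N$ in \eqref{rk:rate} is read off from the three scalings: $r_N$ in (i), $q_N$ in (ii), and $N^{1/2-1/\alpha}$ in (iii).

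The main obstacle is the bookkeeping in (ii) and (iii). The cutoff $R_N$ must grow fast enough to decouple the martingale from $S$, yet slowly enough that the middle-rank error, controlled only uniformly on fixed compacts, remains $o(q_N)$. I would handle this with a diagonal choice of $R_N$ based on the fixed-$R$ statements of Assumption~\ref{rk:inverse}.
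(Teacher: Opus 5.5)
Your proposal is correct and follows essentially the paper's proof: local uniform convergence at the finitely many low-rank atoms, Doob's $L^2$ bound with \eqref{rk:inverse-l2} at high ranks in (i), and in (ii)--(iii) the martingale functional CLT for the high-rank block, with independence coming from the rank-split piece $S^{\le R}$. The only difference is bookkeeping. The paper keeps $R$ fixed and lets $N\to\infty$ before $R\to\infty$; this suffices because \eqref{rk:critical-bracket}--\eqref{rk:critical-lindeberg} hold for every fixed $R$ with an $R$-independent limit. So your diagonal $R_N$ is not needed, and it would also have to grow slowly enough for the bracket over $(R_N,\infty)$ to retain the full $\sigma_c^2$, a constraint you did not flag.
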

\begin{proof}
In the first regime put $e_N:=r_N^{-1}(g_N-g)-h$.
On $y\le R$, let $(t_j,y_j,e_j)_{j=1}^{K_R}$ be the finitely many
Poisson atoms before $T$. Their minimum rank is positive almost surely,
and symmetry gives
\[
 \sup_{t\le T}|I_t(e_N\ind_{(0,R]})|
 \le\sum_{j=1}^{K_R}|e_N(y_j)|\longrightarrow0
 \quad\text{almost surely}.
\] On $y>R\ge1$, Doob's inequality gives
\begin{align*}
 \Ee\left[\sup_{t\le T}|I_t(e_N\ind_{(R,\infty)})|^2\right]
 &\le4T\int_R^\infty e_N(y)^2\dd y\\
 &=4T\int_R^\infty|r_N^{-1}(g_N-g)(y)-h(y)|^2\dd y
 \xrightarrow[N\to\infty]{}0
\end{align*}
by \eqref{rk:inverse-l2}.
This proves \eqref{rk:stable-tangent}, hence joint uniform convergence
with the unchanged coordinate $S$. Moreover,
\[
 h\in L^2(R,\infty)\quad\Longleftrightarrow\quad
 2(1-\rho)/\alpha>1\qquad(d\ne0),
\]
and, for $z>0$,
\[
 \int_0^\infty\ind_{\{|h(y)|>z\}}\dd y
       =c|d/\alpha|^\kappa z^{-\kappa},
\]
which identifies the Lévy measure. For $u,v\in\R$,
\begin{equation}\label{rk:joint-exponent}
 \log\Ee\left[e^{i(uS_t+vZ_t)}\right]
    =t\int_0^\infty\{\cos(ug(y)+vh(y))-1\}\dd y.
\end{equation}
The integral converges absolutely: $g,h$ are square-integrable at
infinity and the integrand is bounded near zero. At criticality, fix $R\ge1$. The martingale
$M^{N,R}:=q_N^{-1}I((g_N-g)\ind_{(R,\infty)})$ satisfies
\[
 \langle M^{N,R}\rangle_t
   =tq_N^{-2}\int_R^\infty(g_N-g)^2\dd y\xrightarrow[N\to\infty]{} t\sigma_c^2.
\]
Equation \eqref{rk:critical-lindeberg} is its Lindeberg condition, so
the martingale functional CLT applies \citep{EthierKurtz1986,Rebolledo1980}.
For the finite-rank term, local uniform convergence at its finitely many
atoms gives
\[
 \left\|q_N^{-1}I((g_N-g)\ind_{(0,R]})\right\|_\infty
 =\frac{r_N}{q_N}O_{\Pp}(1)=o_{\Pp}(1).
\]
Define $S^{\le R}:=I(g\ind_{(0,R]})$, independent of $M^{N,R}$. Then
\begin{align*}
 \Ee\left[\sup_{t\le T}|S_t-S_t^{\le R}|^2\right]
 &\le4T\int_R^\infty g(y)^2\dd y\\
 &\overset{\eqref{rk:maps}}{=}
 4Tc^{2/\alpha}\int_R^\infty y^{-2/\alpha}\dd y\\
 &=4Tc^{2/\alpha}\frac{R^{1-2/\alpha}}{2/\alpha-1}
 \xrightarrow[R\to\infty]{}0.
\end{align*}
First let $N\to\infty$, then $R\to\infty$. Independence on each rank
split identifies the joint limit as $(S,\sigma_cB^{\rm rk})$ with
independent coordinates. A continuous second coordinate allows the
same $J_1$ time change for both coordinates. In the third regime, writing $y=Nv$ yields
\[
 N^{1/\alpha-1/2}(g_N(Nv)-g(Nv))
 \overset{\eqref{rk:maps}}{=}
 N^{-1/2}\bigl(Q(v)-(c/v)^{1/\alpha}\bigr)=N^{-1/2}k(v).
\]
Thus the normalized error has bracket
\[
 tN^{2/\alpha-1}\int_0^\infty(g_N-g)^2\dd y
 =t\int_0^\infty k(v)^2\dd v=t\sigma_Q^2.
\]
Dominated convergence gives its Lindeberg condition:
\[
 t\int_0^\infty k(v)^2\ind_{\{|k(v)|>\varepsilon\sqrt N\}}\dd v
       \xrightarrow[N\to\infty]{}0.
\]
Since $g_N-g\in L^2(0,\infty)$, symmetry identifies the error with its
fully compensated integral. Local uniform convergence and
$N^{1/\alpha-1/2}r_N\xrightarrow[N\to\infty]{}0$ remove each fixed rank block. The same
truncation proves independence and vector $J_1$ convergence.
\end{proof}
For $\alpha<1$,
\[
 b_N=o(N^{-1/2})\quad\Longleftrightarrow\quad\rho>\alpha/2,
 \qquad \alpha/2<1-\alpha/2.
\]
The sampling threshold therefore lies inside the stable rank regime.
\subsection{Nearly stable microscopic marks}\label{rk:microscopic-section}
Let $0<\alpha<1$ and impose Assumption~\ref{mf:assumptions} in dimension one.
Let $U$ be symmetric, $c:=\nu_\alpha(\{|z|>1\})$, and assume
\begin{equation}\label{rk:tail-bound}
 \Pp(|U|>x)=cx^{-\alpha}\{1+O(x^{-\rho})\},
                 \qquad x\to\infty,
\end{equation}
for some $\rho>0$. Atoms are allowed. Use $Q,g_N,g$ from
\eqref{rk:maps} and the Poisson master $\mathcal P^0$ on
$[0,\infty)\times(0,\infty)\times\{-1,1\}\times[0,\bar f]$ with intensity
\[
 \dd t\,\dd y\,\nu_e(\dd e)\dd u,
                \qquad 0\le u\le\bar f.
\]
The bijection
\[
 \left|\begin{aligned}
 (0,\infty)\times\{-1,1\}&\longrightarrow \R\setminus\{0\}\\
 (y,e)&\longmapsto e(c/y)^{1/\alpha}
 \end{aligned}\right.
\]
has inverse $z\mapsto(c|z|^{-\alpha},\operatorname{sgn}z)$ and pushes
$\dd y\,\nu_e(\dd e)$ to $\nu_\alpha$. It therefore preserves the master
sigma-field $\mathcal G_T^0$.
The image of the restriction to $0<y<N$ under
$(y,e)\mapsto eg_N(y)$ is
\begin{equation}\label{rk:micro-measure}
 \nu_N(\dd z):=N\Law(N^{-1/\alpha}U)(\dd z),
\end{equation}
with zero marks irrelevant. Since $g_N(y)=0$ for $y\ge N$, this
restriction does not change the microscopic signal below.
In \eqref{mf:model}, retain the initial sample, Brownian motions and drift,
and replace the signal by
\begin{equation}\label{rk:micro-signal}
 L_t^{N,\mathrm{mic}}
 :=\int_{(0,t]\times(0,\infty)\times\{-1,1\}\times[0,\bar f]}
 eg_N(y)\ind_{\{u\le\mu_{v-}^{N,\mathrm{mic}}(f)\}}
              \mathcal P^0(\dd v,\dd y,\dd e,\dd u),
\end{equation}
where
$\mu_t^{N,\mathrm{mic}}:=N^{-1}\sum_{i=1}^N\delta_{X_t^{i,N,\mathrm{mic}}}
\in\Prob(\mathbf T)$.
Its accepted rate is $N\mu_{t-}^{N,\mathrm{mic}}(f)$ and its marks have
law $\Law(N^{-1/\alpha}U)$. The stable-master system uses $g$ and its
own threshold. Both signals belong to $D([0,T];\R)$.
\begin{lemma}[Localized quantile estimate]\label{rk:localized}
Under \eqref{rk:tail-bound}, for each $R>1$, with $y_R:=cR^{-\alpha}$,
\begin{equation}\label{rk:localized-estimate}
 \int_{y_R}^\infty|g_N(y)-g(y)|^2\dd y\le C_R b_N^2,
\end{equation}
where $b_N$ is given by \eqref{rk:rate}. In addition, uniformly in $N$,
\begin{equation}\label{rk:uniform-tails}
 \nu_N(|z|>x)\le Cx^{-\alpha},\qquad
 \int_{|z|\le x}|z|\nu_N(\dd z)\le Cx^{1-\alpha},\qquad x>0.
\end{equation}
\end{lemma}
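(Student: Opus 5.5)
The plan is to reduce both estimates to bounds on the tail $\overline F(x):=\Pp(|U|>x)$ and on its generalized inverse $Q$, and then rescale. The quantile estimate \eqref{rk:localized-estimate} uses the substitution $y=Nv$ as in the third regime of Theorem~\ref{rk:tangent}. The uniform tails \eqref{rk:uniform-tails} follow from a global power bound on $\overline F$.

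\emph{Step 1 (quantile sandwich).} First I would turn \eqref{rk:tail-bound} into a pointwise bound on $k(v):=Q(v)-(c/v)^{1/\alpha}$. Atoms are allowed, so $\overline F(Q(v))=v$ may fail. Only the generalized-inverse inequalities are available:
\[
 \overline F(Q(v))\le v\le\overline F(Q(v)-)
\]
for $0<v<1$, where $\overline F(Q(v)-):=\lim_{x\uparrow Q(v)}\overline F(x)$. Choose $x_1$ such that
\[
 c x^{-\alpha}(1-Cx^{-\rho})\le\overline F(x)\le c x^{-\alpha}(1+Cx^{-\rho})
 \qquad\text{for }x\ge x_1 .
\]
Put $x_v:=(c/v)^{1/\alpha}$ and take $v\le v_0$ small. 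Test the points $x_\pm:=x_v(1\pm C'x_v^{-\rho})$. The upper tail bound at $x_+$ gives $\overline F(x_+)\le v$, hence $Q(v)\le x_+$. The lower bound on $[x_1,x_-]$ gives $\overline F(x)>v$ for all $x\le x_-$, hence $Q(v)\ge x_-$. Both conclusions need $C'$ large and $v_0$ small; the left limit is handled by applying the strict inequality just below $x_-$. This yields
\[
 |k(v)|\le C v^{-(1-\rho)/\alpha}\qquad\text{for }0<v\le v_0 .
\]
On $[v_0,1)$ we have $0\le Q\le Q(v_0)$, and $Q=0$ on $[1,\infty)$. Therefore $|k(v)|\le C+(c/v)^{1/\alpha}$ there, which is square-integrable on $[v_0,\infty)$ because $2/\alpha>1$.

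\emph{Step 2 (scaling and three cases).} Since
\[
 g_N(y)-g(y)=N^{-1/\alpha}k(y/N),
\]
we get
\[
 \int_{y_R}^\infty(g_N-g)^2\dd y=N^{1-2/\alpha}\int_{y_R/N}^\infty k(v)^2\dd v .
\]
The part over $v\ge v_0$ is $O(N^{1-2/\alpha})=O(N^{-(2-\alpha)/\alpha})$. For the part over $y_R/N\le v\le v_0$, put $a:=2(1-\rho)/\alpha$ and use Step 1:
\[
 N^{1-2/\alpha}\int_{y_R/N}^{v_0}v^{-a}\dd v .
\]
This is $O(N^{1-2/\alpha})$ if $a<1$, which is $\rho>1-\alpha/2$. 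It is $O(N^{1-2/\alpha}\log N)$ if $a=1$. It is $C_R N^{1-2/\alpha}N^{a-1}=C_RN^{-2\rho/\alpha}$ if $a>1$. In every regime the bound is at most $C_Rb_N^2$ with $b_N$ from \eqref{rk:rate}. The lower cutoff $y_R$ is exactly what keeps the case $a>1$ finite, and the constant's dependence on $R$ enters only through $y_R^{1-a}$.

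\emph{Step 3 (uniform tails).} From \eqref{rk:tail-bound} and $\overline F\le1$ I would get the global bound
\[
 \overline F(x)\le Cx^{-\alpha}\qquad\text{for all }x>0 ,
\]
using $1\le x_1^\alpha x^{-\alpha}$ for $x\le x_1$. Then
\[
 \nu_N(|z|>x)=N\,\overline F(N^{1/\alpha}x)\le Cx^{-\alpha}.
\]
For the first moment, put $X:=N^{1/\alpha}x$. Then
\[
 \int_{|z|\le x}|z|\nu_N(\dd z)=N^{1-1/\alpha}\,\Ee\bigl[|U|\ind_{\{|U|\le X\}}\bigr]\le N^{1-1/\alpha}\int_0^X\overline F(t)\dd t ,
\]
and $\int_0^X\overline F(t)\dd t\le CX^{1-\alpha}/(1-\alpha)$. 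Hence
\[
 \int_{|z|\le x}|z|\nu_N(\dd z)\le Cx^{1-\alpha},
\]
uniformly in $N$; this uses $\alpha<1$.

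The only delicate point is Step 1: deriving the two-sided quantile bound from \eqref{rk:tail-bound} without continuity or strict monotonicity of $\overline F$. Lemma~\ref{rk:hall-sufficient} assumed both, so the sandwich argument via $\overline F(Q(v))\le v\le\overline F(Q(v)-)$ is what replaces them here.
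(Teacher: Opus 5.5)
Your proposal is correct and follows the paper's proof essentially step for step: a two-sided quantile sandwich from \eqref{rk:tail-bound}, then the split at $v_0$ (equivalently $y=Nv_0$) with the three-case power integral, then the global bound $\overline F(x)\le Cx^{-\alpha}$ integrated against $\nu_N$ for \eqref{rk:uniform-tails}. The only difference is that you require a strict inequality $\overline F(x_-)>v$ to force $Q(v)\ge x_-$ when atoms are present, which is a slightly more careful version of the paper's sandwich step.
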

\begin{proof}
For sufficiently large $K$, \eqref{rk:tail-bound} gives
\[
 \overline F\big((c/v)^{1/\alpha}(1+Kv^{\rho/\alpha})\big)
 \le v\le
 \overline F\big((c/v)^{1/\alpha}(1-Kv^{\rho/\alpha})\big)
\]
for small $v>0$. The definition of $Q$ therefore yields
\begin{equation}\label{rk:quantile-bound}
 Q(v)=(c/v)^{1/\alpha}\{1+O(v^{\rho/\alpha})\},\qquad v\downarrow0.
\end{equation}
Choose $v_0\in(0,1)$ on which this bound holds. For $0<y\le Nv_0$,
\[
 |g_N(y)-g(y)|
 =N^{-1/\alpha}\left|Q(y/N)-(cN/y)^{1/\alpha}\right|
 \overset{\eqref{rk:quantile-bound}}{\le}
 C N^{-\rho/\alpha}y^{(\rho-1)/\alpha}.
\]
Since $Q$ is bounded on $[v_0,1]$ and vanishes on $[1,\infty)$,
\begin{align*}
 \int_{y_R}^\infty|g_N-g|^2\dd y
 &\le C N^{-2\rho/\alpha}
          \int_{y_R}^{Nv_0}y^{2(\rho-1)/\alpha}\dd y
       +C N^{1-2/\alpha}\\
 &\le C_R\begin{cases}
 N^{-2\rho/\alpha},&\rho<1-\alpha/2,\\
 N^{1-2/\alpha}(1+\log N),&\rho=1-\alpha/2,\\
 N^{1-2/\alpha},&\rho>1-\alpha/2
 \end{cases}
 \overset{\eqref{rk:rate}}{=}C_R b_N^2,
\end{align*}
for large $N$, proving \eqref{rk:localized-estimate}.
Increase the constant so that the tail bound holds for all positive arguments. Then
\begin{align*}
 \nu_N(|z|>x)
 &\overset{\eqref{rk:micro-measure}}{=}N\Pp(|U|>N^{1/\alpha}x)
 \overset{\eqref{rk:tail-bound}}{\le}Cx^{-\alpha},\\
 \int_{|z|\le x}|z|\nu_N(\dd z)
 &\le\int_0^x\nu_N(|z|>v)\dd v
 \le C\int_0^xv^{-\alpha}\dd v
 =\frac C{1-\alpha}x^{1-\alpha}.
\end{align*}
\end{proof}
\begin{theorem}[Microscopic transfer]\label{rk:transfer}
Suppose that Assumption~\ref{mf:assumptions} holds in dimension one and that
\eqref{rk:tail-bound} holds with $\rho>\alpha/2$. Under the coupling above,
\begin{equation}\label{rk:comparison-rate}
 \sup_{t\le T}W_1(\mu_t^{N,\mathrm{mic}},\mu_t^N)
   +\sup_{t\le T}|L_t^{N,\mathrm{mic}}-L_t^N|
            =O_{\Pp}(b_N)=o_{\Pp}(N^{-1/2}).
\end{equation}
Consequently, with the same limit as in Theorem~\ref{mf:clt},
\begin{equation}\label{rk:micro-clt}
 \sqrt N(\mu^{N,\mathrm{mic}}-\mu)
 \xRightarrow[N\to\infty]{\mathrm{st}(\mathcal G_T^0)}\eta
               \quad\text{in }D([0,T];H^{-s}(\mathbf T))\text{ with }J_1.
\end{equation}
The conditional laws given $\mathcal G_T^0$ converge in probability
to the same Gaussian kernel.
\end{theorem}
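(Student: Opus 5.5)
We compare the microscopic system with the stable-master system $X^{i,N}$ of \eqref{mf:model}, realized on $\mathcal P^0$ through the bijection $(y,e)\mapsto eg(y)$. Both systems use the same initial sample, Brownian motions and candidate labels $u$; they differ only in the marks ($eg_N(y)$ versus $eg(y)$) and in the acceptance thresholds. Put
\[
 D_t:=\frac1N\sum_i\sup_{r\le t}d_{\mathbf T}(X_r^{i,N,\mathrm{mic}},X_r^{i,N}),\qquad
 A_t:=\int_0^t|\mu_r^{N,\mathrm{mic}}(f)-\mu_r^N(f)|\dd r .
\]
Let $V_t$ be the disagreement variation of the acceptance labels, weighted by $|g(y)|$.

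We split the signal difference as
\[
 L^{N,\mathrm{mic}}-L^N=\int e(g_N-g)(y)\ind_{\{u\le\mu^{N,\mathrm{mic}}_{v-}(f)\}}\dd\mathcal P^0
 +\int eg(y)\bigl(\ind_{\{u\le\mu^{N,\mathrm{mic}}_{v-}(f)\}}-\ind_{\{u\le\mu^{N}_{v-}(f)\}}\bigr)\dd\mathcal P^0 .
\]
The second term is bounded pathwise by $V_t$. For the first term $E_t$, we split the ranks at $y_R$.

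On the large ranks $y>y_R$, we write the integral as a martingale plus a compensator. The compensator vanishes by symmetry in $e$. The martingale is bounded via Doob's inequality and \eqref{rk:localized-estimate}: since the thresholds are predictable and at most $\bar f$, its second moment is at most $4T\bar f\,C_Rb_N^2$.

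On the small ranks $y\le y_R$, there are finitely many atoms before $T$. The bound \eqref{rk:quantile-bound} gives $|g_N-g|(y)\le CN^{-\rho/\alpha}y^{(\rho-1)/\alpha}$ once $y\le Nv_0$, which holds eventually. So this part is $O_{\Pp}(N^{-\rho/\alpha})=O_{\Pp}(b_N)$ for each fixed $R$. Note that $b_N\ge N^{-\rho/\alpha}$ in all three regimes.

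Thus $R_N:=\sup_{t\le T}|E_t|=O_{\Pp}(b_N)$. Synchronous coupling, as in the proof of Theorem~\ref{mf:clt}, then gives
\[
 D_t\le R_N+V_t+C\int_0^tD_r\dd r,\qquad A_t\le\|f\|_{\mathrm{Lip}}\int_0^tD_r\dd r,
\]
so $D_T+A_T\le C_T(R_N+V_T)$ with the same bound holding at every $t$. Lemma~\ref{mf:strip} applies to $V$ with the stable mark measure $\nu_\alpha$, since the labels are predictable in the joint filtration. Lemma~\ref{mf:bootstrap} with $\varepsilon_N:=b_N$ then gives $A_T=O_{\Pp}(b_N)$ and $V_T=O_{\Pp}(b_N^{1/\alpha})=o_{\Pp}(b_N)$. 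Hence $D_T$ and the signal difference are $O_{\Pp}(b_N)$. Since $\sup_tW_1(\mu_t^{N,\mathrm{mic}},\mu_t^N)\le D_T$, this gives \eqref{rk:comparison-rate}. The identity $b_N=o(N^{-1/2})$ for $\rho>\alpha/2$ was noted after Theorem~\ref{rk:tangent}.

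For \eqref{rk:micro-clt}, we use \eqref{mf:dirac-bound} to get $\sup_t\|\sqrt N(\mu^{N,\mathrm{mic}}_t-\mu^N_t)\|_{H^{-s}}\le C\sqrt N D_T\to0$ in probability. Then we repeat the conditional coupling argument ending the proof of Theorem~\ref{mf:clt}: with a bounded $J_1$ metric dominated by truncated uniform distance,
\[
 d_{\mathrm{BL}}(K_N^{\mathrm{mic}},K_N)\le\Ee\bigl[d(\eta^{N,\mathrm{mic}},\eta^N)\bigm|\mathcal G_T^0\bigr]\to0\quad\text{in }L^1.
\]
This combines with \eqref{mf:kernel-convergence}. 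Since the bijection preserves $\mathcal G_T^0$, the limiting kernel is unchanged.

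The main obstacle is the large-rank martingale bound. The thresholds depend on the whole microscopic system, so we cannot condition on the master and use the static $L^2$ estimate. The Doob/compensator bound in the joint filtration avoids this, because the integrand $(g_N-g)\ind$ is predictable and the compensator of the $e$-symmetric integrand vanishes. We also need $\int_{y_R}^\infty|g_N-g|\dd y$-type terms not to appear; symmetry is what removes them, with no density or moment assumption on $U$. A secondary check is the small-rank estimate near $y\to0$, where atoms are handled one by one.
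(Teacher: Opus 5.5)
Your proposal is correct and follows the paper's proof essentially step for step. The paper uses the same split into a predictable rank-error term, controlled by symmetry, Doob and \eqref{rk:localized-estimate}, plus a strip-disagreement term closed by Lemmas~\ref{mf:strip} and \ref{mf:bootstrap} with $\varepsilon_N=b_N$, and then the same conditional-coupling transfer of kernels. The only deviation is the large-jump block $y\le y_R$: the paper discards it using the bound $T\bar f cR^{-\alpha}$ on the probability of any atom there (letting $R\to\infty$ before $M\to\infty$), whereas you bound its finitely many atoms directly via \eqref{rk:quantile-bound} at fixed $R$. Both are valid, since $b_N\ge N^{-\rho/\alpha}$.
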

\begin{proof}
Write the signal difference as a stable-mark disagreement term plus
\[
 R_t^N:=\int_{(0,t]}e(g_N-g)(y)
       \ind_{\{u\le\mu_{v-}^{N,\mathrm{mic}}(f)\}}
              \mathcal P^0(\dd v,\dd y,\dd e,\dd u).
\]
The integral is absolutely summable since $\alpha<1$.
Restricting to $y\ge y_R$ gives $R^{N,R}$, an $L^2$ martingale by
predictability and symmetry, with
\[
 \langle R^{N,R}\rangle_t
 =\int_0^t\mu_{v-}^{N,\mathrm{mic}}(f)\dd v
       \int_{y_R}^\infty(g_N-g)^2\dd y
 \overset{\eqref{rk:localized-estimate}}{\le}t\bar f C_Rb_N^2.
\]
The removed block has intensity $T\bar f cR^{-\alpha}$. Hence
\begin{equation}\label{rk:rank-error-probability}
 \Pp\left(\sup_{t\le T}|R_t^N|>Mb_N\right)
 \le T\bar f cR^{-\alpha}
    +\frac{4\Ee\left[\langle R^{N,R}\rangle_T\right]}{M^2b_N^2}
 \le T\bar f cR^{-\alpha}+\frac{4T\bar f C_R}{M^2}.
\end{equation}
Letting $R$, then $M$, tend to infinity gives $\|R^N\|_\infty=O_{\Pp}(b_N)$.
Put
\[
 A_t^N:=\int_0^t
       |\mu_{v-}^{N,\mathrm{mic}}(f)-\mu_{v-}^N(f)|\dd v
\]
and let $V_t^N$ sum $g(y)$ over candidates accepted by exactly one
threshold. By predictability, Lemma~\ref{mf:strip} applies:
\begin{equation}\label{rk:endogenous-strip}
 \Pp(V_\tau^N>x,\ A_\tau^N\le v)\le Cvx^{-\alpha}
                 \qquad(\tau\le T).
\end{equation}
Use paired real lifts with the same initial values and Brownian paths,
and set
\[
 D_t^N:=\frac1N\sum_{i=1}^N\sup_{r\le t}
 |X_r^{i,N,\mathrm{mic}}-X_r^{i,N}|.
\]
The Brownian terms cancel, while Lipschitz continuity gives
\[
 D_t^N\le\|R^N\|_{\infty,[0,t]}+V_t^N+C\int_0^tD_r^N\dd r,
 \qquad A_t^N\le\|f'\|_\infty\int_0^tD_r^N\dd r.
\]
Gronwall's inequality yields
\[
 D_t^N+A_t^N
 \le C_T\left(\sup_{v\le t}|R_v^N|+V_t^N\right).
\]
Applying Lemma~\ref{mf:bootstrap} yields
\[
 A_T^N=O_{\Pp}(b_N),\qquad
 V_T^N=O_{\Pp}(b_N^{1/\alpha})=o_{\Pp}(b_N).
\]
Thus the comparison error is $O_{\Pp}(b_N)$. Moreover,
\[
 \sqrt N b_N=
 \begin{cases}
 N^{1/2-\rho/\alpha},&\rho<1-\alpha/2,\\
 N^{(\alpha-1)/\alpha}\sqrt{1+\log N},&\rho=1-\alpha/2,\\
 N^{(\alpha-1)/\alpha},&\rho>1-\alpha/2
 \end{cases}
 \xrightarrow[N\to\infty]{}0.
\]
For $\lambda,\lambda'\in\Prob(\mathbf T)$, Sobolev embedding and
Kantorovich duality give
\[
 \|\lambda-\lambda'\|_{H^{-s}}
 =\sup_{\|\phi\|_{H^s}\le1}|(\lambda-\lambda')(\phi)| \le\sup_{\|\phi\|_{H^s}\le1}\|\phi'\|_\infty W_1(\lambda,\lambda')
 \le C_sW_1(\lambda,\lambda').
\]
The stable limit then follows from the comparison and Slutsky's theorem.
The conditional bounded--Lipschitz distance between the two field laws
is at most
\[
 \Ee\left[\min\{1,C\sqrt N
       \sup_{t\le T}W_1(\mu_t^{N,\mathrm{mic}},\mu_t^N)\}
           \Bigm|\mathcal G_T^0\right].
\]
This bound tends to zero in $L^1$. Equation
\eqref{mf:kernel-convergence} completes the proof.
\end{proof}
\begin{corollary}[Source exclusion without reset]\label{rk:exclusion}
Suppose that the hypotheses of Theorem~\ref{rk:transfer} hold. Independently
mark each Poisson candidate with a uniform random variable in $(0,1)$,
independent of the original master, initial sample and Brownian motions.
At an event accepted by the exclusion system, use this mark to choose
one source $j$ with probabilities
\[
 \frac{f(X_{t-}^{j,N,\mathrm{exclude}})}
      {\sum_{i=1}^N f(X_{t-}^{i,N,\mathrm{exclude}})},
 \qquad j=1,\ldots,N,
\]
and omit only that particle from the common translation. If the
denominator is zero, no event is accepted and no source choice is
needed. Keep the same candidates, initial sample and Brownian motions
as in the all-recipient system, with each system using its own
pre-jump acceptance threshold. Then
\[
 \sup_{t\le T}W_1(\mu_t^{N,\mathrm{exclude}},
                    \mu_t^{N,\mathrm{mic}})=O_{\Pp}(N^{-1}),
\]
and the empirical CLT and conditional-kernel conclusion of
Theorem~\ref{rk:transfer} remain valid. Here we still condition on
the original master sigma-field $\mathcal G_T^0$. The additional
source-selection marks are not included in that sigma-field.
\end{corollary}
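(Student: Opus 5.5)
We compare the exclusion system directly with the all-recipient microscopic system $\mu^{N,\mathrm{mic}}$ of Theorem~\ref{rk:transfer}, using synchronous coupling: same initial sample, Brownian motions and candidates of $\mathcal P^0$. Each system keeps its own pre-jump threshold. The difference has two sources. The first is the excluded particle: at each accepted event of the exclusion system, exactly one particle misses the translation $eg_N(y)$. The second is label disagreement, i.e. candidates accepted by exactly one of the thresholds $\mu_{v-}^{N,\mathrm{exclude}}(f)$ and $\mu_{v-}^{N,\mathrm{mic}}(f)$. With paired real lifts we set
\[
 D_t^N:=\frac1N\sum_{i=1}^N\sup_{r\le t}
 |X_r^{i,N,\mathrm{exclude}}-X_r^{i,N,\mathrm{mic}}|,\qquad
 A_t^N:=\int_0^t|\mu_{v-}^{N,\mathrm{exclude}}(f)-\mu_{v-}^{N,\mathrm{mic}}(f)|\dd v,
\]
and let $V_t^N$ sum $g_N(y)$ over candidates accepted by exactly one threshold.

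\emph{Exclusion error.} The exclusion error contributes to $D^N$ at most
\[
 E_t^N:=\frac1N\int_{(0,t]}g_N(y)\ind_{\{u\le\mu_{v-}^{N,\mathrm{exclude}}(f)\}}\mathcal P^0(\dd v,\dd y,\dd e,\dd u),
\]
since only one of the $N$ particles is displaced, by $|eg_N(y)|$. Here $E_T^N\le N^{-1}\sum_{\text{candidates}}g_N(y)\ind_{\{u\le\bar f\}}$, whose mean is
\[
 N^{-1}T\bar f\int_0^N g_N(y)\dd y=T\bar f\,N^{-1}\,N\,\Ee\left[N^{-1/\alpha}|U|\right].
\]
This uses no finite mean, which $U$ lacks for $\alpha<1$. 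So I would instead split by size. Jumps with $|z|\le 1$ have compensated mean at most $N^{-1}T\bar f\int_{|z|\le1}|z|\nu_N(\dd z)\le CN^{-1}$ by \eqref{rk:uniform-tails}. Jumps with $|z|>1$ form a finite number, tight uniformly in $N$ by \eqref{rk:uniform-tails}, each of bounded effect since the torus distance is at most $1/2$. Working with torus distances (bounded by $1$ per particle) gives the $O_{\Pp}(N^{-1})$ bound. Thus $\sup_t E_t^N=O_{\Pp}(N^{-1})$ in the metric relevant for $W_1$.

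\emph{Gronwall and strip closure.} Source selection does not affect the survivors' drift beyond Lipschitz terms, so Gronwall gives
\[
 D_t^N+A_t^N\le C_T\Bigl(\sup_{r\le t}E_r^N+V_t^N\Bigr).
\]
The thresholds are predictable, and the extra uniform marks are independent, so the compensator $\dd t\,\dd y\,\nu_e\,\dd u$ is preserved in the enlarged filtration. Lemma~\ref{mf:strip}, with mark measure $\nu_N$ satisfying \eqref{rk:uniform-tails}, then gives the positive strip estimate. Lemma~\ref{mf:bootstrap} with $\varepsilon_N=N^{-1}$ yields $A_T^N=O_{\Pp}(N^{-1})$ and $V_T^N=O_{\Pp}(N^{-1/\alpha})=o_{\Pp}(N^{-1})$. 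Since $W_1(\mu_t^{N,\mathrm{exclude}},\mu_t^{N,\mathrm{mic}})\le D_t^N$, up to torus truncation, we obtain the stated $O_{\Pp}(N^{-1})$.

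\emph{CLT.} Because $\sqrt N\cdot N^{-1}\to0$, \eqref{mf:dirac-bound} transfers the field convergence \eqref{rk:micro-clt} by Slutsky. The conditional bounded--Lipschitz bound goes as in Theorem~\ref{rk:transfer}: the $d_{\mathrm{BL}}$ distance between the two conditional laws is at most
\[
 \Ee\left[\min\{1,C\sqrt N\sup_tW_1\}\Bigm|\mathcal G_T^0\right],
\]
which tends to zero in $L^1$. Here the conditional law given $\mathcal G_T^0$ also integrates over the source marks; that causes no difficulty, since the coupling inequality holds before conditioning.

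\emph{Main obstacle.} The delicate step is the exclusion term with infinite-mean marks. Large jumps of the excluded particle must be handled through bounded torus distance rather than the lifted Euclidean distance. In the Gronwall step with $\|B\|_{\mathrm{Lip}}$, I would therefore work with $\min(1,\cdot)$-truncated lifts, re-lifting after each large candidate, of which there are finitely many.
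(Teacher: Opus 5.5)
Your proposal is correct and follows the paper's proof: synchronous coupling with each system's own pre-jump threshold, an exclusion cost of at most $(|z|\wedge D)/N$ per accepted event, the Gronwall bound $D_t^N+A_t^N\le C_T(E_N+V_t^N)$, Lemmas~\ref{mf:strip} and~\ref{mf:bootstrap} with $\varepsilon_N=N^{-1}$ in the filtration enlarged by the source marks, and transfer through the conditional coupling estimate. The paper works with $d_{\mathbf T}$ throughout, since common translations preserve it, and bounds the exclusion cost in one line by $\Ee[E_N]=\frac{T\bar f}{N}\int_0^D\nu_N(|z|>v)\dd v=O(N^{-1})$, so your size split and re-lifting device are unnecessary; your ``main obstacle'' is not really one, because even with real lifts the total accepted mark size is tight uniformly in $N$ by \eqref{rk:uniform-tails}.
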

\begin{proof}
Enlarge the filtration by the source marks revealed before the current
time. Independent marking multiplies the candidate compensator by
Lebesgue measure on $(0,1)$. Its projection remains unchanged. Both
source probabilities and acceptance thresholds are predictable.
At a shared event the excluded source adds at most $|z|\wedge D$ to
the paired distance, where $D:=\operatorname{diam}(\mathbf T)$. Thus the
total mean cost is bounded by
\begin{align*}
 E_N&:=\frac1N\int_{[0,T]\times\R\times[0,\bar f]}
             (|z|\wedge D)\,\mathcal N_N^0(\dd t,\dd z,\dd u),\\
 \Ee\left[E_N\right]
 &=\frac{T\bar f}{N}\int_0^D\nu_N(|z|>v)\dd v
 \overset{\eqref{rk:uniform-tails}}{\le}
 \frac{T\bar f C D^{1-\alpha}}{(1-\alpha)N}=O(N^{-1}).
\end{align*}
Here $\mathcal N_N^0$ is the image of the master restricted to
$0<y<N$ under the microscopic mark map, and has mark measure $\nu_N$.
Let
\[
 D_t^N:=\frac1N\sum_{i=1}^N\sup_{r\le t}
 d_{\mathbf T}(X_r^{i,N,\mathrm{exclude}},X_r^{i,N,\mathrm{mic}}),\qquad A_t^N:=\int_0^t
 |\mu_{r-}^{N,\mathrm{exclude}}(f)-\mu_{r-}^{N,\mathrm{mic}}(f)|\dd r,
\]
and let $V_t^N$ sum $|z|$ over candidates accepted by exactly one
system. Common translations preserve $d_{\mathbf T}$. The Lipschitz drift,
synchronous Brownian coupling and preceding event cost give
\[
 D_t^N+A_t^N\le C_T(E_N+V_t^N),\qquad t\le T.
\]
By \eqref{rk:uniform-tails} and lemmas~\ref{mf:strip} and \ref{mf:bootstrap},
$A_T^N=O_{\Pp}(N^{-1})$ and
$V_T^N=O_{\Pp}(N^{-1/\alpha})=o_{\Pp}(N^{-1})$.
Thus $D_T^N=O_{\Pp}(N^{-1})$, which proves the comparison.
The conditional coupling estimate from Theorem~\ref{rk:transfer},
which integrates over the source marks, transfers both conclusions.
\end{proof}
\subsection{The critical and subcritical endogenous regimes}
\label{cr:section}
Work in dimension one under Assumption~\ref{mf:assumptions}, so $s>7/2$.
Assume \eqref{rk:strong-hall} and the eventual continuity and strict
monotonicity in Lemma~\ref{rk:hall-sufficient}, with
\[
 0<\alpha<1,\qquad 0<\rho\le\alpha/2,\qquad r_N:=N^{-\rho/\alpha}.
\]
Use the microscopic coupling \eqref{rk:micro-signal} and reference
$(\mu,L)$ from \eqref{mf:reference}. For $t\in(0,T]$, set
\[
 \iota_t:\left|\begin{aligned}
 [0,\bar f]&\longrightarrow \{0,1\}\\
 u&\longmapsto \ind_{\{u\le\mu_{t-}(f)\}}.
 \end{aligned}\right.
\]
With $h$ from \eqref{rk:maps} and compensated master
$\widetilde{\mathcal P}^0:=\mathcal P^0-\dd t\dd y\nu_e(\dd e)\dd u$,
define
\begin{align}\label{cr:rank-signal}
 Z_t:={}&\int_{(0,t]\times(0,1]\times\{-1,1\}\times[0,\bar f]}
             eh(y)\iota_v(u)\,\mathcal P^0(\dd v,\dd y,\dd e,\dd u)
 \notag\\
 &+\underset{M\to\infty}{L^2\text{-}\lim}
   \int_{(0,t]\times(1,M]\times\{-1,1\}\times[0,\bar f]}
             eh(y)\iota_v(u)\,\widetilde{\mathcal P}^{0}
                              (\dd v,\dd y,\dd e,\dd u).
\end{align}
Since $\rho\le\alpha/2<1-\alpha/2$,
\[
 \int_1^\infty h(y)^2\dd y
 =\frac{(d/\alpha)^2c^{2(1-\rho)/\alpha}}
        {2(1-\rho)/\alpha-1}<\infty,
\]
where $d$ is the tail coefficient in \eqref{rk:strong-hall}.
Thus $Z\in D([0,T];\R)$ is adapted and master-measurable. Its random
acceptance rate need not give a Lévy process or finite variation.
Let $\Ee_0$ denote expectation over the private initial values and
Brownian motions with the full master fixed. For a reference particle
$\bar X$, define $V\in D([0,T];\R)$ and
$c=(c_1,\ldots,c_q)\in D([0,T];\R^q)$ by
\begin{align}\label{cr:response}
 V_t={}&Z_t+\int_0^t\left[
 B_x(\bar X_u,m(\mu_u))V_u
 +\sum_{j=1}^q B_{m_j}(\bar X_u,m(\mu_u))c_j(u)\right]\dd u,
 \notag\\
 c_j(u):={}&\Ee_0\left[g_j'(\bar X_u)V_u\right],\qquad j=1,\ldots,q,
\end{align}
where $B_x:=\partial_xB$ and $B_{m_j}:=\partial_{m_j}B$. Define
\[
 \chi_t:\left|\begin{aligned}
 H^s(\mathbf T)&\longrightarrow \R\\
 \phi&\longmapsto \Ee_0\left[\phi'(\bar X_t)V_t\right].
 \end{aligned}\right.
\]
Since the derivatives are bounded, successive Picard iterates
differ after $n$ steps by at most
\[
 C\|Z\|_\infty\frac{(Ct)^n}{n!}.
\]
This proves convergence. The same estimate for two solutions proves
uniqueness. Gronwall and Sobolev embedding yield
\[
 \|V\|_\infty\le C_T\|Z\|_\infty,\qquad
 \|c\|_\infty\le C_T\|Z\|_\infty,\qquad |\chi_t(\phi)|\le C_T\|Z\|_\infty\|\phi'\|_\infty
 \le C_T\|Z\|_\infty\|\phi\|_{H^s}.
\]
Hence $\chi_t\in H^{-s}(\mathbf T)$, with
$\|\chi\|_{\infty,H^{-s}}\le C_T\|Z\|_\infty$.
Both $c$ and $\chi$ are master-measurable. These estimates do not require a total-variation bound on
$Z$.
\begin{theorem}[Endogenous rank response]\label{cr:theorem}
Under the preceding assumptions,
\begin{equation}\label{cr:signal-limit}
 \left\|r_N^{-1}(L^{N,\mathrm{mic}}-L)-Z\right\|_\infty
                                      \xrightarrow[N\to\infty]{\Pp}0.
\end{equation}
Let $\widetilde\mu^N$ be the auxiliary empirical law driven by the
prescribed signal $L$, as in Theorem~\ref{mf:clt}. Then
\begin{equation}\label{cr:field-expansion}
 \sup_{t\le T}\left\|
 r_N^{-1}(\mu_t^{N,\mathrm{mic}}-\mu_t)
 -\frac{N^{-1/2}}{r_N}\sqrt N(\widetilde\mu_t^N-\mu_t)
 -\chi_t\right\|_{H^{-s}}\xrightarrow[N\to\infty]{\Pp}0.
\end{equation}
For $\rho<\alpha/2$, $r_N^{-1}(\mu^{N,\mathrm{mic}}-\mu)\to\chi$
uniformly in probability in $H^{-s}(\mathbf T)$. At $\rho=\alpha/2$,
\begin{equation}\label{cr:critical-limit}
 \sqrt N(\mu^{N,\mathrm{mic}}-\mu)
   \xRightarrow[N\to\infty]{\mathrm{st}(\mathcal G_T^0)}\eta+\chi
                \quad\text{in }D([0,T];H^{-s})\text{ with }J_1,
\end{equation}
where $\eta$ is the centered conditional Gaussian field of
Theorem~\ref{mf:clt}. In both regimes, the conditional laws given the master
converge in probability to the indicated kernels. At criticality this
kernel is Gaussian with mean $\chi$ and the covariance of $\eta$.
The field limits hold jointly with
$(L,r_N^{-1}(L^{N,\mathrm{mic}}-L))$, whose limit is $(L,Z)$,
with a common $J_1$ time change.
\end{theorem}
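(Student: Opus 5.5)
The plan is to linearize around the reference system in three layers. The signal error carries the rank forcing $r_NZ$. The field error then responds linearly to it through $\chi$. The sampling error is added on top via $\widetilde\mu^N$.

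\emph{Step 1: the signal, \eqref{cr:signal-limit}.} Decompose
\[
L^{N,\mathrm{mic}}-L=R^N+K^N,
\]
exactly as in the proof of Theorem~\ref{rk:transfer}. Here $R^N$ integrates $e(g_N-g)$ against the microscopic acceptance $\ind_{\{u\le\mu^{N,\mathrm{mic}}_{v-}(f)\}}$, and $K^N$ is the stable-mark disagreement term. Since $\rho\le\alpha/2$ gives $b_N=r_N$, Theorem~\ref{rk:transfer}'s argument with Lemma~\ref{mf:bootstrap} yields $A_T^N=O_{\Pp}(r_N)$ and $V_T^N=O_{\Pp}(r_N^{1/\alpha})=o_{\Pp}(r_N)$, so $K^N$ is negligible. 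In $r_N^{-1}R^N$ I then replace the microscopic threshold by the reference indicator $\iota_v(u)$ and $r_N^{-1}(g_N-g)$ by $h$. On the rank block $y\le R$ there are finitely many atoms. Local uniform convergence (Lemma~\ref{rk:hall-sufficient}) handles the mark replacement there. The threshold change is handled by the fact that, almost surely, no atom sits on the limiting threshold, together with $\mu^{N,\mathrm{mic}}(f)\to\mu(f)$ uniformly in probability. On $y>R$, Doob's inequality bounds the mark error by $\bar f T\int_R^\infty e_N^2\to0$ using \eqref{rk:inverse-l2}. The threshold error has bracket
\[
\int_R^\infty h^2\dd y\cdot\int_0^T|\mu^{N,\mathrm{mic}}_{v-}(f)-\mu_{v-}(f)|\dd v=O_{\Pp}(N^{-1/2}+r_N)\to0,
\]
after localization with a stopping time on $A^N$. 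Symmetry is what lets the uncompensated small-rank sum be identified with the compensated definition \eqref{cr:rank-signal}.

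\emph{Step 2: the field, \eqref{cr:field-expansion}.} Couple three particle families with the same initial data and Brownian motions: $X^{i,N,\mathrm{mic}}$, the auxiliary $\widetilde X^{i,N}$ driven by $L$, and, for comparison, i.i.d. reference copies $\bar X^i$. Set $\Delta^i:=r_N^{-1}(X^{i,N,\mathrm{mic}}-\widetilde X^{i,N})$ in real lifts. It satisfies
\[
\Delta^i_t=r_N^{-1}(L^{N,\mathrm{mic}}_t-L_t)+\int_0^t\Bigl[B_x\Delta^i_u+\sum_jB_{m_j}\,r_N^{-1}\bigl(m_j(\mu^{N,\mathrm{mic}}_u)-m_j(\widetilde\mu^N_u)\bigr)\Bigr]\dd u+\text{Taylor remainder}.
\]
Here $r_N^{-1}(m_j(\mu^{N,\mathrm{mic}})-m_j(\widetilde\mu^N))=N^{-1}\sum_i g_j'(\widetilde X^i)\Delta^i+O(r_N\max_i|\Delta^i|^2)$. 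The a priori bound $\max_i\|\Delta^i\|_\infty=O_{\Pp}(1)$ follows from Gronwall and Step~1. I then compare $\Delta^i$ with $V^i$, the solution of \eqref{cr:response} driven by the $i$-th reference particle. The coefficient errors $B_x(\widetilde X^i,\cdot)-B_x(\bar X^i,\cdot)$ are $O_{\Pp}(N^{-1/2})$ by \eqref{mf:particle-comparison} and Theorem~\ref{mf:clt}. The empirical averages $N^{-1}\sum_ig_j'(\bar X^i)V^i$ converge to $c_j$ by a conditional law of large numbers given the master, because the $(\bar X^i,V^i)$ are conditionally i.i.d. A Gronwall argument in $\sup_t N^{-1}\sum_i|\Delta^i-V^i|$ then gives $o_{\Pp}(1)$.

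Finally, for $\|\phi\|_{H^s}\le1$ I write
\[
r_N^{-1}(\mu^{N,\mathrm{mic}}_t-\widetilde\mu^N_t)(\phi)=N^{-1}\sum_i\phi'(\widetilde X^i_t)\Delta^i_t+O(r_N\max_i|\Delta^i|^2),
\]
and compare this with $N^{-1}\sum_i\phi'(\bar X^i_t)V^i_t\to\chi_t(\phi)$. Uniformity in $\phi$ and $t$ comes from the Sobolev embedding $H^s\hookrightarrow C^2$, which makes the maps $\phi\mapsto\phi'(x)$ uniformly Lipschitz. It also needs a Hilbert-space LLN in $H^{-s}$ for the i.i.d. elements $\delta_{\bar X^i_t}'V^i_t$, together with time tightness. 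For the time tightness I use that $V$ is càdlàg with jumps only at the master atoms, which are shared across $i$.

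\emph{Step 3: the conclusions.} For $\rho<\alpha/2$, $N^{-1/2}/r_N\to0$ and $\sqrt N(\widetilde\mu^N-\mu)$ is tight by Lemma~\ref{mf:prescribed}, giving the limit $\chi$. At $\rho=\alpha/2$ the ratio equals $1$, and $\chi$ is $\cG_T^0$-measurable. The conditional kernel of $\widetilde\eta^N+\chi$ is therefore the translate of $\widetilde K_N$ by a master-measurable path, and it converges by the proof of Theorem~\ref{mf:clt}. The $o_{\Pp}(1)$ error is absorbed by the conditional coupling bound on $d_{\mathrm{BL}}$. Joint convergence with $(L,r_N^{-1}(L^{N,\mathrm{mic}}-L))$ is immediate, since these converge in probability to master-measurable limits, using the common identity time change.

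\emph{Main obstacle.} I expect the hardest step to be the threshold replacement inside the non-summable compensated rank sum in Step~1, which must be uniform in $t$. The plan is to combine the localized $L^2$ bracket with the Step~1 estimate $A_T^N=O_{\Pp}(r_N)$ via a stopping-time localization, in the same way Lemma~\ref{mf:bootstrap} is used.
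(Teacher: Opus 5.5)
Your outline follows the paper's three stages closely:
\begin{itemize}
\item the split $L^{N,\mathrm{mic}}-L=F^N+K^N$, with the strip and bootstrap bounds $A_T^N=O_{\Pp}(r_N)$ and $V_T^N=o_{\Pp}(r_N)$;
\item the finite rank block, handled through the positive distance of its candidates from the reference threshold;
\item a pathwise linearization around $\widetilde X^{i,N}$, compared with conditionally i.i.d.\ reference responses $\bar V^i$.
\end{itemize}
Two differences are harmless. First, on ranks $y>R$ the paper never swaps thresholds. It bounds $\|r_N^{-1}F^{N,>R}\|_\infty$ and $\|Z^{>R}\|_\infty$ separately by $4T\bar f\int_R^\infty D_N^2$ and $4T\bar f\int_R^\infty h^2$, then lets $N\to\infty$ and afterwards $R\to\infty$. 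Your bracket-plus-localization argument is also valid, but this is not where the difficulty lies. Second, the paper first builds a finite-$N$ response $\chi^N[z]$ for arbitrary bounded paths $z$. It proves $\|\mu^{N,\delta z}-\mu^{N,0}-\delta\chi^N[z]\|_{\infty,H^{-s}}\le C_T\delta^2\|z\|_\infty^2$ and Lipschitz dependence on $z$, and only then inserts the endogenous $z_N$. Your single Gronwall argument for $\Delta^i-V^i$ achieves the same.

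The step your plan does not actually deliver is the uniform-in-time law of large numbers
\[
 \sup_{t\le T}\Bigl\|-\frac1N\sum_{i=1}^N\bar V_t^i\partial_x\delta_{\bar X_t^i}-\chi_t\Bigr\|_{H^{-s}}\xrightarrow[N\to\infty]{\Pp}0.
\]
A Hilbert-space LLN at each fixed $t$ plus ``time tightness'' does not close as stated, for three reasons:
\begin{itemize}
\item Given the master, this centered field jumps at every accepted atom, typically a dense set of fixed times.
\item $Z$ need not have finite variation, for instance when $d\ne0$, $\alpha>2/3$ and $1-\alpha<\rho\le\alpha/2$.
\item At an atom with $\Delta L_t=z$, the field's jump is $(\tau_z-\mathrm{Id})$ applied to its left limit, plus $\Delta Z_t$ times a centered average. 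Bounding it therefore presupposes the uniform smallness you are trying to prove.
\end{itemize}
Your observation that the jumps are shared across $i$ is the right one, but you must use it algebraically rather than through tightness. Write $\bar X^i=L+Y^i$ and $\bar V^i=Z+R^i$. Here $Y^i$ are continuous It\^o processes, and by \eqref{cr:response} $R^i$ is absolutely continuous with $\|R^i\|_\infty+\|\dot R^i\|_\infty\le C_T\|Z\|_\infty$. Then
\[
 -\frac1N\sum_{i=1}^N\bar V_t^i\partial_x\delta_{\bar X_t^i}
 =-\partial_x\tau_{L_t}\Bigl(Z_t\frac1N\sum_{i=1}^N\delta_{Y_t^i}
 +\frac1N\sum_{i=1}^NR_t^i\delta_{Y_t^i}\Bigr),
\]
and $\chi_t$ has the same form with conditional means in place of the averages. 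The two averages are continuous semimartingales in $H^{-(s-1)}$. After localizing on $\|Z\|_\infty\le K$, It\^o's formula, conditional independence and Doob's inequality give centered maximal bounds of order $N^{-1/2}$. Since $\tau_{L_t}$ is an isometry and $\partial_x:H^{-(s-1)}\to H^{-s}$ is bounded, uniformity in $t$ follows directly, with no tightness argument.

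A smaller point of the same kind arises in your Step 3. Adding the fixed path $\chi$ is not $J_1$-continuous at limits that share its jump times, and $\eta$ and $\chi$ jump at the same accepted atoms. Route the critical limit through the co-moving convergence of $\tau_{-L}\widetilde\eta^N$ in $C([0,T];H^{-s})$. Then use the map $z\mapsto(L,Z,\tau_Lz+\chi)$, which is $1$-Lipschitz for the uniform distance, as the paper does.
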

\begin{proof}
\emph{Signal comparison.}
Set $D_N:=r_N^{-1}(g_N-g)$ and
$\iota_t^N(u):=\ind_{\{u\le\mu_{t-}^{N,\mathrm{mic}}(f)\}}$.
In this proof, omitted mark domains are
$(0,\infty)\times\{-1,1\}\times[0,\bar f]$. Split
\begin{align*}
 L_t^{N,\mathrm{mic}}-L_t&=F_t^N+K_t^N,\qquad
 F_t^N:=\int_{(0,t]}e(g_N-g)(y)\iota_v^N(u)
                  \mathcal P^0(\dd v,\dd y,\dd e,\dd u),\\
 K_t^N&:=\int_{(0,t]}eg(y)(\iota_v^N-\iota_v)(u)
                  \mathcal P^0(\dd v,\dd y,\dd e,\dd u).
\end{align*}
Let $V_t^N:=\int_{(0,t]}g(y)|\iota_v^N-\iota_v|(u)\dd\mathcal P^0$
and $A_t^N:=\int_0^t|\mu_{u-}^{N,\mathrm{mic}}(f)-\mu_{u-}(f)|\dd u$.
Then $\|K^N\|_{\infty,[0,t]}\le V_t^N$.
Here $b_N=r_N$. The predictable-threshold bound
\eqref{rk:rank-error-probability} gives
$\|F^N\|_\infty=O_{\Pp}(r_N)$.
Put $R_N:=\sup_{t\le T}|\widetilde\mu_t^N(f)-\mu_t(f)|$.
By Lemma~\ref{mf:prescribed}, $R_N=O_{\Pp}(N^{-1/2})$, and synchronous
coupling gives
\[
 A_t^N\le C_T\{R_N+\|F^N\|_{\infty,[0,t]}+V_t^N\}.
\]
Since $N^{-1/2}\le r_N$, lemmas~\ref{mf:strip} and \ref{mf:bootstrap} yield
\begin{equation}\label{cr:strip-rate}
 A_T^N=O_{\Pp}(r_N),\qquad
 V_T^N=O_{\Pp}(r_N^{1/\alpha})=o_{\Pp}(r_N).
\end{equation}
Since
\[
 \sup_{t\le T}|\mu_t^{N,\mathrm{mic}}(f)-\mu_t(f)|
 \le R_N+C_T(\|F^N\|_\infty+V_T^N),
\]
we also obtain
\begin{equation}\label{cr:threshold-rate}
 \sup_{t\le T}|\mu_t^{N,\mathrm{mic}}(f)-\mu_t(f)|=O_{\Pp}(r_N).
\end{equation}
Fix $R>1$. The block $0<y\le R$ contains finitely many candidates before $T$, with
a positive minimum rank almost surely. Predictable compensation gives
\[
 \Ee\left[\int_{(0,T]\times(0,R]\times\{-1,1\}\times[0,\bar f]}
       \ind_{\{u=\mu_{v-}(f)\}}\dd\mathcal P^0\right]=0.
\]
Thus, if $(t_j,y_j,e_j,u_j)_{j=1}^{K_R}$ are these candidates,
\[
 \Delta_R:=\min_{1\le j\le K_R}|u_j-\mu_{t_j-}(f)|>0
 \quad\text{almost surely},
\]
with $\Delta_R:=1$ when $K_R=0$. Their labels coincide whenever
$\sup_{t\le T}|\mu_t^{N,\mathrm{mic}}(f)-\mu_t(f)|<\Delta_R$.
By \eqref{cr:threshold-rate} this event has probability tending to one.
On it, the uniform error between their rescaled sums is at most
$\sum_{j=1}^{K_R}|D_N(y_j)-h(y_j)|\to0$.
On $y>R$, symmetry centers both integrals, and Doob's inequality gives
\[
 \Ee\left[\|r_N^{-1}F^{N,>R}\|_\infty^2\right]
       \le4T\bar f\int_R^\infty D_N(y)^2\dd y,\qquad
 \Ee\left[\|Z^{>R}\|_\infty^2\right]
       \le4T\bar f\int_R^\infty h(y)^2\dd y.
\]
The superscript $>R$ denotes restriction to those ranks. Since
\begin{align*}
 \int_R^\infty D_N(y)^2\dd y
 &\le2\int_R^\infty|D_N(y)-h(y)|^2\dd y
      +2\int_R^\infty h(y)^2\dd y,\\
 \limsup_{N\to\infty}\int_R^\infty D_N(y)^2\dd y
 &\overset{\eqref{rk:inverse-l2}}{\le}
 2\int_R^\infty h(y)^2\dd y\xrightarrow[R\to\infty]{}0,
\end{align*}
the tail bounds vanish as $N\to\infty$ and then $R\to\infty$.
Together with the finite-rank comparison, this gives
$\|r_N^{-1}F^N-Z\|_\infty\to0$ in probability. Finally,
\[
 \|r_N^{-1}(L^{N,\mathrm{mic}}-L)-Z\|_\infty
 \le\|r_N^{-1}F^N-Z\|_\infty+r_N^{-1}V_T^N
 \overset{\eqref{cr:strip-rate}}{=}o_{\Pp}(1),
\]
which proves \eqref{cr:signal-limit}.
\emph{Finite-particle variation.}
Fix $\ell,z\in D([0,T];\R)$ with $\ell_0=z_0=0$ and $\delta\in\R$.
Use the same private inputs for systems $X^{i,N,\delta z}$ and
$X^{i,N,0}$ driven by $\ell+\delta z$ and $\ell$, and let
$\mu_t^{N,\delta z}:=N^{-1}\sum_{i=1}^N\delta_{X_t^{i,N,\delta z}}$.
Define $V^{i,N}[z]\in D([0,T];\R)$ and
$\chi^N[z]\in D([0,T];H^{-s}(\mathbf T))$ by
\begin{align}\label{cr:finite-response}
 V_t^{i,N}[z]:={}&z_t+\int_0^t\left[
 B_x(X_u^{i,N,0},m(\mu_u^{N,0}))V_u^{i,N}[z]
 +\sum_j B_{m_j}(X_u^{i,N,0},m(\mu_u^{N,0}))c_j^N(u)\right]\dd u,
 \notag\\
 c_j^N(u):={}&\frac1N\sum_i g_j'(X_u^{i,N,0})V_u^{i,N}[z],\qquad
 \chi_t^N[z]:= -\frac1N\sum_i V_t^{i,N}[z]\partial_x\delta_{X_t^{i,N,0}}.
\end{align}
Using paired Euclidean lifts, put
$d_i:=X^{i,N,\delta z}-X^{i,N,0}$ and
$\Delta m_j:=N^{-1}\sum_i[g_j(X^{i,N,\delta z})-g_j(X^{i,N,0})]$.
Bounded first derivatives and Gronwall give
\begin{equation}\label{cr:first-variation-bound}
 \max_i\|V^{i,N}[z]\|_\infty\le C_T\|z\|_\infty,
 \qquad \max_i\|d_i\|_\infty\le C_T|\delta|\|z\|_\infty.
\end{equation}
Write $m_u^0:=m(\mu_u^{N,0})$. Taylor's formula gives
\begin{align*}
 &\Delta m_j(u)
 =\frac1N\sum_i g_j'(X_u^{i,N,0})d_i(u)+\varepsilon_j(u),\\
 &B(X_u^{i,N,\delta z},m_u^0+\Delta m(u))-B(X_u^{i,N,0},m_u^0) =B_x(X_u^{i,N,0},m_u^0)d_i(u)
   +\sum_j B_{m_j}(X_u^{i,N,0},m_u^0)\Delta m_j(u)
   +\varepsilon_i^B(u),\\
 &\max_j|\varepsilon_j(u)|+\max_i|\varepsilon_i^B(u)|
 \le C\max_i|d_i(u)|^2
 \overset{\eqref{cr:first-variation-bound}}{\le}C_T\delta^2\|z\|_\infty^2.
\end{align*}
For $E_t:=\max_i\sup_{u\le t}|d_i(u)-\delta V_u^{i,N}[z]|$ this yields
\begin{equation}\label{cr:particle-taylor}
 E_t\le C\int_0^t E_u\dd u+C_T\delta^2\|z\|_\infty^2
 \quad\Longrightarrow\quad E_T\le C_T\delta^2\|z\|_\infty^2.
\end{equation}
All constants are independent of $N$, $\ell$ and the sample. These
pathwise estimates also hold for random $z$ with bounded paths.
For $s>5/2$, the map
\[
 \left|\begin{aligned}
 \mathbf T&\longrightarrow H^{-s}(\mathbf T)\\
 x&\longmapsto \delta_x
 \end{aligned}\right.
\]
has bounded first and second derivatives $-\partial_x\delta_x$ and
$\partial_x^2\delta_x$. In particular,
\[
 \delta_{x+d}-\delta_x+d\partial_x\delta_x
 =d^2\int_0^1(1-\theta)\partial_x^2\delta_{x+\theta d}\dd\theta,
 \qquad
 \sup_x\|\partial_x^2\delta_x\|_{H^{-s}}<\infty.
\]
Averaging gives
\begin{equation}\label{cr:finite-taylor}
 \|\mu^{N,\delta z}-\mu^{N,0}-\delta\chi^N[z]\|_{\infty,H^{-s}}
 \le C\max_i\|d_i\|_\infty^2+CE_T \overset{\eqref{cr:first-variation-bound},\,\eqref{cr:particle-taylor}}{\le}
 C_T\delta^2\|z\|_\infty^2.
\end{equation}
The linear response defines a bounded linear map
\[
 \chi^N:\left|\begin{aligned}
 \{z\in D([0,T];\R):z_0=0\}&\longrightarrow D([0,T];H^{-s}(\mathbf T))\\
 z&\longmapsto \chi^N[z],
 \end{aligned}\right.
\]
when both spaces carry the uniform norm. In particular,
\begin{equation}\label{cr:response-lipschitz}
 \|\chi^N[z]-\chi^N[z']\|_{\infty,H^{-s}}
                         \le C_T\|z-z'\|_\infty.
\end{equation}
\emph{Response law of large numbers.}
Take $\ell=L$ and $z=Z$, fix the master, and restrict to
$\|Z\|_\infty\le K$. Couple $X^{i,N,0}=\widetilde X^{i,N}$ with
reference particles $\bar X^i$ using the same private inputs.
The solutions $\bar V^i$ of \eqref{cr:response} make
$(\bar X^i,\bar V^i)_{i\ge1}$ conditionally independent and identically
distributed, with $\|\bar V^i\|_\infty\le C_TK$.
Set
\[
 \mathcal E_N(u):=\sum_j\left(
 \left|\frac1N\sum_i g_j(\bar X_u^i)-\mu_u(g_j)\right|
 +\left|\frac1N\sum_i g_j'(\bar X_u^i)\bar V_u^i-c_j(u)\right|\right).
\]
Subtracting the response equations and applying Gronwall gives
\begin{align}\label{cr:response-pairing}
 \Ee_0\left[\frac1N\sum_i\|V^{i,N}[Z]-\bar V^i\|_\infty\right]
 &\le C_{K,T}\left(
 \Ee_0\left[\frac1N\sum_i\|\widetilde X^{i,N}-\bar X^i\|_\infty\right]
       +\int_0^T\Ee_0\left[\mathcal E_N(u)\right]\dd u\right)
 \notag\\
 &\le C_{K,T}N^{-1/2}.
\end{align}
Indeed, each centered average in $\mathcal E_N(u)$ has conditional
second moment at most $C_K/N$, and
\eqref{mf:particle-comparison} controls the first term. Since
$x\mapsto\partial_x\delta_x$ is Lipschitz into $H^{-s}$, it remains
to control the centered mean of
$-\bar V_t^i\partial_x\delta_{\bar X_t^i}$ uniformly in time.
Set $Y^i:=\bar X^i-L\in C([0,T];\mathbf T)$ and
$R^i:=\bar V^i-Z\in C([0,T];\R)$. Then
\[
 \dd Y_t^i=b_t^i\dd t+\sigma\dd W_t^i,\qquad
 b_t^i:=B(\bar X_t^i,m(\mu_t)),\qquad
 \|R^i\|_\infty+\|\dot R^i\|_\infty\le C_{K,T}.
\]
Here $R^i$ is absolutely continuous and $\dot R^i$ is its
almost-everywhere derivative. In the Hilbert space $\mathcal H:=H^{-(s-1)}(\mathbf T)$,
put $A_t^{i,0}:=\delta_{Y_t^i}$ and $A_t^{i,1}:=R_t^i\delta_{Y_t^i}$.
Since $s-1>5/2$, Itô's formula gives
\begin{gather*}
 \dd A_t^{i,j}=D_t^{i,j}\dd t+G_t^{i,j}\dd W_t^i,\qquad j=0,1,\\
 D_t^{i,0}:=-b_t^i\partial_x\delta_{Y_t^i}
                     +\tfrac12a\partial_x^2\delta_{Y_t^i},\;G_t^{i,0}v:=-(\sigma v)\partial_x\delta_{Y_t^i},\quad v\in\R^{d_0},\qquad D_t^{i,1}:=\dot R_t^i\delta_{Y_t^i}+R_t^iD_t^{i,0},\;G_t^{i,1}:=R_t^iG_t^{i,0}.
\end{gather*}
Here $a=\sigma\sigma^\top\in[0,\infty)$ and, for each $i,j,t$,
$G_t^{i,j}:\R^{d_0}\to\mathcal H$ has Hilbert--Schmidt norm
$\|G_t^{i,j}\|_{\mathrm{HS}}^2:=\sum_{k=1}^{d_0}\|G_t^{i,j}e_k\|_{\mathcal H}^2$,
where $(e_k)$ is the Euclidean basis. Uniformly in $i,t$,
\[
 \|D_t^{i,j}\|_{\mathcal H}+\|G_t^{i,j}\|_{\mathrm{HS}}\le C_{K,T}.
\]
Set
\[
 \overline D_t^{N,j}:=\frac1N\sum_i
 (D_t^{i,j}-\Ee_0\left[D_t^{i,j}\right]),\qquad
 M_t^{N,j}:=\frac1N\sum_i\int_0^tG_u^{i,j}\dd W_u^i.
\]
Conditional independence and Doob's inequality yield
\begin{align*}
 \Ee_0\left[\|\overline D_t^{N,j}\|_{\mathcal H}^2\right]
 &=\frac1{N^2}\sum_i\Ee_0\left[
       \|D_t^{i,j}-\Ee_0\left[D_t^{i,j}\right]\|_{\mathcal H}^2\right]
 \le\frac{C_{K,T}}N,\\
 \Ee_0\left[\sup_{t\le T}\|M_t^{N,j}\|_{\mathcal H}^2\right]
 &\le\frac4{N^2}\sum_i\Ee_0\left[\int_0^T\|G_u^{i,j}\|_{\mathrm{HS}}^2\dd u\right]
 \le\frac{C_{K,T}}N.
\end{align*}
The centered initial average has the same variance bound for $j=0$
and is zero for $j=1$. Moreover,
\[
 \Ee_0\left[\sup_{t\le T}\left\|\int_0^t\overline D_u^{N,j}\dd u\right\|_{\mathcal H}^2\right]
 \le T\int_0^T\Ee_0\left[\|\overline D_u^{N,j}\|_{\mathcal H}^2\right]\dd u
 \le\frac{C_{K,T}}N.
\]
Combining the initial, drift and martingale estimates yields
\begin{equation}\label{cr:weighted-lln}
 \Ee_0\left[\sup_{t\le T}\left\|
       \frac1N\sum_i A_t^{i,j}-\Ee_0\left[A_t^{1,j}\right]
                         \right\|_{H^{-(s-1)}}\right]
                    \le C_{K,T}N^{-1/2},\qquad j=0,1.
\end{equation}
Now
\[
 -\frac1N\sum_i\bar V_t^i\partial_x\delta_{\bar X_t^i}
 =-\partial_x\tau_{L_t}\left(
          Z_t\frac1N\sum_i A_t^{i,0}+\frac1N\sum_i A_t^{i,1}\right),
\]
and $\partial_x:H^{-(s-1)}\to H^{-s}$ is bounded. Equations
\eqref{cr:response-pairing} and \eqref{cr:weighted-lln}, followed by
$K\to\infty$, give
\begin{equation}\label{cr:response-lln}
 \|\chi^N[Z]-\chi\|_{\infty,H^{-s}}\xrightarrow[N\to\infty]{\Pp}0.
\end{equation}
Also
$\chi_t=-\partial_x\tau_{L_t}(Z_t\Ee_0\left[A_t^{1,0}\right]+\Ee_0\left[A_t^{1,1}\right])$
is càdlàg because its conditional means are continuous and $L,Z$ are
càdlàg. Thus $\chi\in D([0,T];H^{-s}(\mathbf T))$.
\emph{Field expansion and conditional limits.}
Apply \eqref{cr:finite-taylor} with $\ell=L$, $\delta=r_N$ and
$z_N:=r_N^{-1}(L^{N,\mathrm{mic}}-L)$. By \eqref{cr:signal-limit},
$\|z_N-Z\|_\infty\xrightarrow[N\to\infty]{}0$ in probability and
$\|z_N\|_\infty=O_{\Pp}(1)$. Hence
\begin{align*}
 &\|r_N^{-1}(\mu^{N,\mathrm{mic}}-\widetilde\mu^N)-\chi\|_{\infty,H^{-s}} \le
 \|r_N^{-1}(\mu^{N,\mathrm{mic}}-\widetilde\mu^N)-\chi^N[z_N]\|_{\infty,H^{-s}}\\
 &\qquad+\|\chi^N[z_N]-\chi^N[Z]\|_{\infty,H^{-s}}
 +\|\chi^N[Z]-\chi\|_{\infty,H^{-s}}\\
 &\quad\overset{\eqref{cr:finite-taylor},\,\eqref{cr:response-lipschitz}}{\le}
 C_Tr_N\|z_N\|_\infty^2+C_T\|z_N-Z\|_\infty
       +\|\chi^N[Z]-\chi\|_{\infty,H^{-s}}
 \xrightarrow[N\to\infty]{\Pp}0
\end{align*}
by \eqref{cr:signal-limit} and \eqref{cr:response-lln},
which is \eqref{cr:field-expansion}.
For $\rho<\alpha/2$, $N^{-1/2}/r_N\xrightarrow[N\to\infty]{}0$ and
\eqref{mf:sampling-moments} removes the sampling term.
At criticality, Lemma~\ref{mf:prescribed} gives conditional convergence of
$\tau_{-L}\sqrt N(\widetilde\mu^N-\mu)$ in
$C([0,T];H^{-s})$ for almost every master. For each such master, the map
\[
 \left|\begin{aligned}
 C([0,T];H^{-s})&\longrightarrow D([0,T];\R^2\times H^{-s})\\
 z&\longmapsto (L,Z,\tau_Lz+\chi)
 \end{aligned}\right.
\]
is continuous into vector $J_1$: the uniform distance between two
outputs is at most $\|z-z'\|_{\infty,H^{-s}}$.
Let $\varepsilon_N$ be the uniform error in \eqref{cr:field-expansion}
plus $\|z_N-Z\|_\infty$. Then
\[
 \Ee\left[1\wedge\varepsilon_N\Bigm|\mathcal G_T^0\right]
           \xrightarrow[N\to\infty]{\Pp}0,
 \qquad
 \Ee\left[1\wedge\varepsilon_N\right]\xrightarrow[N\to\infty]{}0.
\]
The conditional bounded--Lipschitz distance between the microscopic
field and this approximation is bounded by a constant times
$\Ee\left[1\wedge\varepsilon_N\Bigm|\mathcal G_T^0\right]$. This proves
\eqref{cr:critical-limit} and its joint version. Below criticality, the
same argument gives the conditional Dirac kernel at $\chi$.
\end{proof}
If $\Pp(\chi_t\ne0)>0$ at a deterministic time $t>0$, the subcritical
expansion shows that the root-$N$ field is not tight at that time. The next model has a
nonzero response and an explicit critical kernel, proving sharpness of
$\rho>\alpha/2$ for the class. In an individual model the correction can
vanish when $d=0$ or $\chi=0$, as it does for a uniform spatial law under
translation.
\subsection{Sharpness of the sampling threshold in a translation model}
\label{rk:sharpness-section}
Constant rates make the response explicit. The direct argument below
only requires $s>3/2$.
Let $0<\alpha<1$, $f\equiv\lambda>0$, $B=0$, and $\sigma=0$.
Let $(Y_i)_{i\ge1}$ be independent with common law
$\nu\in\Prob(\mathbf T)$, independent of $\Pi$. Assume
$\nu(\dd x)=p(x)\dd x$ for a nonconstant $p\in C^\infty(\mathbf T)$. Set
\[
 \nu_N:=\frac1N\sum_{i=1}^N\delta_{Y_i},\quad
 \mathcal B_N:=\sqrt N(\nu_N-\nu),\quad
 L_t^N:=J_{\lambda t}^N,\quad L_t:=S_{\lambda t}.
\]
Define
\begin{equation}\label{rk:translated-model}
 X_t^{i,N}:=Y_i+L_t^N\pmod{\mathbf Z},\qquad
 \mu_t^N:=\tau_{L_t^N}\nu_N,\qquad \mu_t:=\tau_{L_t}\nu.
\end{equation}
For $a\in\R$ and $\xi\in H^{-s}(\mathbf T)$, translation is
\[
 \tau_a\xi:\left|\begin{aligned}
 H^s(\mathbf T)&\longrightarrow \R\\
 \varphi&\longmapsto \xi(\varphi(\cdot+a)).
 \end{aligned}\right.
\]
With $\partial\nu$ denoting the distributional derivative,
$\partial_a(\tau_a\nu)=-\tau_a\partial\nu$.
Fix $s>3/2$. Let $\mathcal B$ be the centered Gaussian
$H^{-s}(\mathbf T)$-valued $\nu$-bridge, independent of $\Pi$.
For $\varphi,\psi\in H^s(\mathbf T)$, it is characterized by
\begin{equation}\label{rk:bridge-covariance}
 \Ee\left[\mathcal B(\varphi)\mathcal B(\psi)\right]
       =\nu(\varphi\psi)-\nu(\varphi)\nu(\psi).
\end{equation}
Set $\Gamma_t:=\tau_{L_t}\mathcal B$ for all $\rho>0$.
When $\rho\le\alpha/2$, let $Z$ be the process of
Theorem~\ref{rk:tangent}(i) and define
\begin{equation}\label{rk:translated-limits}
 \Xi_t:=-Z_{\lambda t}\tau_{L_t}\partial\nu.
\end{equation}
When $d\ne0$, $Z$ is a nondegenerate symmetric stable process with
index $\kappa=\alpha/(1-\rho)<2$.
\begin{corollary}[Three sampling regimes]\label{rk:sharpness}
Assume \eqref{rk:strong-hall} and the monotonicity hypothesis of
Lemma~\ref{rk:hall-sufficient}, with $d\ne0$. For every $T<\infty$,
\begin{equation}\label{rk:three-regimes}
 \begin{cases}
 N^{\rho/\alpha}(\mu^N-\mu)
       \xRightarrow[N\to\infty]{\mathrm{st}(\sigma(\Pi))}\Xi,
                                  &0<\rho<\alpha/2,\\[2pt]
 \sqrt N(\mu^N-\mu)
       \xRightarrow[N\to\infty]{\mathrm{st}(\sigma(\Pi))}\Gamma+\Xi,
                                  &\rho=\alpha/2,\\[2pt]
 \sqrt N(\mu^N-\mu)
       \xRightarrow[N\to\infty]{\mathrm{st}(\sigma(\Pi))}\Gamma,
                                  &\rho>\alpha/2.
 \end{cases}
\end{equation}
These are functional limits in $D([0,T];H^{-s}(\mathbf T))$ with $J_1$,
jointly with $L$ using a common time change. In each regime the
conditional laws given $\sigma(\Pi)$ converge in probability to the
indicated kernel.
At the critical boundary, conditionally on the master, the limit is
a Gaussian field with mean $\Xi$ and the covariance of $\Gamma$.
At every fixed $t>0$, $\|\Xi_t\|_{H^{-s}}>0$ almost surely in the
first two regimes. In particular the root-$N$ field is not tight
at that time if $\rho<\alpha/2$.
\end{corollary}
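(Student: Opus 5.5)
The model is explicit, so I will argue directly from the exact identity
\[
 \mu_t^N-\mu_t
 =\tau_{L_t^N}\nu_N-\tau_{L_t}\nu
 =N^{-1/2}\tau_{L_t^N}\mathcal B_N
 +\bigl(\tau_{L_t^N}\nu-\tau_{L_t}\nu\bigr),
\]
rather than through Theorem~\ref{cr:theorem}. Translations are isometries of $H^{-s}$. The first term is therefore $N^{-1/2}$ times a translate of $\mathcal B_N$. Since $\mathcal B_N$ is independent of $\Pi$, the Hilbert-space CLT gives $\mathcal B_N\Rightarrow\mathcal B$ in $H^{-s}$ for $s>1/2$; we have $s>3/2$. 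The key path input is Theorem~\ref{rk:tangent}(i): since $\rho\le\alpha/2<\rho_c$ and the hypotheses of Lemma~\ref{rk:hall-sufficient} hold, we get $\|r_N^{-1}(J^N-S)-Z\|_\infty\to0$ in probability on $[0,\lambda T]$. In the regime $\rho>\alpha/2$ I only need $\|J^N-S\|_\infty=O_{\Pp}(b_N)=o_{\Pp}(N^{-1/2})$, which comes from \eqref{rk:rate}.

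For the deterministic second term, Taylor expansion of $a\mapsto\tau_a\nu$ gives
\[
 \tau_{a+\delta}\nu-\tau_a\nu+\delta\tau_a\partial\nu
 =\delta^2\int_0^1(1-\theta)\tau_{a+\theta\delta}\partial^2\nu\dd\theta.
\]
Because $p$ is smooth, $\partial^2\nu\in H^{-s}$ and the error is $O(\delta^2)$ uniformly in $a$. Apply this with $a=L_t$ and $\delta=L_t^N-L_t=r_NZ_{\lambda t}+o_{\Pp}(r_N)$ uniformly in $t$. This yields
\[
 \sup_{t\le T}\|r_N^{-1}(\tau_{L_t^N}\nu-\tau_{L_t}\nu)-\Xi_t\|_{H^{-s}}\to0
 \quad\text{in probability.}
\]
I use that $a\mapsto\tau_a\partial\nu$ is Lipschitz into $H^{-s}$, so replacing $\tau_{L_t}$ costs nothing. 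For the sampling term, $\|\tau_{L^N_t}\mathcal B_N-\tau_{L_t}\mathcal B_N\|_{H^{-s}}\le C|L^N_t-L_t|\,\|\mathcal B_N\|_{H^{-(s-1)}}$. Since $s-1>1/2$, the factor $\|\mathcal B_N\|_{H^{-(s-1)}}$ is tight, so the whole term is $o_{\Pp}(1)$. The sampling field is thus $\tau_{L}\mathcal B_N+o_{\Pp}(1)$ uniformly.

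The three regimes follow by rescaling the identity.
\begin{itemize}
\item For $\rho<\alpha/2$, multiplying by $r_N^{-1}=N^{\rho/\alpha}$ makes the sampling term $N^{\rho/\alpha-1/2}O_{\Pp}(1)\to0$, so the limit is $\Xi$.
\item At $\rho=\alpha/2$ we have $r_N=N^{-1/2}$, and the limit is $\tau_L\mathcal B_N+\Xi+o_{\Pp}(1)$.
\item For $\rho>\alpha/2$ the signal term is $\sqrt N\,O_{\Pp}(b_N)\to0$, leaving $\tau_L\mathcal B_N$.
\end{itemize}

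Stable and conditional-kernel convergence, jointly with $L$, go as at the end of the proof of Theorem~\ref{cr:theorem}. For a fixed master, the map $\xi\mapsto(L,\tau_L\xi+\Xi)$ from $H^{-s}$ into $D([0,T];\R\times H^{-s})$ is uniformly $1$-Lipschitz. The law of $\mathcal B_N$ does not depend on $\Pi$. Hence the conditional laws of the approximation converge for every master. The $o_{\Pp}(1)$ remainders are controlled by $\Ee[1\wedge\varepsilon_N\mid\sigma(\Pi)]\to0$ in $L^1$. These errors enter through a bounded-Lipschitz bound, giving convergence of the kernels in probability. At criticality, conditionally on $\Pi$, the limit is $\tau_L\mathcal B$ shifted by the $\Pi$-measurable $\Xi$, which is Gaussian with mean $\Xi$ and the covariance of $\Gamma$.

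The main obstacle is the nondegeneracy claim $\|\Xi_t\|_{H^{-s}}>0$ a.s. for fixed $t>0$. Since $p$ is nonconstant, $\partial\nu=p'\dd x\ne0$, and translation is injective, so $\|\Xi_t\|=|Z_{\lambda t}|\,\|\partial\nu\|_{H^{-s}}$. It therefore suffices that $\Pp(Z_{\lambda t}=0)=0$. This holds because $Z$ is a nondegenerate symmetric $\kappa$-stable process with $d\ne0$ (Theorem~\ref{rk:tangent}(i)), and $Z_{\lambda t}$ then has a continuous density for $\lambda t>0$. Non-tightness of the root-$N$ field for $\rho<\alpha/2$ follows from the first-regime limit. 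Indeed $\sqrt N(\mu_t^N-\mu_t)=N^{1/2-\rho/\alpha}\bigl(\Xi_t+o_{\Pp}(1)\bigr)$, and $\Xi_t\ne0$ a.s. forces its norm to diverge in probability. I will need to check that fixed-time evaluation is legitimate: $Z$ has no fixed jump times, so evaluation at $t$ is a.s. continuous for $J_1$, though the uniform convergence here makes this automatic.
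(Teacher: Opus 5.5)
Your proposal is correct and follows the paper's proof essentially step for step. It uses the same exact decomposition \eqref{rk:exact-decomposition}, the Taylor identity for $a\mapsto\tau_a\nu$, Theorem~\ref{rk:tangent} for the signal error, the $1$-Lipschitz map $b\mapsto(L,\tau_Lb+\Xi)$ for the conditional kernels, and $\Pp(Z_{\lambda t}=0)=0$ for nondegeneracy. The only difference is minor: you bound $\tau_{L^N}\mathcal B_N-\tau_L\mathcal B_N$ by $\|\tau_z\xi-\xi\|_{H^{-s}}\le2\pi|z|\,\|\xi\|_{H^{-(s-1)}}$ together with tightness of $\mathcal B_N$ in $H^{-(s-1)}$, which uses the margin $s>3/2$. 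The paper instead uses strong continuity of translations uniformly on compacts, \eqref{rk:translation-compact}, which needs only $s>1/2$ for that step.
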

\begin{proof}
The variables $\delta_{Y_i}-\nu$ are centered, independent and
identically distributed in $H^{-s}$, with
\[
 \sup_{y\in\mathbf T}\|\delta_y\|_{H^{-s}}^2
 =\sum_{k\in\Z}(1+k^2)^{-s}<\infty
 \qquad(s>1/2).
\]
The Hilbert-space CLT gives $\mathcal B_N\weak\mathcal B$, independently
of the master, with covariance \eqref{rk:bridge-covariance}. Strong continuity and the translation isometry give,
for every compact $K\subset H^{-s}$,
\begin{equation}\label{rk:translation-compact}
 \sup_{b\in K,\ |z|\le\delta}\|\tau_zb-b\|_{H^{-s}}
                         \xrightarrow[\delta\downarrow0]{}0.
\end{equation}
Tightness of $\mathcal B_N$ then gives, for any real path errors
$\|\delta_N\|_\infty\xrightarrow[N\to\infty]{}0$ in probability,
\begin{equation}\label{rk:bridge-translation-error}
 \sup_{t\le T}\|\tau_{L_t+\delta_N(t)}\mathcal B_N
                   -\tau_{L_t}\mathcal B_N\|_{H^{-s}}\xrightarrow[N\to\infty]{}0
                       \quad\text{in probability}.
\end{equation}
Since $\nu$ has a smooth density, Taylor's identity in $H^{-s}$ reads
\[
 \tau_{a+z}\nu-\tau_a\nu+z\tau_a\partial\nu
 =z^2\int_0^1(1-\theta)\tau_{a+\theta z}\partial^2\nu\dd\theta.
\]
The translation isometry therefore gives, for $a,z\in\R$,
\begin{equation}\label{rk:translation-taylor}
 \|\tau_{a+z}\nu-\tau_a\nu+z\tau_a\partial\nu\|_{H^{-s}}
       \le\tfrac12z^2\|\partial^2\nu\|_{H^{-s}}.
\end{equation}
Set $\delta_N=L^N-L$. Then
\begin{equation}\label{rk:exact-decomposition}
 \mu_t^N-\mu_t
 \overset{\eqref{rk:translated-model}}{=}
 N^{-1/2}\tau_{L_t^N}\mathcal B_N
                         +(\tau_{L_t^N}-\tau_{L_t})\nu.
\end{equation}
For $\rho\le\alpha/2<1-\alpha/2$, the rank theorem gives
$r_N^{-1}\delta_N\xrightarrow[N\to\infty]{} Z_{\lambda\cdot}$ uniformly in probability.
Moreover,
\begin{align*}
 \sup_{t\le T}\left\|r_N^{-1}(\tau_{L_t^N}-\tau_{L_t})\nu-\Xi_t\right\|_{H^{-s}}
 &\overset{\eqref{rk:translation-taylor}}{\le}
 \|\partial\nu\|_{H^{-s}}
      \|r_N^{-1}\delta_N-Z_{\lambda\cdot}\|_\infty\\
 &\quad+\frac{\|\partial^2\nu\|_{H^{-s}}}{2r_N}\|\delta_N\|_\infty^2
 \xrightarrow[N\to\infty]{\Pp}0.
\end{align*}
Together with \eqref{rk:bridge-translation-error} and
\eqref{rk:exact-decomposition}, this yields
\begin{equation}\label{rk:field-expansion}
 r_N^{-1}(\mu^N-\mu)
   =\frac{N^{-1/2}}{r_N}\tau_L\mathcal B_N+\Xi+o_{\Pp}(1)
           \quad\text{in the uniform }H^{-s}\text{ norm}.
\end{equation}
The Taylor error is $O_{\Pp}(r_N)$ after division by $r_N$.
The bridge coefficient tends to zero below the threshold and equals one
at the threshold. For $\rho>\alpha/2$, Theorem~\ref{rk:tangent} gives
$\|\delta_N\|_\infty=O_{\Pp}(b_N)=o_{\Pp}(N^{-1/2})$,
and the decomposition gives
$\sqrt N(\mu^N-\mu)=\tau_L\mathcal B_N+o_{\Pp}(1)$ uniformly.
For each fixed master, the map
\[
 \left|\begin{aligned}
 H^{-s}&\longrightarrow D([0,T];\R\times H^{-s})\\
 b&\longmapsto (L,\tau_Lb+\Xi)
 \end{aligned}\right.
\]
is continuous into vector $J_1$, since
\[
 \sup_{t\le T}\|(\tau_{L_t}b+\Xi_t)-(\tau_{L_t}b'+\Xi_t)\|_{H^{-s}}
       =\|b-b'\|_{H^{-s}}.
\]
Since $\mathcal B_N$ is independent of the master, the conditional laws
of the critical approximation converge almost surely. Omitting $\Xi$ gives the
supercritical case. A vanishing bridge coefficient gives the subcritical
case. If a remainder $E_N$ tends to zero uniformly in probability, then
\[
 \Ee\left[1\wedge\|E_N\|_\infty\Bigm|\sigma(\Pi)\right]\longrightarrow0
 \quad\text{in }L^1(\Pp),
\]
which transfers these conditional kernels and joint stable limits. At criticality, for $t,u\in[0,T]$, the covariance is
\begin{equation}\label{rk:conditional-covariance}
 \nu\big(\varphi(\cdot+L_t)\psi(\cdot+L_u)\big)
 -\nu\big(\varphi(\cdot+L_t)\big)\nu\big(\psi(\cdot+L_u)\big).
\end{equation}
Since $\partial\nu\ne0$ and $\Pp(Z_{\lambda t}=0)=0$,
$\|\Xi_t\|_{H^{-s}}=|Z_{\lambda t}|\|\partial\nu\|_{H^{-s}}>0$ almost
surely. For $\rho<\alpha/2$, write
$\sqrt N(\mu_t^N-\mu_t)=\sqrt N r_N(\Xi_t+o_{\Pp}(1))$.
Since $\sqrt N r_N\to\infty$ and $\|\Xi_t\|_{H^{-s}}>0$ almost surely,
\[
 \Pp\bigl(\|\sqrt N(\mu_t^N-\mu_t)\|_{H^{-s}}>M\bigr)\longrightarrow1
 \qquad\text{for every }M<\infty,
\]
which proves non-tightness.
\end{proof}
For $\nu(\dd x)=(1+\varepsilon\cos(2\pi x))\dd x$,
$0<|\varepsilon|<1$, the critical shift satisfies
\[
 \big(\Xi_t(\sin(2\pi\cdot)),\Xi_t(\cos(2\pi\cdot))\big)
 =\pi\varepsilon Z_{\lambda t}
      \big(\cos(2\pi L_t),-\sin(2\pi L_t)\big).
\]
Thus these two tests detect every nonzero displacement. For uniform
$\nu$, $\partial\nu=0$ and the shift vanishes.
\section{A clock-coupled counterpoint}
\label{clk:section}
We compare two empirical centers for a finite-state population translated
by a stable process at a random clock, then construct rank and grid
couplings with identical microscopic laws. These translation models are
separate from the endogenous systems of sections~\ref{mf:section}
and \ref{rk:section}.
\subsection{The population and its clock}
Fix $m\ge2$ and distinct $y_1,\ldots,y_m\in\R$. With $(e_a)_{a=1}^m$
the standard basis of $\R^m$, set
\[
 v_{ab}:=e_b-e_a\quad(a\ne b),\qquad
 \Delta_m:=\left\{p\in[0,1]^m:\sum_{a=1}^mp_a=1\right\},\qquad
 \mathsf T:=\left\{g\in\R^m:\sum_{a=1}^mg_a=0\right\}.
\]
All sums over $a,b$ run from $1$ to $m$. Let $\mathcal U\subset\R^m$
be an open neighborhood of $\Delta_m$, and let
$q_{ab}\in C_b^2(\mathcal U;[0,\infty))$ for $a\ne b$.
Thus $q_{ab}$ and its derivatives through order two are continuous and
bounded. Define
\begin{equation}\label{clk:rates}
 \begin{gathered}
 r_{ab}:\left|\begin{aligned}
 \Delta_m&\longrightarrow [0,\infty)\\
 p&\longmapsto p_aq_{ab}(p),
 \end{aligned}\right.
 \qquad
 F:\left|\begin{aligned}
 \Delta_m&\longrightarrow \mathsf T\\
 p&\longmapsto \displaystyle\sum_{a\ne b}v_{ab}r_{ab}(p),
 \end{aligned}\right.\\[1ex]
 \lambda:\left|\begin{aligned}
 \Delta_m&\longrightarrow [0,\infty)\\
 p&\longmapsto \displaystyle\sum_{a\ne b}r_{ab}(p),
 \end{aligned}\right.
 \qquad
 Q:\left|\begin{aligned}
 \Delta_m&\longrightarrow \R^{m\times m}\\
 p&\longmapsto \displaystyle\sum_{a\ne b}v_{ab}v_{ab}^{\top}r_{ab}(p).
 \end{aligned}\right.
 \end{gathered}
\end{equation}
Fix $p_0\in\Delta_m$ and let $p\in C^1([0,T];\Delta_m)$ solve
$\dot p_t=F(p_t)$, $p(0)=p_0$. Assume
$\lambda_*:=\inf_{t\le T}\lambda(p_t)>0$.
For independent unit-rate Poisson processes $(P_{ab})_{a\ne b}$, define
\begin{equation}\label{clk:chain}
 \begin{aligned}
 C_{ab}^N(t)&:=P_{ab}\left(N\int_0^t r_{ab}(p_u^N)\dd u\right),&
 p_t^N&:=p_0^N+\frac1N\sum_{a\ne b}v_{ab}C_{ab}^N(t),\\
 T_t^N&:=\frac1N\sum_{a\ne b}C_{ab}^N(t),&
 A_t&:=\int_0^t\lambda(p_u)\dd u.
 \end{aligned}
\end{equation}
Here $p_0^N\in\Delta_m\cap N^{-1}\Z^m$ is independent of the Poisson
processes and
$G_0^N:=\sqrt N(p_0^N-p_0)\weak G_0$, where $G_0$ is a centered Gaussian
vector in $\mathsf T$, possibly zero. Bounded rates ensure nonexplosion.
The factor $p_a$ keeps $p^N$ in $\Delta_m$. Derivatives of $F$ and
$\lambda$ below act on $\mathsf T$.
Let $S$ be a symmetric $\alpha$-stable L\'evy process, $0<\alpha<2$,
independent of the population array and its driving Poisson processes,
with $\Ee\left[e^{iuS_t}\right]=e^{-t|u|^\alpha}$ for $t\ge0$, $u\in\R$. Set
\begin{equation}\label{clk:signals}
 L_t^N:=S_{T_t^N},\qquad L_t:=S_{A_t},\qquad
 U_k^N:=N^{1/\alpha}(S_{k/N}-S_{(k-1)/N}),\quad k\ge1.
\end{equation}
For each $N$, the $U_k^N$ are independent with law $S_1$, independent
of the chain, and
$L_t^N=N^{-1/\alpha}\sum_{k\le NT_t^N}U_k^N$.
\begin{lemma}[Joint population and clock fluctuations]\label{clk:joint-clt}
Set $G^N:=\sqrt N(p^N-p)$ and $H^N:=\sqrt N(T^N-A)$.
Then $(G^N,H^N)\weak(G,H)$ in $D([0,T];\mathsf T\times\R)$.
The limit is continuous, Gaussian, independent of $S$, and solves
\begin{equation}\label{clk:limit-sde}
 \begin{split}
 \dd G_t&=DF(p_t)G_t\dd t+
       \sum_{a\ne b}v_{ab}\sqrt{r_{ab}(p_t)}\dd B_t^{ab},\\
 \dd H_t&=D\lambda(p_t)G_t\dd t+
       \sum_{a\ne b}\sqrt{r_{ab}(p_t)}\dd B_t^{ab},
       \qquad (G(0),H(0))=(G_0,0),
 \end{split}
\end{equation}
where the $B^{ab}$ are independent standard Brownian motions,
independent of $G_0$. The covariance density of the martingale parts is
\begin{equation}\label{clk:bracket}
 \sum_{a\ne b}r_{ab}(p_t)
 \binom{v_{ab}}1\binom{v_{ab}}1^{\!\top}
 \overset{\eqref{clk:rates}}{=}
 \begin{pmatrix}Q(p_t)&F(p_t)\\F(p_t)^\top&\lambda(p_t)\end{pmatrix}.
\end{equation}
Thus the shared counts generally correlate $G$ and $H$.
\end{lemma}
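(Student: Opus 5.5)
We prove Lemma~\ref{clk:joint-clt} with the standard Kurtz random time-change and martingale CLT machinery \citep{EthierKurtz1986}. Write $\widetilde P_{ab}(x):=P_{ab}(x)-x$ for the compensated unit Poisson processes and $\Lambda_{ab}^N(t):=\int_0^t r_{ab}(p_u^N)\dd u$. Then
\[
 p_t^N=p_0^N+\int_0^tF(p_u^N)\dd u+\frac1N\sum_{a\ne b}v_{ab}\widetilde P_{ab}(N\Lambda_{ab}^N(t)),\qquad
 T_t^N=\int_0^t\lambda(p_u^N)\dd u+\frac1N\sum_{a\ne b}\widetilde P_{ab}(N\Lambda_{ab}^N(t)).
\]
Set $W_{ab}^N(x):=N^{-1/2}\widetilde P_{ab}(Nx)$. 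Donsker's theorem for Poisson processes gives $(W_{ab}^N)_{a\ne b}\Rightarrow(W_{ab})$, a vector of independent standard Brownian motions. This convergence is joint with $G_0^N\Rightarrow G_0$, since $p_0^N$ is independent of the Poisson processes.

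\emph{Step 1: law of large numbers.} Because the rates are bounded and $\sup_x|\widetilde P_{ab}(Nx)|/N\to0$ on compacts almost surely, Gronwall and the Lipschitz property of $F$ give $\sup_{t\le T}|p_t^N-p_t|\to0$ in probability, and in fact $O_{\Pp}(N^{-1/2})$. Hence $\Lambda_{ab}^N\to\Lambda_{ab}:=\int_0^\cdot r_{ab}(p_u)\dd u$ uniformly in probability.

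\emph{Step 2: linearization.} Subtract the limit equations and multiply by $\sqrt N$:
\[
 G_t^N=G_0^N+\int_0^t\sqrt N\{F(p_u^N)-F(p_u)\}\dd u+\sum_{a\ne b}v_{ab}W_{ab}^N(\Lambda_{ab}^N(t)),\qquad
 H_t^N=\int_0^t\sqrt N\{\lambda(p_u^N)-\lambda(p_u)\}\dd u+\sum_{a\ne b}W_{ab}^N(\Lambda_{ab}^N(t)).
\]
Since $q_{ab}\in C_b^2$, a Taylor expansion yields $\sqrt N\{F(p^N)-F(p)\}=DF(p)G^N+O(N^{-1/2}|G^N|^2)$, and likewise for $\lambda$, with derivatives acting on $\mathsf T$ because $p^N-p\in\mathsf T$. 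Tightness of $\sup_t|G_t^N|$ follows from Gronwall, because the noise terms are tight by Doob's inequality: $\Ee[\sup_{x\le C}|W^N_{ab}(x)|^2]\le 4C$. The quadratic remainder is therefore $O_{\Pp}(N^{-1/2})$.

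\emph{Step 3: time change and continuous mapping.} By Step~1 and the continuity of Brownian paths, $W_{ab}^N\circ\Lambda_{ab}^N\Rightarrow W_{ab}\circ\Lambda_{ab}$ jointly; this is the random time-change lemma, valid because the composition map is continuous at continuous limits. The map sending $(g_0,(w_{ab}))$ to the solution $(G,H)$ of the linear integral equation
\[
 G_t=g_0+\int_0^tDF(p_u)G_u\dd u+\sum_{a\ne b}v_{ab}w_{ab}(t),\qquad H_t=\int_0^tD\lambda(p_u)G_u\dd u+\sum_{a\ne b}w_{ab}(t)
\]
is Lipschitz for the uniform norm by Gronwall. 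Continuous mapping together with Slutsky's theorem (to absorb the Taylor remainder) then gives $(G^N,H^N)\Rightarrow(G,H)$ in the uniform topology, hence in $J_1$. Writing $W_{ab}(\Lambda_{ab}(t))=\int_0^t\sqrt{r_{ab}(p_u)}\dd B_u^{ab}$ with independent Brownian motions $B^{ab}$ yields \eqref{clk:limit-sde}. The limit is continuous and Gaussian, because it is a linear functional of the Gaussian inputs $(G_0,B)$. It is independent of $S$, because $S$ is independent of the whole population array.

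\emph{Step 4: the bracket.} The covariation density of the martingale part $\sum_{a\ne b}(v_{ab},1)\sqrt{r_{ab}}\dd B^{ab}$ is $\sum_{a\ne b}r_{ab}(v_{ab},1)(v_{ab},1)^\top$. Its blocks are $Q$, $\sum_{a\ne b}v_{ab}r_{ab}=F$ and $\sum_{a\ne b}r_{ab}=\lambda$ by \eqref{clk:rates}, which is \eqref{clk:bracket}.

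\emph{Main obstacle.} The only delicate point is Step~3, the random time change inside the noise. It is handled by proving tightness first (Step~2) and then using continuity of the limit for composition. Alternatively, one can apply the martingale functional CLT directly to the pair, whose predictable bracket converges to $\int_0^t(\cdot)(p_u)\dd u$ in probability and whose jumps are $O(N^{-1/2})$.
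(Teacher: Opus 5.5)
Your proof is correct, but it identifies the noise limit by a different mechanism than the paper. The paper works with the compensated counting martingales $M^{ab,N}_t=N^{-1/2}\bigl(C^N_{ab}(t)-N\int_0^tr_{ab}(p^N_u)\dd u\bigr)$, which are exactly your $W^N_{ab}(\Lambda^N_{ab}(t))$. Its Gronwall tightness bound and Taylor linearization are the same as your Step~2.

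The two proofs part ways in how the noise converges. You use Donsker's theorem for the unit Poisson processes, the random time-change lemma and continuity of the linear solution map. The paper instead applies the martingale functional CLT directly, which is the alternative you mention at the end. There, the predictable brackets $\int_0^tr_{ab}(p^N_u)\dd u$ converge uniformly in probability, jumps of size $N^{-1/2}$ give Lindeberg's condition, and a joint CLT with the initial variable produces the Gaussian martingales independent of $G_0$.

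Your route exploits the time-change construction \eqref{clk:chain}. It needs only the classical invariance principle for processes with i.i.d.\ increments. The mutual independence of the $B^{ab}$ and their independence from $G_0$ come automatically: at finite $N$ the rescaled $P_{ab}$ are independent of each other and of $p_0^N$, and all dependence enters through $\Lambda^N_{ab}$, whose limit is deterministic. In the paper's route, the $M^{ab,N}$ do depend on $p_0^N$ through the time change. Getting independence of the limit from $G_0$ therefore requires a conditional or stable form of the martingale CLT, which the paper's joint CLT uses tacitly. In exchange, the paper's argument uses only the intensities of the counting processes in the working filtration. It would survive constructions of the chain that lack a Poisson time-change representation.

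Two small points in your Step~3. First, state the weak convergence in $J_1$ rather than in the nonseparable uniform topology. Since $J_1$ convergence to a continuous limit is uniform, the Lipschitz solution map is $J_1$-continuous at continuous inputs, so continuous mapping applies. Second, the identity $W_{ab}\circ\Lambda_{ab}=\int_0^\cdot\sqrt{r_{ab}(p_u)}\dd B^{ab}_u$ holds in law, or after enlarging the space when $r_{ab}(p_u)$ vanishes on an interval. That suffices, since only the law of the limit is asserted.
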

\begin{proof}
The compensated counting martingales
\[
 M_t^{ab,N}:=N^{-1/2}\left(C_{ab}^N(t)
                 -N\int_0^t r_{ab}(p_u^N)\dd u\right)
\]
are orthogonal, have jumps $N^{-1/2}$, and satisfy
\begin{equation}\label{clk:martingale-brackets}
 \langle M^{ab,N}\rangle_t
 =\int_0^t r_{ab}(p_u^N)\dd u, \left\langle\sum_{a\ne b}v_{ab}M^{ab,N},
                   \sum_{a\ne b}M^{ab,N}\right\rangle_t
 =\sum_{a\ne b}v_{ab}\int_0^t r_{ab}(p_u^N)\dd u \overset{\eqref{clk:rates}}{=}\int_0^t F(p_u^N)\dd u.
\end{equation}
Since all rates are bounded on $\Delta_m$, Doob's inequality gives
\[
 \Ee\left[\|M^{ab,N}\|_\infty^2\right]
 \le4\Ee\left[\langle M^{ab,N}\rangle_T\right]
 \overset{\eqref{clk:martingale-brackets}}{\le}
 4T\sup_{p\in\Delta_m}r_{ab}(p)<\infty.
\]
By the population equation,
\[
 G_t^N\overset{\eqref{clk:chain}}{=}
 G_0^N+\int_0^t\sqrt N\{F(p_u^N)-F(p_u)\}\dd u
                    +\sum_{a\ne b}v_{ab}M_t^{ab,N}.
\]
Lipschitz continuity and Gronwall's inequality give
\begin{equation}\label{clk:population-bound}
 \begin{aligned}
 \sup_{u\le t}|G_u^N|
 &\le |G_0^N|+\sum_{a\ne b}|v_{ab}|\|M^{ab,N}\|_\infty
       +\operatorname{Lip}(F)\int_0^t\sup_{r\le u}|G_r^N|\dd u,\\
 \|G^N\|_\infty
 &\le e^{T\operatorname{Lip}(F)}
       \left(|G_0^N|+\sum_{a\ne b}|v_{ab}|\|M^{ab,N}\|_\infty\right)
 =O_{\Pp}(1).
 \end{aligned}
\end{equation}
Hence $\|p^N-p\|_\infty=O_{\Pp}(N^{-1/2})$. Similarly,
\[
 \begin{aligned}
 H_t^N&\overset{\eqref{clk:chain}}{=}
 \int_0^t\sqrt N\{\lambda(p_u^N)-\lambda(p_u)\}\dd u
             +\sum_{a\ne b}M_t^{ab,N},\\
 \|H^N\|_\infty
 &\le T\operatorname{Lip}(\lambda)\|G^N\|_\infty
              +\sum_{a\ne b}\|M^{ab,N}\|_\infty
 \overset{\eqref{clk:population-bound}}{=}O_{\Pp}(1).
 \end{aligned}
\]
The brackets converge uniformly in probability to the integral of
\eqref{clk:bracket}, and bounded vanishing jumps give Lindeberg's
condition. Taylor's formula gives
\[
 \begin{aligned}
 \binom{G_t^N}{H_t^N}
 &=\binom{G_0^N}{0}
 +\int_0^t
   \begin{pmatrix}DF(p_u)&0\\D\lambda(p_u)&0\end{pmatrix}
   \binom{G_u^N}{H_u^N}\dd u
 +\sum_{a\ne b}\binom{v_{ab}}1M_t^{ab,N}+\epsilon_t^N,\\
 \|\epsilon^N\|_\infty
 &\le C_TN^{-1/2}\|G^N\|_\infty^2
 \overset{\eqref{clk:population-bound}}{=}O_{\Pp}(N^{-1/2})
                          \xrightarrow[N\to\infty]{\Pp}0.
 \end{aligned}
\]
The joint CLT for the initial variable and martingales
\citep{EthierKurtz1986,Rebolledo1980,JacodShiryaev2003} gives the Gaussian
martingales in \eqref{clk:limit-sde}, independent of $G_0$.
Continuity of the linear integral equation proves the claim.
Independence of the population from $S$ passes to the limit.
\end{proof}
Write
\[
 P_t:=\operatorname{Cov}(G_t)\in\R^{m\times m},\qquad
 c_t:=\operatorname{Cov}(G_t,H_t)\in\mathsf T,\qquad
 v_t:=\operatorname{Var}(H_t)\in[0,\infty).
\]
It\^o's product rule gives, with a martingale $M$,
\[
 \begin{aligned}
 \dd(G_tH_t)
 &=H_t\dd G_t+G_t\dd H_t+\dd\langle G,H\rangle_t\\
 &\overset{\eqref{clk:limit-sde},\,\eqref{clk:bracket}}{=}
 \left\{DF(p_t)G_tH_t
       +G_tG_t^\top D\lambda(p_t)^\top+F(p_t)\right\}\dd t+\dd M_t.
 \end{aligned}
\]
Taking expectations and using the same rule for $GG^\top$ and $H^2$
gives
\begin{equation}\label{clk:covariance}
 \begin{aligned}
 &\dot P_t=DF(p_t)P_t+P_tDF(p_t)^\top+Q(p_t),\qquad \dot c_t=DF(p_t)c_t+P_tD\lambda(p_t)^\top+F(p_t),\\
 &\dot v_t=2D\lambda(p_t)c_t+\lambda(p_t),\qquad P_0=\operatorname{Cov}(G_0),\qquad c_0=0,\qquad v_0=0.
 \end{aligned}
\end{equation}
For $0\le r\le t\le T$, let $\Phi(t,r)\in\mathcal L(\mathsf T)$ solve
$\partial_t\Phi(t,r)=DF(p_t)\Phi(t,r)$ and
$\Phi(r,r)=\operatorname{Id}_{\mathsf T}$, where
$\mathcal L(\mathsf T)$ is the space of linear endomorphisms of $\mathsf T$.
Variation of constants and stochastic Fubini give
\begin{equation}\label{clk:variation-constants}
 \begin{aligned}
 G_t&\overset{\eqref{clk:limit-sde}}{=}\Phi(t,0)G_0+
    \sum_{a\ne b}\int_0^t\Phi(t,r)v_{ab}\sqrt{r_{ab}(p_r)}\dd B_r^{ab},\\
 H_t&=\int_0^tD\lambda(p_u)\Phi(u,0)G_0\dd u
     +\sum_{a\ne b}\int_0^t k_{ab}(t,r)\sqrt{r_{ab}(p_r)}\dd B_r^{ab},\\
 k_{ab}(t,r)&:=1+\int_r^tD\lambda(p_u)\Phi(u,r)v_{ab}\dd u=1+O(t-r).
 \end{aligned}
\end{equation}
For $t>0$, choose $\delta\in(0,t)$ such that
$k_{ab}(t,r)^2\ge1/4$ for all $a\ne b$ and $r\in[t-\delta,t]$.
Independence of $G_0$ and the Brownian motions gives
\begin{equation}\label{clk:positive-clock-variance}
 \begin{aligned}
 v_t
 &\overset{\eqref{clk:variation-constants}}{\ge}
 \sum_{a\ne b}\int_{t-\delta}^t k_{ab}(t,r)^2r_{ab}(p_r)\dd r\\
 &\ge\frac14\int_{t-\delta}^t\sum_{a\ne b}r_{ab}(p_r)\dd r\\
 &\overset{\eqref{clk:rates}}{=}\frac14\int_{t-\delta}^t\lambda(p_r)\dd r
 \ge\lambda_*\delta/4>0.
 \end{aligned}
\end{equation}
\subsection{Stable increments at a fluctuating clock}
\begin{proposition}[The fixed-time clock tangent]\label{clk:tangent}
For $0<t_1<\cdots<t_k\le T$ deterministic,
\begin{equation}\label{clk:tangent-limit}
 \left((G_{t_j}^N,H_{t_j}^N)_{j=1}^k,
       (N^{1/(2\alpha)}(L_{t_j}^N-L_{t_j}))_{j=1}^k\right)
 \xRightarrow[N\to\infty]{}
 \left((G_{t_j},H_{t_j})_{j=1}^k,
       (|H_{t_j}|^{1/\alpha}\xi_j)_{j=1}^k\right)
\end{equation}
stably relative to $\sigma(S)$. Here $\xi_1,\ldots,\xi_k$ are independent
standard symmetric $\alpha$-stable variables, independent of $(G,H,S)$.
For $Z_j:=|H_{t_j}|^{1/\alpha}\xi_j$ and $u_1,\ldots,u_k\in\R$,
\[
 \Ee\left[e^{i\sum_{j=1}^ku_jZ_j}\Bigm| G,H,S\right]
 =\Ee\left[e^{i\sum_{j=1}^ku_jZ_j}\Bigm| H\right]
 =\exp\left\{-\sum_{j=1}^k|H_{t_j}||u_j|^\alpha\right\}.
\]
The $Z_j$ are independent given $H$. Without conditioning, their random
scales can make them dependent.
\end{proposition}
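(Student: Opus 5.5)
The plan is to reduce everything to a characteristic-function computation conditional on the population chain, using that the chain is independent of $S$. The fixed-time signal error is
\[
 L_{t_j}^N-L_{t_j}=S_{T_{t_j}^N}-S_{A_{t_j}},
 \qquad T_{t_j}^N-A_{t_j}=N^{-1/2}H_{t_j}^N.
\]
Conditionally on the chain, with $h_j:=T_{t_j}^N-A_{t_j}$ fixed, stationarity of increments gives that $S_{A_{t_j}+h_j}-S_{A_{t_j}}$ has the law of $\operatorname{sgn}(h_j)\,|h_j|^{1/\alpha}S_1$. Symmetry then removes the sign. After multiplying by $N^{1/(2\alpha)}$, this increment has law $|H_{t_j}^N|^{1/\alpha}\xi$.

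Since $\lambda(p_u)\ge\lambda_*>0$, the clock $A$ is strictly increasing, so the points $a_j:=A_{t_j}$ are distinct. Fix $\varepsilon>0$ smaller than half their minimal gap, so that the windows $I_j^\varepsilon:=[a_j-\varepsilon,a_j+\varepsilon]$ are disjoint.

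\textbf{Step 1: approximate the $\sigma(S)$-measurable test variable.} To prove stable convergence it suffices to take a bounded $\sigma(S)$-measurable $W$, bounded Lipschitz $\phi$ of $(G_{t_j}^N,H_{t_j}^N)_j$, and $u\in\R^k$, and to show
\[
 \Ee\Bigl[W\phi(\cdot)\,e^{i\sum_ju_jN^{1/(2\alpha)}(L_{t_j}^N-L_{t_j})}\Bigr]
 \longrightarrow
 \Ee\Bigl[W\phi\bigl((G_{t_j},H_{t_j})_j\bigr)\,e^{-\sum_j|H_{t_j}||u_j|^\alpha}\Bigr].
\]
Let $\mathcal S^\varepsilon$ be generated by the increments of $S$ on $[0,\infty)\setminus\bigcup_jI_j^\varepsilon$, and set $W_\varepsilon:=\Ee[W\mid\mathcal S^\varepsilon]$. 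These $\sigma$-fields increase as $\varepsilon\downarrow0$. Because $S$ is continuous in probability, their limit coincides with $\sigma(S)$ up to null sets. Martingale convergence therefore gives $W_\varepsilon\to W$ in $L^1$, and the error from replacing $W$ by $W_\varepsilon$ is at most $\|\phi\|_\infty\Ee|W-W_\varepsilon|$, uniformly in $N$.

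\textbf{Step 2: compute the conditional characteristic function.} By Lemma~\ref{clk:joint-clt}, $\|T^N-A\|_\infty=O_{\Pp}(N^{-1/2})$, so the event $E_N:=\{\max_j|h_j|<\varepsilon\}$ has probability tending to one. On $E_N$, each increment $S_{a_j+h_j}-S_{a_j}$ lives inside $I_j^\varepsilon$. The three objects involved are independent:
\begin{itemize}[label={}]
\item the window increments of $S$ (all $k$ windows together, mutually independent across $j$ since the windows are disjoint);
\item $\mathcal S^\varepsilon$;
\item the chain.
\end{itemize}
Conditioning on the chain and on $\mathcal S^\varepsilon$, independent increments and the computation above give, on $E_N$,
\[
 \Ee\Bigl[e^{i\sum_ju_jN^{1/(2\alpha)}(L_{t_j}^N-L_{t_j})}\Bigm|\text{chain},\mathcal S^\varepsilon\Bigr]
 =\exp\Bigl\{-\sum_j|H_{t_j}^N||u_j|^\alpha\Bigr\}.
\]
Hence the expectation with $W_\varepsilon$ equals $\Ee\bigl[W_\varepsilon\,\phi\,e^{-\sum_j|H_{t_j}^N||u_j|^\alpha}\bigr]+o(1)$.

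\textbf{Step 3: pass to the limit.} The variable $W_\varepsilon$ is independent of the chain, and $(G^N,H^N)$ converges in law at the fixed times by Lemma~\ref{clk:joint-clt}, whose limit is continuous so fixed-time projections are continuous. The integrand is a bounded continuous function of $(G_{t_j}^N,H_{t_j}^N)_j$. Therefore the expression converges to $\Ee[W_\varepsilon]\,\Ee\bigl[\phi\,e^{-\sum_j|H_{t_j}||u_j|^\alpha}\bigr]$. Letting $\varepsilon\downarrow0$ replaces $\Ee[W_\varepsilon]$ by $\Ee[W]$. Independence of the limit $(G,H)$ from $S$, also from the lemma, then gives the right-hand side displayed in Step~1.

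\textbf{Step 4: read off the limit law.} Identifying the limit characteristic function yields $Z_j=|H_{t_j}|^{1/\alpha}\xi_j$, with $\xi_j$ i.i.d. standard symmetric stable and independent of $(G,H,S)$. The stated conditional characteristic function and the conditional independence of the $Z_j$ given $H$ follow directly. Unconditional dependence can arise through the correlated random scales $|H_{t_j}|$.

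The main obstacle is Step~1: decoupling $W$ from the few increments of $S$ near the deterministic clock points. The window approximation handles it, because the random endpoint $A_{t_j}+h_j$ is determined by the chain, which is independent of $S$, and lies in the window with probability tending to one.
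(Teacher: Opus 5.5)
Your proof is correct. Its core is the paper's: deterministic disjoint windows around the distinct points $a_j=A_{t_j}$, an event of probability tending to one on which each clock error $T_{t_j}^N-a_j$ stays in its window, and conditioning on the chain so that stable scaling and symmetry give $\exp\{-|H_{t_j}^N||u_j|^\alpha\}$. You differ in how you decouple the $\sigma(S)$-measurable test from the window increments.

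The paper lets the windows shrink with $N$, with half-width $b_N=N^{-1/4}$ and event $\{\max_j|H_{t_j}^N|\le N^{1/4}\}$. It builds an erased path $S^{\circ,N}$, independent of the window increments, with $\|S-S^{\circ,N}\|_{\infty,[0,R]}=O_{\Pp}(b_N^{1/\alpha})$. It tests against bounded continuous path functionals $\Psi(S^{\circ,N})$ and then swaps $S^{\circ,N}$ for $S$. It reaches general bounded $\sigma(S)$-measurable tests by $L^1$-density of continuous functionals and $R\uparrow\infty$.

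You keep the windows fixed and replace the test by $W_\varepsilon=\Ee[W\mid\mathcal S^\varepsilon]$. You send $\varepsilon\downarrow0$ only after $N\to\infty$, using the $N$-uniform error $\|\phi\|_\infty\Ee|W-W_\varepsilon|$ and martingale convergence. This needs only that $S$ has no fixed-time jumps, not a quantitative bound on erased increments. It also treats every bounded $\sigma(S)$-measurable test at once, without truncating to $[0,R]$. The paper's construction, in exchange, gives an explicit pathwise approximation of $S$ that is independent of the local increments. That yields joint convergence with the path $S$ in $D([0,R];\R)$ directly, and the same device is reused for the grid coupling in Proposition~\ref{clk:same-law}.

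Two small points. First, tightness of the rescaled errors $Z_{j,N}:=N^{1/(2\alpha)}(L_{t_j}^N-L_{t_j})$ is stated explicitly in the paper; in your reduction it is only implicit. It follows from L\'evy's continuity theorem applied to the finite measures $B\mapsto\Ee[W\ind_B(\mathcal Q_N,(Z_{j,N})_j)]$, with $W\ge0$ and $\mathcal Q_N:=(G_{t_j}^N,H_{t_j}^N)_j$. Second, $\Ee[W_\varepsilon]=\Ee[W]$ holds exactly by the tower property, so the limit $\varepsilon\downarrow0$ is needed only to remove the Step~1 error.
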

\begin{proof}
Set $a_j:=A_{t_j}$ and $b_N:=N^{-1/4}$.
Since $A$ is strictly increasing, the $a_j$ are positive and distinct.
Fix $R>a_k$. For large $N$, the intervals
$I_{j,N}:=(a_j-b_N,a_j+b_N]$ are disjoint subsets of $[0,R]$.
Erase their increments from $S$:
\[
 S_u^{\circ,N}:=S_u-\sum_{j=1}^k
       \{S_{u\wedge(a_j+b_N)}-S_{u\wedge(a_j-b_N)}\},\qquad 0\le u\le R.
\]
Independent increments make $S^{\circ,N}$ independent of the erased
increment processes. Stable scaling gives
\[
 \|S-S^{\circ,N}\|_{\infty,[0,R]}
 \le\sum_{j=1}^k\sup_{0\le v\le2b_N}
            |S_{a_j-b_N+v}-S_{a_j-b_N}|
 =O_{\Pp}(b_N^{1/\alpha}).
\]
Write $\mathcal Q_N:=(G_{t_j}^N,H_{t_j}^N)_{j=1}^k$ and
$Z_{j,N}:=N^{1/(2\alpha)}(S_{T_{t_j}^N}-S_{a_j})$.
On
\[
 E_N:=\left\{\max_{1\le j\le k}|H_{t_j}^N|
                         \le N^{1/4}\right\},
 \qquad \Pp(E_N)\longrightarrow1,
\]
each increment defining $Z_{j,N}$ lies in $I_{j,N}$, since
\[
 |T_{t_j}^N-a_j|=N^{-1/2}|H_{t_j}^N|\le N^{-1/4}=b_N.
\]
Conditionally on the population, these increments are independent.
Symmetry covers either sign of $H_{t_j}^N$, and stable scaling gives
\begin{equation}\label{clk:clock-local-cf}
 \Ee\left[e^{iu_jZ_{j,N}}\Bigm|\mathcal Q_N\right]
 =\exp\{-|T_{t_j}^N-a_j|\,|N^{1/(2\alpha)}u_j|^\alpha\} =\exp\{-|H_{t_j}^N||u_j|^\alpha\}.
\end{equation}
Thus, for bounded
continuous $\varphi:(\mathsf T\times\R)^k\to\R$ and
$\Psi:D([0,R];\R)\to\R$,
\[
 \Ee\left[\ind_{E_N}\varphi(\mathcal Q_N)\Psi(S^{\circ,N})
                               e^{i\sum_ju_jZ_{j,N}}\right] \overset{\eqref{clk:clock-local-cf}}{=}
 \Ee\left[\ind_{E_N}\varphi(\mathcal Q_N)
             e^{-\sum_j|H_{t_j}^N||u_j|^\alpha}\right]
                         \Ee\left[\Psi(S^{\circ,N})\right].
\]
Replace $S^{\circ,N}$ by $S$ and apply Lemma~\ref{clk:joint-clt}.
Tightness of $(H_{t_j}^N)_j$ also gives tightness of $(Z_{j,N})_j$:
conditionally on $\mathcal Q_N$ and $E_N$, their scales are
$|H_{t_j}^N|^{1/\alpha}$. The displayed identity identifies every
subsequential joint limit with $S|_{[0,R]}$.
Bounded continuous functionals are dense in
$L^1(\Law(S|_{[0,R]}))$. Conditional expectations as $R\uparrow\infty$
then extend convergence to every bounded $\sigma(S)$-measurable test.
\end{proof}
\subsection{Two empirical centers}
For a finite signed measure $\zeta$ on $\R$, translation is defined by
$(\tau_\ell\zeta)(\phi):=\zeta(\phi(\cdot+\ell))$ for bounded continuous
$\phi:\R\to\R$. Set
\begin{equation}\label{clk:centers}
 \nu_t^N:=\sum_a p_{t,a}^N\delta_{y_a},\quad
 \nu_t:=\sum_a p_{t,a}\delta_{y_a},\quad
 \mu_t^N:=\tau_{L_t^N}\nu_t^N,\quad
 \mu_t^{[N]}:=\tau_{L_t^N}\nu_t,\quad
 \mu_t^L:=\tau_{L_t}\nu_t.
\end{equation}
The last three probability measures are the empirical law, its
finite-$N$ signal center, and its limiting signal center.
For $r\in\R$, let $H^r(\R)$ be the completion of the Schwartz space
$\mathcal S(\R)$ under
\[
 \|\phi\|_{H^r(\R)}^2:=(2\pi)^{-1}
     \int_\R(1+\xi^2)^r|\widehat\phi(\xi)|^2\dd\xi,\qquad
 \widehat\phi(\xi):=\int_\R e^{-ix\xi}\phi(x)\dd x.
\]
Translation extends to an isometry of every $H^r(\R)$, with
$\widehat{\tau_\ell\zeta}(\xi)=e^{-i\ell\xi}\widehat\zeta(\xi)$.
In particular $\delta_y\in H^{-s}(\R)$ for $s>1/2$.
We use $J_1$ on all path spaces below unless stated otherwise.
\begin{theorem}[Functional fluctuations at the finite-$N$ center]
\label{clk:finite-center}
For every $s>1/2$,
\begin{equation}\label{clk:finite-center-limit}
 \left(L^N,\sqrt N(\mu^N-\mu^{[N]})\right)
 \xRightarrow[N\to\infty]{}(L,\mathcal G^L),\qquad
 \mathcal G_t^L:=\tau_{L_t}\sum_aG_{t,a}\delta_{y_a},
\end{equation}
in $D([0,T];\R)\times D([0,T];H^{-s}(\R))$, stably relative to
$\sigma(S)$. Given $L$, the field is centered Gaussian. For
$\phi\in H^s(\R)$ define
\[
 a_\phi:\left|\begin{aligned}
 \R&\longrightarrow \R^m\\
 \ell&\longmapsto (\phi(y_a+\ell))_{a=1}^m.
 \end{aligned}\right.
\]
Then, for $\phi,\psi\in H^s(\R)$ and $r,t\in[0,T]$,
\[
 \operatorname{Cov}(\mathcal G_r^L(\phi),\mathcal G_t^L(\psi)\mid L)
 =a_\phi(L_r)^\top\operatorname{Cov}(G_r,G_t)a_\psi(L_t).
\]
\end{theorem}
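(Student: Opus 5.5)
The starting point is the exact identity
\[
 \sqrt N(\mu_t^N-\mu_t^{[N]})
 =\tau_{L_t^N}\sqrt N(\nu_t^N-\nu_t)
 =\tau_{L_t^N}\sum_aG_{t,a}^N\delta_{y_a},
\]
which holds because both measures are translated by the same finite-$N$ signal. The common center removes every clock-displacement term. Write $\Lambda:\R^m\to H^{-s}(\R)$ for the bounded linear map $g\mapsto\sum_ag_a\delta_{y_a}$; it is bounded since $\delta_y\in H^{-s}(\R)$ for $s>1/2$. The field is then $\Phi(L^N,G^N)$, where $\Phi(\ell,g)_t:=\tau_{\ell_t}\Lambda g_t$. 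The plan has three steps: prove joint convergence of the inputs $(L^N,G^N)$, show that $\Phi$ is continuous at the limit point in the vector $J_1$ sense, and identify the conditional covariance.

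\emph{Step 1: joint convergence of the inputs.} $L^N=S\circ T^N$ and $L=S\circ A$. Lemma~\ref{clk:joint-clt} gives $\|T^N-A\|_\infty=O_{\Pp}(N^{-1/2})$. Since $A$ is continuous and strictly increasing (because $\lambda_*>0$), I would build a time change $\lambda_N\in\Lambda_T$ with $\|\lambda_N-\mathrm{id}\|_\infty\to0$ and $A\circ\lambda_N$ close to $T^N$ in an appropriate sense. The difficulty is that $T^N$ is a step function, piecewise constant with jumps $1/N$. A more direct route is to note that $T^N$ is nondecreasing and converges uniformly to the continuous strictly increasing $A$, and then invoke continuity of composition $(x,\theta)\mapsto x\circ\theta$ from $D\times C_{\uparrow}$ to $D$ \citep[Theorem~13.2.2]{Whitt2002}. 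This continuity holds at $(S,A)$ because $A$ is strictly increasing, and it extends to nondecreasing $T^N$ converging uniformly to $A$. Since $S$ is fixed and independent of the population, this gives $S\circ T^N\to S\circ A$ in $J_1$ in probability. Combined with $G^N\Rightarrow G$ in $C$ (the limit is continuous) and independence from $S$, we get $(L^N,G^N)\Rightarrow(L,G)$ stably relative to $\sigma(S)$. For the stable version I would first test against bounded $\sigma(S)$-measurable $Y$: approximate $Y$ by continuous functionals of $S$, note that $G^N$ is independent of $S$, and use that $L^N-L$ is negligible in $J_1$ in probability.

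\emph{Step 2: continuity of $\Phi$.} Suppose $\ell_n\to\ell$ in $J_1$ via $\lambda_n$, and $g_n\to g$ uniformly with $g$ continuous. Then
\[
 \sup_t\|\Phi(\ell_n,g_n)_{\lambda_n(t)}-\Phi(\ell,g)_t\|
 \le C\|g_n-g\|_\infty+\sup_t\|\tau_{\ell_n(\lambda_n t)}\Lambda g_{\lambda_n t}-\tau_{\ell_t}\Lambda g_t\|.
\]
The last term is controlled by three facts: translation is an isometry, $g$ is uniformly continuous so $g\circ\lambda_n-g\to0$, and strong continuity of translation is uniform on the compact set $\{\Lambda g_t\}$, as in \eqref{rk:translation-compact}. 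The pair $(\ell_n,\Phi(\ell_n,g_n))$ uses the single time change $\lambda_n$, so the vector $J_1$ claim holds. The continuous mapping theorem for stable convergence then yields \eqref{clk:finite-center-limit}.

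\emph{Step 3: conditional law.} Since $G$ is independent of $S$ and $L=S\circ A$ with $A$ deterministic, the law of $G$ given $L$ is its unconditional centered Gaussian law. Hence $\mathcal G_r^L(\phi)=a_\phi(L_r)^\top G_r$ is, given $L$, a linear image of a Gaussian vector, and
\[
 \operatorname{Cov}(\mathcal G_r^L(\phi),\mathcal G_t^L(\psi)\mid L)
 =a_\phi(L_r)^\top\operatorname{Cov}(G_r,G_t)a_\psi(L_t).
\]
For $\phi\in H^s(\R)$ with $s>1/2$, $\phi$ is continuous, so $(\tau_\ell\delta_{y_a})(\phi)=\phi(y_a+\ell)$ is well defined.

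I expect Step 1 to be the main obstacle: the $J_1$ convergence of the random time change $S\circ T^N$, given that $T^N$ has jumps and $S$ has dense jumps. I would handle it through Whitt's composition continuity at strictly increasing continuous limits.
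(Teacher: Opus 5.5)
Your proof is correct and has the same architecture as the paper's. Both reduce to $(L^N,G^N)$ via the exact identity and show $d_{J_1}(L^N,L)\to0$ in probability. Both then get joint convergence with $G^N$ under a single time change, which is legitimate because $G$ is continuous. The field is handled by uniform strong continuity of translation on the compact set $\{\sum_aG_{t,a}\delta_{y_a}\}$. Stability comes from $L^1$ approximation of $\sigma(S)$-measurable tests, and the covariance from independence of $G$ and $S$.

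The one real difference is how you obtain $S\circ T^N\to S\circ A$ in $J_1$.

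\begin{itemize}
\item The paper proves it by hand. It splits $S$ into jumps larger than $\delta$ plus a compensated small-jump martingale, bounded by \eqref{clk:small-jump-bound}. Because $T^N$ moves in steps of $1/N$, each large jump $u_j$ is crossed in its own grid cell, at a time within $\lambda_*^{-1}(\|T^N-A\|_\infty+N^{-1})$ of $A^{-1}(u_j)$. The paper then builds explicit piecewise linear alignments $\theta_N$.
\item You invoke Whitt's continuity of composition at a continuous, strictly increasing inner limit.
\end{itemize}

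Your route is shorter and more general. It uses neither the unit jumps of $T^N$ nor the symmetry of $S$, only monotonicity of $T^N$ and uniform convergence to $A$. The paper's route is self-contained, produces the time changes explicitly, and is reused for the grid coupling in Proposition~\ref{clk:same-law}.

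Two details need tightening.
\begin{itemize}
\item Cite the composition result in the form that already allows nondecreasing c\`adl\`ag inner functions, as in \citet{Whitt2002}. Then $T^N$ is covered directly, and no ``extension'' from continuous clocks is needed.
\item On the finite horizon $[0,T]$ you need $S$ continuous at $A_T$. This holds almost surely because $A_T$ is deterministic and a L\'evy process has no fixed jump times. The paper notes $\Delta S_{A_T}=0$ for exactly this reason. If you use a statement formulated on $[0,\infty)$, also extend $A$ strictly increasingly beyond $T$.
\end{itemize}

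With these, continuity holds at $(S(\omega),A)$ for almost every $\omega$. A subsequence argument then converts $\|T^N-A\|_\infty\to0$ in probability into $J_1$ convergence in probability.
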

\begin{proof}
The exact identity
\begin{equation}\label{clk:finite-center-identity}
 \sqrt N(\mu_t^N-\mu_t^{[N]})
 \overset{\eqref{clk:centers}}{=}
 \tau_{L_t^N}\sum_a\sqrt N(p_{t,a}^N-p_{t,a})\delta_{y_a} =\tau_{L_t^N}\sum_aG_{t,a}^N\delta_{y_a}
\end{equation}
reduces the proof to joint convergence of $L^N$ and $G^N$.
Let $\nu_S$ denote the L\'evy measure of $S$.
On $[0,R]$, decompose $S=S^{>\delta}+M^{\le\delta}$ into jumps larger
than $\delta>0$ and its compensated small-jump martingale.
Symmetry removes the drift, and Doob's inequality gives
\begin{equation}\label{clk:small-jump-bound}
 \Ee\left[\sup_{u\le R}|M_u^{\le\delta}|^2\right]
 \le4R\int_{|z|\le\delta}z^2\nu_S(\dd z)
 =C_\alpha R\delta^{2-\alpha}.
\end{equation}
For fixed $\delta$, the finitely many jump times $u_j$ of $S^{>\delta}$
occupy distinct grid cells for large $N$. Every jump of $T^N$ is $1/N$
almost surely. For $0<u_j<A_T$, its crossing time satisfies
\[
 t_j^N:=\inf\{t:T_t^N\ge\lceil Nu_j\rceil/N\},\qquad |t_j^N-A^{-1}(u_j)|
 \le\lambda_*^{-1}(\|T^N-A\|_\infty+N^{-1}).
\]
These crossings exist with probability tending to one because
$\|T^N-A\|_\infty\to0$ in probability and $\dot A\ge\lambda_*$.
The large-jump set is locally finite and $\Delta S_{A_T}=0$ almost
surely, so the two clocks cross exactly the same large jumps before
$T$ with probability tending to one.
Align their physical times by increasing piecewise linear
homeomorphisms $\theta_N:[0,T]\to[0,T]$. They satisfy
$\|\theta_N-\operatorname{id}\|_\infty\to0$ in probability, and,
on this event with $T_T^N\le R$,
\[
 \|(S\circ T^N)\circ\theta_N-S\circ A\|_\infty
                 \le2\sup_{u\le R}|M_u^{\le\delta}|.
\]
Take $R>A_T+1$, then $N\to\infty$ and $\delta\downarrow0$ in
\eqref{clk:small-jump-bound}. This proves $L^N\to L$ in probability
in $J_1$.
Lemma~\ref{clk:joint-clt} and independence from $S$ give joint
convergence with $G^N\weak G$. The required translation modulus is
\begin{equation}\label{clk:translation-modulus}
 \begin{aligned}
 \|\tau_x\delta_y-\tau_z\delta_y\|_{H^{-s}}^2
 &=\frac1{2\pi}\int_\R(1+\xi^2)^{-s}
                         |e^{-ix\xi}-e^{-iz\xi}|^2\dd\xi\\
 &\le\frac1{2\pi}\int_\R(1+\xi^2)^{-s}
                         \min\{4,|x-z|^2\xi^2\}\dd\xi
       \longrightarrow0\qquad(x-z\to0).
 \end{aligned}
\end{equation}
On a representation realizing the joint convergence, the same time
changes give
\[
 \begin{aligned}
 \sup_{t\le T}\left\|
 \sqrt N(\mu^N-\mu^{[N]})_{\theta_N(t)}-\mathcal G_t^L
 \right\|_{H^{-s}}
 &\overset{\eqref{clk:finite-center-identity}}{\le}
 \|\delta_0\|_{H^{-s}}\sum_a\|G_a^N\circ\theta_N-G_a\|_\infty\\
 &\quad+\sum_a\|G_a\|_\infty
       \sup_{t\le T}\|\tau_{L^N_{\theta_N(t)}}\delta_{y_a}
                         -\tau_{L_t}\delta_{y_a}\|_{H^{-s}}\\
 &\overset{\eqref{clk:translation-modulus}}{\longrightarrow}0.
 \end{aligned}
\]
Here we used continuity of $G$. This proves
\eqref{clk:finite-center-limit}. Joint convergence with $S$, followed
by $L^1$ approximation of bounded $\sigma(S)$-measurable tests, gives
stability. Independence of $G$ and $S$ gives the conditional covariance.
\end{proof}
\begin{corollary}[Three regimes at the limiting center]\label{clk:regimes}
Fix the times in Proposition~\ref{clk:tangent} and $s>3/2$.
Let $Z_j:=|H_{t_j}|^{1/\alpha}\xi_j$ and
$\mathcal C_j:=-Z_j\partial_x\mu_{t_j}^L\in H^{-s}(\R)$, where
$(\partial_x\zeta)(\phi):=-\zeta(\phi')$ is the distributional derivative.
Jointly at these times, stably relative to $\sigma(S)$,
\begin{equation}\label{clk:regimes-limit}
 \begin{cases}
 \big(\sqrt N(\mu_{t_j}^N-\mu_{t_j}^L)\big)_j
       \xRightarrow[N\to\infty]{}(\mathcal G_{t_j}^L)_j,
                    &0<\alpha<1,\\
 \big(\sqrt N(\mu_{t_j}^N-\mu_{t_j}^L)\big)_j
       \xRightarrow[N\to\infty]{}(\mathcal G_{t_j}^L+\mathcal C_j)_j,
                    &\alpha=1,\\
 \big(N^{1/(2\alpha)}(\mu_{t_j}^N-\mu_{t_j}^L)\big)_j
       \xRightarrow[N\to\infty]{}(\mathcal C_j)_j,
                    &1<\alpha<2.
 \end{cases}
\end{equation}
For $\alpha>1$, the root-$N$ field is not tight at any single $t>0$.
\end{corollary}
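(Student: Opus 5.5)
We split the field at the limiting center into a sampling part and a signal part,
\[
 \mu_t^N-\mu_t^L=(\mu_t^N-\mu_t^{[N]})+(\tau_{L_t^N}-\tau_{L_t})\nu_t ,
\]
and treat each term at the fixed times $t_1<\cdots<t_k$. Theorem~\ref{clk:finite-center} shows that $\sqrt N(\mu^N_{t_j}-\mu^{[N]}_{t_j})$ converges to $\mathcal G^L_{t_j}$. Evaluation at the deterministic times $t_j$ is continuous at the limit, because $L$ almost surely has no jump at a fixed time and $\mathcal G^L$ is then continuous there; so the $J_1$ limit yields the finite-dimensional limit. For the signal part we apply the Taylor identity in $H^{-s}(\R)$, as in \eqref{rk:translation-taylor}, to the finite atomic measure $\nu_t$. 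Each Dirac satisfies
\[
 \|\tau_{a+z}\delta_y-\tau_a\delta_y+z\tau_a\partial_x\delta_y\|_{H^{-s}}\le \tfrac12 z^2\|\partial_x^2\delta_0\|_{H^{-s}},
\]
which is finite for $s>5/2$. For $3/2<s\le5/2$ we instead interpolate: the Fourier multiplier $e^{-iz\xi}-1+iz\xi$ is bounded by $\min\{2+|z\xi|,z^2\xi^2/2\}$. Integrating against $(1+\xi^2)^{-s}$ should give a remainder $o(|z|)$ in $H^{-s}$ whenever $s>3/2$, which is all we need. With $z=L^N_{t_j}-L_{t_j}$ and $a=L_{t_j}$, this gives
\[
 (\tau_{L^N_{t_j}}-\tau_{L_{t_j}})\nu_{t_j}=-(L^N_{t_j}-L_{t_j})\,\partial_x\mu^L_{t_j}+o_{\Pp}(|L^N_{t_j}-L_{t_j}|),
\]
since $\tau_a\partial_x\nu=\partial_x\tau_a\nu$.

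Proposition~\ref{clk:tangent} gives $N^{1/(2\alpha)}(L^N_{t_j}-L_{t_j})\Rightarrow Z_j$, jointly with $(G_{t_j},H_{t_j})$ and stably relative to $\sigma(S)$. In particular $L^N_{t_j}-L_{t_j}=O_{\Pp}(N^{-1/(2\alpha)})$, so the Taylor remainder is $o_{\Pp}(N^{-1/(2\alpha)})$.

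The key step is to combine the two pieces jointly. The sampling field at time $t_j$ equals $\tau_{L^N_{t_j}}\sum_aG^N_{t_j,a}\delta_{y_a}$. By the translation modulus \eqref{clk:translation-modulus}, and since $L^N_{t_j}\to L_{t_j}$ in probability, we may replace $\tau_{L^N_{t_j}}$ by $\tau_{L_{t_j}}$ at a cost of $o_{\Pp}(1)$. The same bound lets us replace $\partial_x\mu^L_{t_j}$ as written. Both normalized pieces are then continuous functions of
\[
 \bigl((G^N_{t_j},H^N_{t_j})_j,\ (N^{1/(2\alpha)}(L^N_{t_j}-L_{t_j}))_j,\ (L_{t_j})_j\bigr),
\]
and Proposition~\ref{clk:tangent} provides the joint stable limit of this vector. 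Here $L_{t_j}=S_{A_{t_j}}$ is $\sigma(S)$-measurable, so stable convergence lets us append it.

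The three cases now follow by comparing scales. For $\alpha<1$ we have $N^{1/2}N^{-1/(2\alpha)}\to0$, so only the sampling part survives. For $\alpha=1$ the two rates coincide, and the continuous mapping theorem gives the sum $\mathcal G^L_{t_j}+\mathcal C_j$. For $\alpha>1$ we multiply by $N^{1/(2\alpha)}$ instead; the sampling part is $O_{\Pp}(N^{1/(2\alpha)-1/2})\to0$, leaving $\mathcal C_j$.

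For non-tightness when $\alpha>1$, we write $\sqrt N(\mu_t^N-\mu^L_t)=N^{1/2-1/(2\alpha)}(\mathcal C+o_{\Pp}(1))$ and argue as in Corollary~\ref{rk:sharpness}. This requires $\|\mathcal C_t\|_{H^{-s}}>0$ almost surely. First, $\partial_x\mu^L_t\ne0$, because $\nu_t$ is a nonzero finite combination of Diracs: the $y_a$ are distinct and $p_t\in\Delta_m$. Second, $Z=|H_t|^{1/\alpha}\xi$ is almost surely nonzero, since $\xi$ has a density and $v_t>0$ by \eqref{clk:positive-clock-variance}, so the Gaussian $H_t$ is nonzero almost surely.

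I expect the main obstacle to be the Taylor remainder at low regularity $s\in(3/2,5/2]$, where $\partial_x^2\delta$ does not belong to $H^{-s}$. There I would rely on the explicit Fourier-multiplier estimate above, with dominated convergence giving the $o(|z|)$ bound.
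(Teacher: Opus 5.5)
Your proposal is correct and follows the paper's proof. Both use the same split at the two centers, a Fourier-multiplier Taylor bound giving an $o(|z|)$ remainder in $H^{-s}(\R)$ for $s>3/2$, the joint stable limit from Proposition~\ref{clk:tangent}, a comparison of the scales $N^{-1/2}$ and $N^{-1/(2\alpha)}$, and nondegeneracy of $\mathcal C_j$ from $v_t>0$ and $\partial_x\nu_t\neq0$. The differences are minor: the paper's single domination $|e^{-iz\xi}-1+iz\xi|\le 2|z\xi|$ covers all $s>3/2$, so your separate $s>5/2$ case is unnecessary, and your replacement of $\tau_{L^N_{t_j}}$ by $\tau_{L_{t_j}}$ spells out the joint-convergence step that the paper leaves implicit.
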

\begin{proof}
The two centers give the exact decomposition
\begin{equation}\label{clk:empirical-decomposition}
 \mu_t^N-\mu_t^L
 \overset{\eqref{clk:centers}}{=}
 \tau_{L_t^N}(\nu_t^N-\nu_t)
                   +(\tau_{L_t^N}-\tau_{L_t})\nu_t.
\end{equation}
Since $|\widehat\nu_t(\xi)|\le1$, Fourier transformation gives
\begin{equation}\label{clk:translation-taylor}
 \| (\tau_{\ell+z}-\tau_\ell)\nu_t
                        +z\partial_x\tau_\ell\nu_t\|_{H^{-s}}^2 =\frac1{2\pi}\int_\R(1+\xi^2)^{-s}
       |e^{-iz\xi}-1+iz\xi|^2|\widehat\nu_t(\xi)|^2\dd\xi=o(z^2).
\end{equation}
Indeed, $(e^{-iz\xi}-1+iz\xi)/z\to0$ and its modulus is bounded by
$2|\xi|$, with
$\int_\R(1+\xi^2)^{-s}\xi^2\dd\xi<\infty$.
The remainder is uniform in $\ell$ and $t$. At each fixed $t=t_j$,
\[
 \begin{aligned}
 \mu_t^N-\mu_t^L
 &\overset{\eqref{clk:empirical-decomposition}}{=}
 \tau_{L_t^N}(\nu_t^N-\nu_t)+(\tau_{L_t^N}-\tau_{L_t})\nu_t\\
 &\overset{\eqref{clk:translation-taylor}}{=}
 N^{-1/2}\tau_{L_t^N}\sum_aG_{t,a}^N\delta_{y_a}
 -(L_t^N-L_t)\partial_x\mu_t^L+o_{\Pp}(N^{-1/(2\alpha)}).
 \end{aligned}
\]
By Proposition~\ref{clk:tangent}, the two leading terms have scales
$N^{-1/2}$ and $N^{-1/(2\alpha)}$. Dividing by the larger scale
gives \eqref{clk:regimes-limit}.
For $t>0$, \eqref{clk:positive-clock-variance} gives
$v_t>0$, hence $\Pp(H_t=0)=0$.
Moreover, $\widehat\nu_t(0)=1$ and continuity of $\widehat\nu_t$ imply
$\partial_x\nu_t\ne0$. Since stable variables have no atom at zero,
$\|\mathcal C_j\|_{H^{-s}}>0$ almost surely. For $\alpha>1$, multiplying
the nonzero clock limit by
$N^{1/2-1/(2\alpha)}\to\infty$ excludes root-$N$ tightness.
\end{proof}
For real paths, $M_1$ is the Skorokhod topology defined by uniform
comparison of continuous order-preserving parametrizations of the
completed graph, which includes the vertical segment at every jump
\citep{Whitt2002}.
\begin{proposition}[No additive path normalization at the limiting center]
\label{clk:no-path}
For every $T>0$ and $s>1/2$, there are almost surely positive random
constants $c_L,c_\mu$ such that
\[
 \liminf_{N\to\infty}\sup_{t\le T}|L_t^N-L_t|\ge c_L,\qquad
 \liminf_{N\to\infty}\sup_{t\le T}
                  \|\mu_t^N-\mu_t^L\|_{H^{-s}}\ge c_\mu.
\]
Consequently, for every deterministic $a_N\to\infty$, the fields
$\mathcal D^{N,a}:=a_N(L^N-L)$ and
$\mathcal E^{N,a}:=a_N(\mu^N-\mu^L)$ satisfy almost surely
\begin{equation}\label{clk:no-path-bound}
 \sup_{t\le T}|\mathcal D_t^{N,a}|\longrightarrow\infty,\qquad
 \sup_{t\le T}\|\mathcal E_t^{N,a}\|_{H^{-s}}\longrightarrow\infty.
\end{equation}
The family $(\mathcal D^{N,a})_N$ is tight in neither $J_1$ nor $M_1$,
and $(\mathcal E^{N,a})_N$ is not $J_1$-tight. In particular, the first
line of \eqref{clk:regimes-limit} has no functional extension at this
center.
\end{proposition}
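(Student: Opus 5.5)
The plan is to produce a single macroscopic discrepancy from one large jump of $S$: the two clocks $T^N$ and $A$ cross the stable jump time at physical times that differ by order $N^{-1/2}$. Between those two crossings, one of $L^N,L$ has absorbed the jump and the other has not. So $\sup_t|L^N_t-L_t|$ stays of the order of the jump size, whatever $N$ is.

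\emph{Step 1 (almost sure clock convergence).} First I upgrade $\|T^N-A\|_\infty\to0$ from probability to almost sure convergence. In the Kurtz representation \eqref{clk:chain}, the functional strong law $\sup_{v\le K}|N^{-1}P_{ab}(Nv)-v|\to0$ a.s. holds for each unit Poisson process. Lipschitz rates and Gronwall then give $\|p^N-p\|_\infty\to0$ and $\|T^N-A\|_\infty\to0$ a.s., provided $p_0^N\to p_0$ a.s.

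This is the step I expect to be the main obstacle. The hypothesis only gives $G_0^N\Rightarrow G_0$, hence $p_0^N\to p_0$ in probability. I would first check whether $p_0^N$ may be realized so that $|p_0^N-p_0|=O(N^{-1/2})$ pathwise (for example, deterministic $p_0^N$ or a Skorokhod realization). If not, the whole argument below still gives the bounds in probability, and a.s.\ along subsequences.

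\emph{Step 2 (the lower bound for $L$).} Pick a jump time $u\in(0,A_T)$ of $S$, say the largest jump in that window, with size $J:=|\Delta S_u|>0$, and set $t_*:=A^{-1}(u)\in(0,T)$. Almost surely $T^N$ does not jump exactly at $t_*$, since $A$ is deterministic and the jump times of the chain have diffuse laws, and $u\notin N^{-1}\Z$ for every $N$. I then split into two cases.
\begin{itemize}
\item If $T^N_{t_*-}\ge u$, take $t\uparrow t_*$. Then $L_t\to S_{u-}$, while $S_{T^N_t}$ is evaluated at a point of $[u,u+\varepsilon]$.
\item If $T^N_{t_*-}<u$, then also $T^N_{t_*}<u$. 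Take $t\downarrow t_*$: then $L_t\to S_u$, while $T^N_t=T^N_{t_*}\in[u-\varepsilon,u)$.
\end{itemize}
Here $\varepsilon\to0$ with $N$ by Step 1. In either case
\[
 \sup_{t\le T}|L^N_t-L_t|\ge J-\sup_{v\in[u-\varepsilon,u)}|S_v-S_{u-}|-\sup_{v\in[u,u+\varepsilon]}|S_v-S_u|.
\]
Both oscillation terms tend to zero by càdlàg regularity, so $\liminf_N\ge J/2=:c_L$ almost surely.

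\emph{Step 3 (the lower bound for the field).} At the same times, use \eqref{clk:empirical-decomposition}. The first term obeys $\|\tau_{L^N_t}(\nu^N_t-\nu_t)\|_{H^{-s}}\le\|\delta_0\|_{H^{-s}}|p^N_t-p_t|_1\to0$ uniformly. For the second term, write
\[
 \Phi_t(h):=\|(\tau_h-\tau_0)\nu_t\|_{H^{-s}}^2=\frac1{2\pi}\int(1+\xi^2)^{-s}\,|e^{-ih\xi}-1|^2\,|\widehat\nu_t(\xi)|^2\dd\xi,
\]
which is jointly continuous in $(h,t)$. It is positive for $h\ne0$, because $\widehat\nu_t$ is a trigonometric polynomial with $\widehat\nu_t(0)=1$. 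By Riemann--Lebesgue,
\[
 \Phi_t(h)\to2\int(1+\xi^2)^{-s}|\widehat\nu_t|^2/(2\pi)>0\qquad(|h|\to\infty),
\]
uniformly in $t$. Hence $\inf\{\Phi_t(h):t\le T,\ |h|\ge c_L\}>0$. Translation isometry and $|L^N_t-L_t|\ge c_L$ at the selected times then give $c_\mu$.

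\emph{Step 4 (consequences).} Multiplying by $a_N\to\infty$ gives \eqref{clk:no-path-bound}. Tightness in $J_1$ or $M_1$ implies tightness of $\sup_{t\le T}|x_t|$, because the uniform norm is bounded on relatively compact sets of both topologies. A diverging sup therefore excludes tightness, for $\mathcal D^{N,a}$ in $J_1$ and $M_1$ and for $\mathcal E^{N,a}$ in $J_1$. Taking $a_N=\sqrt N$ shows that the first line of \eqref{clk:regimes-limit} has no functional extension at this center.
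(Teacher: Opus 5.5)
There is a genuine gap in the almost sure claims. Steps 2 and 3 only close if $\|T^N-A\|_\infty\to0$ and $\|p^N-p\|_\infty\to0$ almost surely, and the hypotheses do not provide this. Only $G_0^N\Rightarrow G_0$ is assumed, and the joint realization of $(p_0^N)_N$ across $N$ is unspecified.

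For example, take $G_0=0$ and independent lattice points $p_0^N$ that lie at a fixed distance from $p_0$ with probability $1/\log N$. These satisfy the hypothesis, and the second Borel--Cantelli lemma makes such deviations occur infinitely often. A Skorokhod realization changes the joint law across $N$, so it cannot support an almost sure statement about the given processes. Your fallback therefore gives the lower bounds only in probability or along subsequences. That still excludes tightness, but it proves neither the stated almost sure $\liminf$ bounds nor \eqref{clk:no-path-bound}. Step 3 also depends on the failing step, through the uniform vanishing of $|p_t^N-p_t|_1$.

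The missing observation is that no clock convergence is needed. You already note that, almost surely and simultaneously for all $N$, no chain event occurs at $t_*$. Since $T^N$ is piecewise constant, $L^N=S\circ T^N$ and $\nu^N$ are then constant near $t_*$, while $\nu$ is continuous. Your case analysis in Step 2 already uses this constancy; comparing $t_*-$ with $t_*$ directly suffices. With $z:=\Delta L_{t_*}$, for every $N$,
\[
 \Delta(L^N-L)_{t_*}=-z,\qquad
 \Delta(\mu^N-\mu^L)_{t_*}=-\tau_{L_{t_*-}}\bigl(\tau_z\nu_{t_*}-\nu_{t_*}\bigr).
\]
The bound $\|\Delta x_t\|\le2\|x\|_\infty$ then gives
\[
 \sup_{t\le T}|L_t^N-L_t|\ge|z|/2,\qquad
 \sup_{t\le T}\|\mu_t^N-\mu_t^L\|_{H^{-s}}\ge\tfrac12\|\tau_z\nu_{t_*}-\nu_{t_*}\|_{H^{-s}},
\]
for every $N$, not just asymptotically. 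The last norm is positive because a finitely supported probability is not invariant under a nonzero translation.

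This is the paper's argument. It replaces your Steps 1--3, including the uniform Riemann--Lebesgue bound over $|h|\ge c_L$, by this $N$-independent jump identity. Your Step 4 is fine.

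A minor point: $\widehat\nu_t$ is an exponential sum with arbitrary real frequencies $y_a$, not a trigonometric polynomial. Positivity of $\Phi_t(h)$ for $h\ne0$ still follows from $\widehat\nu_t(0)=1$ and continuity.
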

\begin{proof}
The chain event times, mapped by $A$, form a countable random set
independent of $S$. A L\'evy process has no fixed-time jumps, so almost
surely no jump of $L$ is a chain event, simultaneously for all $N$.
At every such time,
\begin{equation}\label{clk:jump-mismatch}
 \Delta(L^N-L)_t=-\Delta L_t,\qquad \Delta(\mu^N-\mu^L)_t
 \overset{\eqref{clk:centers}}{=}
       -\tau_{L_{t-}}(\tau_{\Delta L_t}\nu_t-\nu_t).
\end{equation}
The stable L\'evy measure has infinite mass. Hence $S$ almost surely
has a nonzero jump on $(0,A_T)$. Choose its physical time $t$ and put
$z:=\Delta L_t\ne0$. A finitely supported probability on $\R$ cannot
be invariant under a nonzero translation: invariance would imply
that its finite support contains $y+nz$ for every integer $n$.
Therefore
\[
 c_L:=|z|/2>0,\qquad
 c_\mu:=\tfrac12\|\tau_z\nu_t-\nu_t\|_{H^{-s}}>0.
\]
For any c\`adl\`ag normed-space path $x$,
\begin{equation}\label{clk:path-jump-bound}
 \|\Delta x_t\|\le\|x_t\|+\|x_{t-}\|\le2\|x\|_\infty.
\end{equation}
Thus, for every $N$,
\[
 \begin{aligned}
 &\|L^N-L\|_\infty
 \overset{\eqref{clk:path-jump-bound}}{\ge}
 \tfrac12|\Delta(L^N-L)_t|
 \overset{\eqref{clk:jump-mismatch}}{=}|z|/2=c_L,\\
 &\|\mu^N-\mu^L\|_{\infty,H^{-s}}
 \overset{\eqref{clk:path-jump-bound}}{\ge}
 \tfrac12\|\Delta(\mu^N-\mu^L)_t\|_{H^{-s}} \overset{\eqref{clk:jump-mismatch}}{=}
 \tfrac12\|\tau_z\nu_t-\nu_t\|_{H^{-s}}=c_\mu.
 \end{aligned}
\]
Multiplication by $a_N\to\infty$ proves \eqref{clk:no-path-bound}.
Compact sets in each of these topologies have uniformly bounded ranges,
so tightness fails.
\end{proof}
\subsection{Dependence at the Cauchy boundary}
\begin{corollary}[A Gaussian--Cauchy mixture]\label{clk:cauchy}
Let $\alpha=1$, $t>0$, and $\phi\in C_b^1(\R)$. Define
\[
 \begin{gathered}
 a:\left|\begin{aligned}
 \R&\longrightarrow \R^m\\
 \ell&\longmapsto (\phi(y_a+\ell))_{a=1}^m,
 \end{aligned}\right.
 \qquad
 d:\left|\begin{aligned}
 \R&\longrightarrow \R\\
 \ell&\longmapsto \displaystyle\sum_a p_{t,a}\phi'(y_a+\ell),
 \end{aligned}\right.\\[1ex]
 \kappa(\ell):=a(\ell)^\top c_t,\qquad
 r(\ell):=a(\ell)^\top P_ta(\ell)-\kappa(\ell)^2/v_t\ge0.
 \end{gathered}
\]
Then, stably relative to $\sigma(S)$,
\[
 \sqrt N\bigl(\mu_t^N(\phi)-\mu_t^L(\phi)\bigr)
       \xRightarrow[N\to\infty]{}Y:=U+d(L_t)Z,
 \qquad U:=a(L_t)^\top G_t,\qquad Z:=|H_t|\xi,
\]
where $\xi$ is a standard Cauchy variable independent of $(G,H,S)$.
Conditionally on $L$, $U$ and $Z$ are independent if and only if
$\kappa(L_t)=0$ almost surely, and
\begin{equation}\label{clk:cauchy-cf}
 \Ee\left[e^{iuY}\Bigm| L\right]
 =e^{-u^2r(L_t)/2}
   \Ee_{h\sim\mathcal N(0,v_t)}\left[
       \exp\left\{\frac{iu\kappa(L_t)}{v_t}h-|u d(L_t)|\,|h|\right\}\right].
\end{equation}
At time $t$, the Gaussian empirical field and $Z$ are conditionally
independent if and only if $c_t=0$. These criteria concern $Z$ itself.
When $d(L_t)=0$, the observed clock correction vanishes.
\end{corollary}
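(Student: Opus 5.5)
Specialize Corollary~\ref{clk:regimes} with $\alpha=1$, a single time $k=1$, and pair the limiting field with $\phi$. The field-valued statement lives in $H^{-s}(\R)$ with $s>3/2$, and $\phi\in C_b^1(\R)$ need not lie in $H^s(\R)$. I would therefore not pass through the Sobolev statement. Instead I would reuse its scalar decomposition \eqref{clk:empirical-decomposition} directly:
\[
 \sqrt N\bigl(\mu_t^N(\phi)-\mu_t^L(\phi)\bigr)
 =a(L_t^N)^\top G_t^N+\sqrt N\bigl(\nu_t(\phi(\cdot+L_t^N))-\nu_t(\phi(\cdot+L_t))\bigr).
\]
For the second term, the mean value theorem gives $\sqrt N(L_t^N-L_t)\,\nu_t(\phi'(\cdot+\theta_N))$ with $\theta_N$ between $L_t$ and $L_t^N$. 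Proposition~\ref{clk:tangent} gives $\sqrt N(L_t^N-L_t)=O_{\Pp}(1)$, hence $L_t^N\to L_t$ in probability.

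The difficulty is that $\phi'$ is only bounded and continuous, not uniformly continuous. Since $\nu_t$ is finitely supported, $\nu_t(\phi'(\cdot+\ell))=d(\ell)$ is continuous in $\ell$, so $d(\theta_N)\to d(L_t)$ in probability. Likewise $a(L_t^N)\to a(L_t)$ by continuity of $\phi$. Slutsky's lemma and the joint stable convergence of $(G_t^N,H_t^N,N^{1/2}(L_t^N-L_t))$ relative to $\sigma(S)$ from Proposition~\ref{clk:tangent} then yield
\[
 a(L_t)^\top G_t+d(L_t)\,|H_t|\xi .
\]
Because $L_t$ is $\sigma(S)$-measurable, the continuous-mapping step is compatible with stable convergence.

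For the conditional characteristic function, condition first on $(G,H,S)$. By Proposition~\ref{clk:tangent} with $\alpha=1$, $\Ee[e^{iud(L_t)|H_t|\xi}\mid G,H,S]=e^{-|ud(L_t)||H_t|}$. Since $(G,H)$ is independent of $S$, conditioning on $L$ freezes $a(L_t)$ and $d(L_t)$ while leaving $(G_t,H_t)$ Gaussian with covariances $P_t,c_t,v_t$ from \eqref{clk:covariance}. We have $v_t>0$ by \eqref{clk:positive-clock-variance}. Regress $U=a^\top G_t$ on $H_t=h$: $U=(\kappa/v_t)h+W$ with $W\sim\mathcal N(0,r)$ independent of $H_t$. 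This also shows $r\ge0$. Integrating out $W$ and then $h\sim\mathcal N(0,v_t)$ gives \eqref{clk:cauchy-cf}.

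For the independence criteria, $U$ and $Z=|H_t|\xi$ are conditionally independent given $L$ iff $U$ is independent of $|H_t|$. The Gaussian pair $(U,H_t)$ with $U=(\kappa/v_t)H_t+W$ makes $U$ independent of $|H_t|$ iff $\kappa=0$. Indeed, if $\kappa\neq0$, then $\Ee[U^2\mid |H_t|]=\kappa^2H_t^2/v_t^2+r$ is nonconstant. Conversely, independence of $U$ and $|H_t|$ together with $\xi\perp(G,H)$ gives independence of $U$ and $Z$. The field-level statement is the same argument applied to the whole vector $G_t$: $G_t$ is independent of $|H_t|$ iff $\operatorname{Cov}(G_t,H_t)=c_t=0$.

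The main obstacle I expect is the careful bookkeeping that "iff $\kappa(L_t)=0$ almost surely" is stated given $L$, with random $L_t$. I would handle it pointwise in $\ell=L_t$ and observe that the conditional independence fails on the event $\{\kappa(L_t)\neq0\}$. The case $d(L_t)=0$ is immediate from the formula.
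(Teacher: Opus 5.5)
Your argument for the stable limit and for \eqref{clk:cauchy-cf} is correct and follows the paper's route. You expand the translated center as a scalar, apply Proposition~\ref{clk:tangent} with Slutsky, and regress $U$ on $H_t$. Applying the mean value theorem to the scalar $C^1$ function $\ell\mapsto\sum_a p_{t,a}\phi(y_a+\ell)$ legitimately gives a single intermediate point. Your conditional second moment $\Ee[U^2\mid |H_t|,L]$ plays the role of the paper's identity $\operatorname{Cov}(U^2,H_t^2\mid L)=2\kappa(L_t)^2$.

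There is one unproved implication, and it is the one the statement signals with \emph{these criteria concern $Z$ itself}. You assert that $U\perp Z\mid L$ holds if and only if $U\perp|H_t|\mid L$, but you justify only the easy direction: $U\perp|H_t|$ together with $\xi\perp(G,H,S)$ gives $U\perp Z$. To show that independence forces $\kappa(L_t)=0$, you need the converse, $U\perp Z\Rightarrow U\perp|H_t|$. This is not a formality. $Z=|H_t|\xi$ is a randomized function of $|H_t|$, and independence of $U$ from such a noisy observation passes to $|H_t|$ only because scale mixtures of the Cauchy law are identifiable.

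The missing step is as follows. Since $\xi\perp(G,H,S)$, one has $\Ee[e^{iuZ}\mid U,L]=\Ee[e^{-|u||H_t|}\mid U,L]$. If $U\perp Z\mid L$, this Laplace transform of the conditional law of $|H_t|$ given $(U,L)$ does not depend on $U$ for any $u$. Uniqueness of Laplace transforms on $[0,\infty)$ then gives $U\perp|H_t|\mid L$, and only now does your second-moment computation yield $\kappa(L_t)=0$.

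The same step is missing in the field-level criterion. There you also need that, given $L$, the field $\sum_aG_{t,a}\delta_{y_a+L_t}$ determines $G_t$, which holds because the $y_a$ are distinct. Equivalently, test functions can take arbitrary values at the points $y_a+L_t$, so the scalar criterion for all tests gives $a^\top c_t=0$ for every $a\in\R^m$, that is, $c_t=0$.
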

\begin{proof}
For each $a$, the fundamental theorem of calculus gives
\[
 \phi(y_a+\ell+z)-\phi(y_a+\ell)
 =z\phi'(y_a+\ell)
  +z\int_0^1\{\phi'(y_a+\ell+\theta z)-\phi'(y_a+\ell)\}\dd\theta.
\]
The last term is $o(|z|)$, uniformly for $\ell$ in compact sets.
Apply Proposition~\ref{clk:tangent}, using tightness of $L_t$ and
$\sqrt N(L_t^N-L_t)$, to obtain the scalar limit.
Conditional Gaussian regression gives
\begin{equation}\label{clk:gaussian-regression}
 U=\frac{\kappa(L_t)}{v_t}H_t+R,\qquad
 R\perp H_t\mid L,\qquad R\mid L\sim\mathcal N(0,r(L_t)).
\end{equation}
The residual $R$ is also conditionally independent of $\xi$.
Integrating over $R$, $\xi$, and $H_t$ in turn gives
\[
 \begin{aligned}
 \Ee\left[e^{iuY}\Bigm| L,H_t\right]
 &\overset{\eqref{clk:gaussian-regression}}{=}
 e^{iu\kappa(L_t)H_t/v_t}\Ee\left[e^{iuR}\Bigm| L\right] \times\Ee\left[e^{iu d(L_t)|H_t|\xi}\Bigm| L,H_t\right]\\
 &=\exp\left\{-\tfrac12u^2r(L_t)
       +\frac{iu\kappa(L_t)}{v_t}H_t-|u d(L_t)|\,|H_t|\right\}.
 \end{aligned}
\]
Since $H_t\mid L\sim\mathcal N(0,v_t)$, averaging proves
\eqref{clk:cauchy-cf}. If $\kappa(L_t)=0$, Gaussian independence gives
$U\perp(H_t,\xi)\mid L$, hence $U\perp Z\mid L$.
Conversely,
\[
 \Ee\left[e^{iuZ}\Bigm| U,L\right]=\Ee\left[e^{-|u||H_t|}\Bigm| U,L\right].
\]
If $U\perp Z\mid L$, uniqueness of Laplace transforms implies
$U\perp|H_t|\mid L$. The Gaussian fourth-moment identity then gives
\[
 0=\operatorname{Cov}(U^2,H_t^2\mid L)=2\kappa(L_t)^2.
\]
Finally, the map $g\mapsto\sum_ag_a\delta_{y_a+L_t}$ is injective
because the locations are distinct. Equivalently, smooth test functions
can take any prescribed values at these locations. The scalar criterion
for all such tests is $a^\top c_t=0$ for every $a\in\R^m$, namely $c_t=0$.
\end{proof}
For $m=2$, $y_1=0$ and $y_2=1$, allow only $0\to1$ transitions at rate $\beta>0$,
and start all particles in type $0$. Put $\pi_t:=1-e^{-\beta t}$.
The fraction at $1$ equals the normalized transition count, so
\[
 T_t^N=p_{t,2}^N,\qquad A_t=\pi_t,\qquad G_t=(-H_t,H_t),
 \qquad v_t=\pi_t(1-\pi_t),
\]
and
\[
 \kappa(\ell)=\pi_t(1-\pi_t)\{\phi(1+\ell)-\phi(\ell)\}.
\]
Thus the critical clock correction can depend on the sampling field.
\subsection{The same microscopic law under two couplings}
We now construct two couplings of one torus translation model.
Write $\Gamma(a):=\int_0^\infty t^{a-1}e^{-t}\dd t$ for $a>0$.
\begin{lemma}[A quantitative stable tail]\label{clk:stable-tail}
If $\Ee\left[e^{iuU}\right]=e^{-|u|^\alpha}$ for $u\in\R$ and $0<\alpha<1$, then
for every $x>0$,
\begin{equation}\label{clk:stable-tail-bound}
 \left|\Pp(|U|>x)-c_\alpha x^{-\alpha}\right|
 \le\frac{\Gamma(2\alpha)}\pi x^{-2\alpha},\qquad
 c_\alpha:=\frac{2\Gamma(\alpha)\sin(\pi\alpha/2)}\pi.
\end{equation}
\end{lemma}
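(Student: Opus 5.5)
The approach is Fourier inversion for the tail, followed by an exact evaluation of the leading term and a crude bound on the remainder. Since $U$ is symmetric with integrable characteristic function $e^{-|u|^\alpha}$, it has a continuous density
\[
 p(y)=\frac1\pi\int_0^\infty\cos(uy)e^{-u^\alpha}\dd u .
\]
I would not work with $p$ directly. Instead I would use the standard tail formula for symmetric laws, valid for every $x>0$:
\[
 \Pp(|U|>x)=\frac2\pi\int_0^\infty\frac{\sin(ux)}{u}\bigl(1-e^{-u^\alpha}\bigr)\dd u .
\]
This comes from $\Pp(|U|\le x)=\frac2\pi\int_0^\infty\frac{\sin(ux)}u e^{-u^\alpha}\dd u$ (Lévy inversion) together with $\frac2\pi\int_0^\infty\frac{\sin(ux)}u\dd u=1$. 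The integral converges absolutely near $0$ since $1-e^{-u^\alpha}\le u^\alpha$, and conditionally at infinity as a Dirichlet integral.

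Next I would write $1-e^{-u^\alpha}=u^\alpha-w(u)$ with
\[
 w(u):=u^\alpha-1+e^{-u^\alpha}\in\bigl[0,\min(u^{2\alpha}/2,\,u^\alpha)\bigr].
\]
The leading term is $\frac2\pi\int_0^\infty\sin(ux)u^{\alpha-1}\dd u$. It is a conditionally convergent integral equal to $\frac2\pi\Gamma(\alpha)\sin(\pi\alpha/2)x^{-\alpha}$, by the classical Mellin formula $\int_0^\infty u^{\alpha-1}\sin u\dd u=\Gamma(\alpha)\sin(\pi\alpha/2)$ for $0<\alpha<1$. This gives exactly $c_\alpha x^{-\alpha}$.

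It then remains to show
\[
 \Bigl|\frac2\pi\int_0^\infty\frac{\sin(ux)}u w(u)\dd u\Bigr|\le\frac{\Gamma(2\alpha)}\pi x^{-2\alpha}.
\]
The bound $|\sin|\le1$ is useless because $w(u)/u\sim u^{\alpha-1}$ is not integrable at infinity. So the obstacle is getting the right constant, and I would integrate by parts once more to gain decay. A cleaner route is the substitution $v=u^\alpha$ and the Frullani-type identity
\[
 w(u)=\int_0^{u^\alpha}(1-e^{-r})\dd r .
\]
With it, the remainder becomes
\[
 \frac2\pi\int_0^\infty(1-e^{-r})\int_{r^{1/\alpha}}^\infty\frac{\sin(ux)}u\dd u\dd r .
\]
This is still awkward, so what I would actually try first is the series route. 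For $0<\alpha<1$ the symmetric stable tail has the convergent Pólya/Bergström expansion
\[
 \Pp(|U|>x)=\frac2\pi\sum_{k\ge1}\frac{(-1)^{k+1}}{k!}\Gamma(k\alpha)\sin(k\pi\alpha/2)x^{-k\alpha}.
\]
It is obtained by expanding $1-e^{-u^\alpha}=\sum_{k\ge1}(-1)^{k+1}u^{k\alpha}/k!$ and applying the Mellin formula termwise, after rotating the contour $u\mapsto ue^{i\pi/2}$ to justify the interchange. Rotating makes everything absolutely convergent, since on the ray $\arg u=\pi/2$ the factor $e^{-ux}$ decays exponentially. The $k=1$ term is $c_\alpha x^{-\alpha}$.

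The remaining difficulty is bounding the tail series $k\ge2$ by its first term's modulus. The $k=2$ term has modulus at most $\frac2\pi\frac{\Gamma(2\alpha)}{2}x^{-2\alpha}=\frac{\Gamma(2\alpha)}\pi x^{-2\alpha}$, which is the claimed constant. However, the series need not alternate in magnitude for small $x$, so a termwise argument is not enough.

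Therefore I would instead keep the rotated contour representation and bound its remainder directly. After rotation,
\[
 \Pp(|U|>x)=\frac2\pi\operatorname{Im}\int_0^\infty\frac{e^{-tx}}t\bigl(1-\exp(-t^\alpha e^{i\pi\alpha/2})\bigr)\dd t .
\]
Then apply the Taylor bound $|1-e^{-\zeta}-\zeta|\le|\zeta|^2/2$, which is valid because $\operatorname{Re}\zeta=t^\alpha\cos(\pi\alpha/2)\ge0$. This gives a remainder of modulus at most
\[
 \frac2\pi\int_0^\infty e^{-tx}\frac{t^{2\alpha}}{2t}\dd t=\frac{\Gamma(2\alpha)}\pi x^{-2\alpha},
\]
exactly as required. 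The main step to check carefully is the contour rotation: the arc contributions must vanish at $0$ and $\infty$, and this uses $\alpha<1$ so that $\exp(-u^\alpha)$ stays bounded in the closed quadrant.
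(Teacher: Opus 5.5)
Your final argument is correct and is essentially the paper's proof: rotate onto the positive imaginary axis, apply $|e^{-z}-1+z|\le|z|^2/2$ for $\operatorname{Re}z\ge0$ with $z=e^{i\pi\alpha/2}t^\alpha$, and evaluate the resulting Gamma integrals. The only difference is that you rotate the L\'evy-inversion tail integral directly, so the large arc needs Jordan's lemma because the integrand there is only $O(1/R)$. The paper instead rotates the density integral, where $e^{-u^\alpha}$ decays on the arc, bounds $f_U(x)$ pointwise by the leading term plus $\Gamma(2\alpha+1)x^{-1-2\alpha}/(2\pi)$, and then integrates over $(x,\infty)$, reaching the same constant.
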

\begin{proof}
The integrable characteristic function gives the density
\[
 f_U(x)=\pi^{-1}\operatorname{Re}\int_0^\infty
                  e^{ixt-t^\alpha}\dd t.
\]
For $x>0$, rotate the contour into the first quadrant, using the
principal branch of $t^\alpha$. The small arc vanishes with its radius.
On the large arc,
$|e^{ixt-t^\alpha}|=e^{-x|t|\sin\theta-|t|^\alpha\cos(\alpha\theta)}$
and $\cos(\alpha\theta)\ge\cos(\pi\alpha/2)>0$. Hence
\[
 f_U(x)=-\pi^{-1}\operatorname{Im}\int_0^\infty
       e^{-xy}\exp\{-e^{i\pi\alpha/2}y^\alpha\}\dd y.
\]
For $z\in\C$ with $\operatorname{Re}z\ge0$,
\begin{equation}\label{clk:exponential-remainder}
 e^{-z}-1+z=z^2\int_0^1(1-u)e^{-uz}\dd u,\qquad |e^{-z}-1+z|
 \le|z|^2\int_0^1(1-u)e^{-u\operatorname{Re}z}\dd u
 \le|z|^2/2.
\end{equation}
Using $z=e^{i\pi\alpha/2}y^\alpha$ and
$\int_0^\infty e^{-xy}y^a\dd y=\Gamma(a+1)x^{-a-1}$ gives
\begin{equation}\label{clk:stable-density-bound}
 \begin{aligned}
 \left|f_U(x)-\frac{\Gamma(\alpha+1)\sin(\pi\alpha/2)}\pi
                         x^{-1-\alpha}\right| &\le\frac1\pi\int_0^\infty e^{-xy}
 \left|e^{-e^{i\pi\alpha/2}y^\alpha}-1+e^{i\pi\alpha/2}y^\alpha\right|\dd y\\
 &\overset{\eqref{clk:exponential-remainder}}{\le}
 \frac1{2\pi}\int_0^\infty e^{-xy}y^{2\alpha}\dd y
 =\frac{\Gamma(2\alpha+1)}{2\pi}x^{-1-2\alpha}.
 \end{aligned}
\end{equation}
Integrate from $x$ to infinity and use symmetry:
\[
 \begin{aligned}
 \left|2\int_x^\infty f_U(y)\dd y
       -\frac{2\Gamma(\alpha+1)\sin(\pi\alpha/2)}{\pi\alpha}x^{-\alpha}\right|
 &\overset{\eqref{clk:stable-density-bound}}{\le}
 \frac{\Gamma(2\alpha+1)}\pi\int_x^\infty y^{-1-2\alpha}\dd y\\
 &=\frac{\Gamma(2\alpha+1)}{2\pi\alpha}x^{-2\alpha}
 =\frac{\Gamma(2\alpha)}\pi x^{-2\alpha}.
 \end{aligned}
\]
The identity $\Gamma(a+1)=a\Gamma(a)$ proves
\eqref{clk:stable-tail-bound}.
\end{proof}
\begin{proposition}[Identical microscopic laws, different path limits]
\label{clk:same-law}
Fix $T>0$, $0<\alpha<1$, $\lambda>0$, $0<|\varepsilon|<1$, and $s>1/2$.
Let $(Y_i)_{i\ge1}$ be independent on $\mathbf T$ with common law
$\nu(\dd x):=(1+\varepsilon\cos(2\pi x))\dd x$, and set
$\nu^N:=N^{-1}\sum_{i=1}^N\delta_{Y_i}$.
There are rank and grid couplings, denoted by $\mathrm r$ and $\mathrm g$,
in which $L^N$ is a compound Poisson process of rate $N\lambda$ with
marks $N^{-1/\alpha}U$, where $U$ is as in Lemma~\ref{clk:stable-tail},
independent of $(Y_i)_{i\ge1}$.
In each construction, let $L_t:=S_{\lambda t}$ for a standard symmetric
$\alpha$-stable process and set
$\mu_t^N:=\tau_{L_t^N}\nu^N$, $\mu_t:=\tau_{L_t}\nu$.
For each fixed $N$, the particle systems $(Y_i+L^N)_{i=1}^N$ have
identical laws. Their joint laws with $L$, and their joint realizations
across $N$, depend on the coupling.
In the rank coupling,
\begin{equation}\label{clk:same-law-rank}
 \sqrt N(\mu^{N,\mathrm r}-\mu^{\mathrm r})
 \xRightarrow[N\to\infty]{}\Gamma,\qquad
 \Gamma_t:=\tau_{L_t}\mathcal B,
 \quad\text{in }D([0,T];H^{-s}(\mathbf T)),
\end{equation}
where $\mathcal B$ is the $\nu$-Brownian bridge of
\eqref{rk:bridge-covariance}, independent of $S$.
In the grid coupling $L_t^{N,\mathrm g}:=S_{K_t^N/N}$, where $K^N$ is a
rate-$N\lambda$ Poisson process independent of $S$, the same limit holds
at every finite collection of deterministic times. Moreover,
\begin{equation}\label{clk:same-law-grid-center}
 \sqrt N(\mu^{N,\mathrm g}-\tau_{L^{N,\mathrm g}}\nu)
       \xRightarrow[N\to\infty]{}\Gamma
       \quad\text{in }D([0,T];H^{-s}(\mathbf T)).
\end{equation}
All these limits are stable relative to $\sigma(S)$ in their respective
constructions. Nevertheless, for every deterministic $a_N\to\infty$,
\begin{equation}\label{clk:same-law-obstruction}
 \sup_{t\le T}a_N\|\mu_t^{N,\mathrm g}-\mu_t^{\mathrm g}\|_{H^{-s}}
                 \longrightarrow\infty\quad\text{almost surely}.
\end{equation}
Thus the grid-coupled differences normalized by $a_N$ are not
$J_1$-tight at the limiting center.
\end{proposition}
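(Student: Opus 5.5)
We will construct both couplings explicitly and then reduce each conclusion to an earlier result. For the rank coupling, take the Poisson measure $\Pi$ of Section~\ref{rk:trichotomy-section} with $c:=c_\alpha$ from Lemma~\ref{clk:stable-tail}. Set $L^{N,\mathrm r}_t:=J^N_{\lambda t}$ and $L^{\mathrm r}_t:=S_{\lambda t}$ with $S=I(g)$; this time change is realized by restricting $\Pi$ to $(0,\lambda t]$. After the usual normalization of the stable constant (here $\log\Ee[e^{iuS_1}]=-|u|^\alpha$, which fixes $A_\alpha=1$), $S$ is a standard symmetric $\alpha$-stable process. Then $J^N$ is compound Poisson of rate $N$ with marks $N^{-1/\alpha}U$, up to zero marks. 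Zero marks occur only when $Q(y/N)=0$, that is $y\ge N$, and we have already discarded those ranks. For the grid coupling, $L^{N,\mathrm g}_t=S_{K^N_t/N}=N^{-1/\alpha}\sum_{k\le K^N_t}U^N_k$ with the i.i.d.\ $U^N_k$ of \eqref{clk:signals}. This is again compound Poisson of rate $N\lambda$ with marks $N^{-1/\alpha}U$, independent of the $Y_i$. Hence, for each fixed $N$, the particle systems $(Y_i+L^N)_{i\le N}$ have identical laws.

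\emph{Rank coupling.} Lemma~\ref{clk:stable-tail} gives \eqref{rk:strong-hall} with $\rho=\alpha$, $d$ explicit, and any $\epsilon\le\alpha$. The stable law has a positive continuous density, so its tail is continuous and strictly decreasing, and Lemma~\ref{rk:hall-sufficient} applies. Since $\rho=\alpha>\alpha/2$, Theorem~\ref{rk:tangent} gives $\|J^N-S\|_\infty=O_{\Pp}(b_N)=o_{\Pp}(N^{-1/2})$. Moreover $\nu$ has a smooth density. We can therefore follow the supercritical case of the proof of Corollary~\ref{rk:sharpness}, using the decomposition \eqref{rk:exact-decomposition}, the Taylor bound \eqref{rk:translation-taylor} and the bridge estimate \eqref{rk:bridge-translation-error}. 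These give $\sqrt N(\mu^{N,\mathrm r}-\mu^{\mathrm r})=\tau_L\mathcal B_N+o_{\Pp}(1)$ uniformly in $t$. We do not need $d\ne0$ here; if $d=0$ the rate improves. Applying the continuous map $b\mapsto\tau_Lb$ for fixed $L$, and using that $\mathcal B_N$ is independent of $S$, we obtain \eqref{clk:same-law-rank} stably. One check is needed: the bound $s>1/2$ suffices, because only the $H^{-s}$ norms of $\partial\nu$ and $\partial^2\nu$ enter, and these are finite for every $s$.

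\emph{Grid coupling.} The identity $\sqrt N(\mu^{N,\mathrm g}_t-\tau_{L^{N,\mathrm g}_t}\nu)=\tau_{L^{N,\mathrm g}_t}\mathcal B_N$ reduces \eqref{clk:same-law-grid-center} to two facts. First, $L^{N,\mathrm g}\to L$ in $J_1$ in probability. This follows exactly as in the proof of Theorem~\ref{clk:finite-center}, with clock $T^N=K^N/N$ and $A_t=\lambda t$; now $\|K^N/N-\lambda\cdot\|_\infty=O_{\Pp}(N^{-1/2})$ by Doob's inequality. Second, the translated bridge must converge along the same time changes. This uses the compact-uniform translation modulus \eqref{rk:translation-compact} together with tightness of $\mathcal B_N\Rightarrow\mathcal B$. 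At fixed deterministic times, decompose as in the proof of Corollary~\ref{clk:regimes}. Proposition~\ref{clk:tangent} applies with the clock fluctuation $H$ Gaussian of variance $\lambda t$, and gives $L^{N,\mathrm g}_t-L_t=O_{\Pp}(N^{-1/(2\alpha)})=o_{\Pp}(N^{-1/2})$, since $\alpha<1$. A first-order translation bound then shows that the center error is $o_{\Pp}(N^{-1/2})$, and the fixed-time limit $(\tau_{L_{t_j}}\mathcal B)_j$ follows.

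\emph{Obstruction.} This is the step I expect to need the most care. For \eqref{clk:same-law-obstruction} we reuse the argument of Proposition~\ref{clk:no-path}. The Poisson event times of $K^N$, taken over all $N$, form a countable set independent of $S$. Almost surely, therefore, some jump of $L$ at a time $t\in(0,T)$ with size $z\ne0$ is not a jump time of any $L^{N,\mathrm g}$. At that time $\Delta(\mu^{N,\mathrm g}-\mu^{\mathrm g})_t=-\tau_{L_{t-}}(\tau_z\nu-\nu)$. On the torus we must additionally check that $\tau_z\nu\ne\nu$. Because $\varepsilon\ne0$ and $\nu$ is not translation invariant, this holds unless $z\in\Z$; the first Fourier coefficient is multiplied by $e^{2\pi iz}$. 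Jump sizes of $S$ have a continuous Lévy density, so almost surely no jump is an integer, and we may choose a jump of $L$ in $(0,T)$ whose size is not an integer. The bound \eqref{clk:path-jump-bound} then gives $\sup_t\|\mu^{N,\mathrm g}_t-\mu^{\mathrm g}_t\|_{H^{-s}}\ge c>0$ for all $N$. Multiplying by $a_N\to\infty$ proves \eqref{clk:same-law-obstruction}. Non-tightness follows because $J_1$-compact sets have bounded ranges.
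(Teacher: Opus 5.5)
\textbf{Verdict.} Your proposal follows the paper's proof essentially step by step, with one step that is not justified as written. The shared steps are:
\begin{itemize}
\item the rank construction $L^{N,\mathrm r}_t=J^N_{\lambda t}$ with $c=c_\alpha$;
\item the decomposition \eqref{clk:translation-decomposition} with a first-order translation bound;
\item grid alignment as in Theorem~\ref{clk:finite-center} for \eqref{clk:same-law-grid-center};
\item increment removal as in Proposition~\ref{clk:tangent} at fixed times;
\item the jump mismatch at a non-integer stable jump for \eqref{clk:same-law-obstruction}.
\end{itemize}

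\textbf{The misattributed step.} Lemma~\ref{clk:stable-tail} gives only $\Pp(|U|>x)=c_\alpha x^{-\alpha}\{1+O(x^{-\alpha})\}$. That is \eqref{rk:tail-bound} with $\rho=\alpha$. It does not identify a coefficient $d$ with $\Pp(|U|>x)=c_\alpha x^{-\alpha}\{1+dx^{-\alpha}+O(x^{-\alpha-\epsilon})\}$, which \eqref{rk:strong-hall} with $\rho=\alpha$ requires. Without such a $d$, Lemma~\ref{rk:hall-sufficient} cannot be invoked at $\rho=\alpha$. Assumption~\ref{rk:inverse} there, namely uniform convergence of $r_N^{-1}(g_N-g)$ to $h$ on compacts, is therefore not established. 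So Theorem~\ref{rk:tangent} cannot be cited as you do.

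Such a $d$ does exist, via the classical convergent series of the symmetric stable density for $\alpha<1$, but you do not need it. Either of the following repairs works:
\begin{itemize}
\item \emph{The paper's route.} Lemma~\ref{rk:localized} needs only \eqref{rk:tail-bound}. Localization plus Doob's inequality, as in the proof of Theorem~\ref{rk:transfer}, then gives $\|L^{N,\mathrm r}-L^{\mathrm r}\|_\infty=O_{\Pp}(b_N)=o_{\Pp}(N^{-1/2})$ directly.
\item \emph{Your route, rephrased.} Read Lemma~\ref{clk:stable-tail} as \eqref{rk:strong-hall} with $d=0$, any $\rho'\in(\alpha/2,\alpha)$, and $\epsilon=\alpha-\rho'$. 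Lemma~\ref{rk:hall-sufficient} and Theorem~\ref{rk:tangent} then give $\|J^N-S\|_\infty=O_{\Pp}(b_N)$. This is $o_{\Pp}(N^{-1/2})$ in every regime, because $\rho'>\alpha/2$ and $\alpha<1$.
\end{itemize}

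\textbf{An unchecked normalization.} You assert, rather than check, that $c=c_\alpha$ makes $S=I(g)$ a standard stable process, i.e.\ $A_\alpha=1$. This is true and standard, but it carries content: the same constant must be both the tail constant of $U$, so that $g_N-g$ is small, and the L\'evy tail constant of the standard limit. The paper verifies it by integrating by parts and using $c_\alpha\Gamma(1-\alpha)\cos(\pi\alpha/2)=1$ from the reflection formula.
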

\begin{proof}
Use \eqref{rk:maps} with $c=c_\alpha$, and define
$L_t^{N,\mathrm r}:=J^N_{\lambda t}$ and $L_t^{\mathrm r}:=S_{\lambda t}$.
Integration by parts gives
\begin{equation}\label{clk:stable-cosine-integral}
 \alpha\int_0^\infty(1-\cos(uz))z^{-1-\alpha}\dd z
 =|u|^\alpha\int_0^\infty w^{-\alpha}\sin w\dd w =\Gamma(1-\alpha)\cos(\pi\alpha/2)|u|^\alpha.
\end{equation}
The Gamma reflection identity yields
\[
 c_\alpha\Gamma(1-\alpha)\cos(\pi\alpha/2)
 \overset{\eqref{clk:stable-tail-bound}}{=}
 \frac{\Gamma(\alpha)\Gamma(1-\alpha)\sin(\pi\alpha)}\pi =1.
\]
Together with \eqref{clk:stable-cosine-integral}, this gives the
required stable normalization of the rank-master signal.
Lemma~\ref{clk:stable-tail} implies \eqref{rk:tail-bound} with
$\rho=\alpha>\alpha/2$. Localization and Doob's inequality, as in
Lemma~\ref{rk:localized} and Theorem~\ref{rk:transfer}, give
\begin{equation}\label{clk:rank-signal-error}
 \|L^{N,\mathrm r}-L^{\mathrm r}\|_\infty
             =O_{\Pp}(b_N)
             \overset{\eqref{rk:rate}}{=}o_{\Pp}(N^{-1/2}),
\end{equation}
with $b_N$ defined in \eqref{rk:rate}.
The Hilbert-space CLT gives
$\mathcal B_N:=\sqrt N(\nu^N-\nu)\weak\mathcal B$, independently of the
master. The decomposition
\begin{equation}\label{clk:translation-decomposition}
 \sqrt N(\mu^N-\mu)
 =\tau_{L^N}\mathcal B_N+\sqrt N(\tau_{L^N}-\tau_L)\nu
\end{equation}
has a second term satisfying
\[
 \sup_{t\le T}\|\sqrt N(\tau_{L_t^N}-\tau_{L_t})\nu\|_{H^{-s}}
 \le\sqrt N\|L^N-L\|_\infty\|\partial\nu\|_{H^{-s}} \overset{\eqref{clk:rank-signal-error}}{=}o_{\Pp}(1).
\]
The first converges by \eqref{rk:translation-compact}. This proves
\eqref{clk:same-law-rank}, stably with $S$.
For the grid coupling, the marks are independent of $K^N$ and the sample,
and
\[
 \Ee\left[e^{iuN^{1/\alpha}(S_{k/N}-S_{(k-1)/N})}\right]
       =e^{-N^{-1}|N^{1/\alpha}u|^\alpha}=e^{-|u|^\alpha}.
\]
Independent increments prove equality of microscopic laws.
The Poisson CLT gives
$\sqrt N(K^N/N-\lambda\operatorname{id})\weak(B_{\lambda t})_{t\le T}$,
independently of $S$, where $B$ is standard Brownian motion.
The argument that removes increments in Proposition~\ref{clk:tangent} gives, jointly
at finitely many deterministic times and stably with $S$,
\[
 L_t^{N,\mathrm g}-L_t^{\mathrm g}
        =O_{\Pp}(N^{-1/(2\alpha)})=o_{\Pp}(N^{-1/2}).
\]
Substitution into \eqref{clk:translation-decomposition} proves the
finite-time CLT.
Grid alignment and \eqref{clk:small-jump-bound} give
$L^{N,\mathrm g}\to L^{\mathrm g}$ in probability in $J_1$.
Combining this with the independent bridge CLT and
\eqref{rk:translation-compact} proves \eqref{clk:same-law-grid-center},
stably with $S$.
Almost surely, no jump of $L$ is a Poisson event for any $N$.
The stable L\'evy measure has infinite mass on noninteger jump sizes,
so there is a jump $z\notin\Z$ on $(0,\lambda T)$ almost surely.
At its physical time $t$, for every $N$,
\[
 \Delta(\mu^{N,\mathrm g}-\mu^{\mathrm g})_t
          =-\tau_{L_{t-}}(\tau_z\nu-\nu).
\]
Since $\widehat\nu(1)=\varepsilon/2\ne0$, translation invariance would
imply $e^{2\pi iz}=1$, contradicting $z\notin\Z$.
Thus
\[
 \sup_{u\le T}\|\mu_u^{N,\mathrm g}-\mu_u^{\mathrm g}\|_{H^{-s}}
 \overset{\eqref{clk:path-jump-bound}}{\ge}
 \tfrac12\|\Delta(\mu^{N,\mathrm g}-\mu^{\mathrm g})_t\|_{H^{-s}} =\tfrac12\|\tau_z\nu-\nu\|_{H^{-s}}>0
\]
for every $N$. This proves \eqref{clk:same-law-obstruction} and excludes
$J_1$ tightness.
\end{proof}

\end{document}